\numberwithin{equation}{section} \DeclareMathSizes{2}{10}{12}{13}
\newtheorem{thm}{Proposition}[section]
\newtheorem{Thm}[thm]{Theorem}
\newtheorem{rem}[thm]{Remark}
\newtheorem{cor}[thm]{Corollary}
\newtheorem{lem}[thm]{Lemma}
\newtheorem{defn}[thm]{Definition}
\newtheorem{ex}[thm]{Examples}
\title{Closure operations, Continuous valuations on monoids and Spectral spaces}
\author{Samarpita Ray\footnote{Department of Mathematics, Indian Institute of Science, Bangalore, India.\\ email: ray.samarpita31@gmail.com.\\
Keywords: abelian monoids; Spectral space and spectral map; Zariski, hull-kernel, patch,
inverse and ultrafilter topologies; closure operation; continuous valuation.\\
MSC(2010): 13A15, 13A18, 13B22, 54D80\\
}}
\date{}
\begin{document}
\maketitle

\begin{abstract}
We present several naturally occurring classes of spectral spaces using commutative algebra on pointed monoids. For this purpose, our main tools are finite type closure operations and continuous valuations on monoids which we introduce in this work.  In the process, we make a detailed study of different closure operations on monoids. We prove that the collection of continuous valuations on a topological monoid with topology determined by any finitely generated ideal is a spectral space.
\end{abstract}

\section{Introduction}
In basic commutative algebra, topological spaces arise essentially in two contexts. First, as the Zariski spectrum  $Spec(A)$ of a commutative ring $A$ and secondly while topologizing a ring with the $I$-adic topology, where $I$ is an ideal of the ring (see, for 
instance, \cite{AM}). A celebrated result by Hochster (\cite{H}) shows that  a topological space $X$ is homeomorphic to $Spec(A)$ for some commutative ring $A$ if and
only if $X$ has the following properties :\smallskip
\newline(1) $X$ is $T_0$,\\
(2) $X$ is quasi-compact (i.e., any open cover of $X$ admits a finite
subcover), \\
(3) $X$ admits a basis of quasi-compact open subspaces that is closed under
finite intersections\\
(4) and every irreducible closed subspace $Y$ of $X$ has a unique
generic point.\\ The striking thing about the above description is the fact that it is purely topological
in nature. 
Such a topological space $X$ is called {\it spectral}. Unlike the Zariski spectrum, the relation between the $I$-adic topology and spectral spaces is not so apparent. However, one of the most important tools to understand the $I$-adic topology is through valuations (see, for instance, \cite{B}) and several spectral spaces naturally emerge on studying valuations. For example, given any field $F$, the set of all valuation rings
having the quotient field $F$ is a spectral space (see, for instance, \cite[Example 2.2 (8)]{O}). The valuation spectrum of any ring is a spectral space \cite{Hu}. Moreover, for any ring $A$ with the $I$-adic topology having a finitely generated ideal of definition, the space of continuous valuations $Cont(A)$ forms a spectral space \cite{Hu}. On the one hand, the Zariski spectra are the building blocks of schemes in algebraic geometry and on the other, the space $Cont(A)$ is the starting point of non-archimedean geometry. This shows that spectral spaces help in linking algebra, topology and geometry.\smallskip
\newline Recently, Finocchiaro \cite{F} developed a new criterion involving {\it ultrafilters} to characterize spectral spaces. This helped them to show that certain familiar objects appearing in commutative algebra can be realized as the spectrum of a ring (see, for instance, \cite{Sp}, \cite{Fo}). For example, Finocchiaro, Fontana and Spirito proved that the collection of submodules of a module over a ring has the structure of a spectral space \cite{Fo}. They further used closure operations from classical commutative ring theory to bring more such spectral spaces in light. Moreover, the methods of \cite{Fo} were used in \cite{AB1}, \cite{AB2} to uncover many spectral spaces coming from objects in an abelian
category and from modules over tensor triangulated categories. In this paper, we use both Hochster's characterization and Finocchiaro's criterion to present natural classes of spectral spaces involving pointed monoids.\smallskip
\newline Our interest in monoids began by looking at the paper \cite{FW} by Weibel and Flores which studies certain geometric structures involving monoids. This interest in the geometry over monoids lies in its natural association to toric geometry, which was pointed out in the work of Cortiñas, Haesemeyer, Walker and Weibel \cite{CHWW}. A detailed work on the commutative and homological algebra on monoids was presented by Flores in \cite{JF}. As such, in this work we look into the topological aspects of monoids via spectral spaces. We begin by showing that the collection of all prime ideals of a monoid or, in other words, the spectrum of a monoid, endowed with the Zariski topology is homeomorphic to the spectrum of a ring, i.e., it is a spectral space. We further prove that the collection of all ideals as well as the collection of all proper ideals of a monoid are also spectral spaces (Corollary  \ref{allideal}). As in \cite{JF}, the notion of $A$-sets over a monoid $A$ is the analogue of the notion of modules over a ring. We introduce closure operations on monoids and obtain natural classes of spectral spaces using finite type closure operations on $A$-sets. In the process, different notions of closure operations like integral, saturation, Frobenius and tight closures are introduced for monoids inspired by the corresponding closure operations on rings from classical commutative algebra. We discuss their persistence and localization properties in detail. Other than rings, valuation has also been studied on ring-like objects like semirings (see, for instance \cite{JJ}, \cite{NP}). Valuation monoids were studied in \cite{CHWW}. In this work, we prove that the collection of all valuation monoids having the same group completion forms a spectral space (Proposition \ref{vmsp}) and that the valuation spectrum of any monoid gives a spectral space (Theorem \ref{4.23}). Finally, we prove that the collection of continuous valuations on a topological monoid whose topology is determined by any finitely generated ideal (in the sense of Definition \ref{I-top}) also gives a spectral space (Theorem \ref{Contval}).
\smallskip
\newline The paper is organized as follows. In $\S$2, we recall some background materials on commutative algebra on monoids and spectral spaces. In $\S$3, we obtain classes of spectral spaces using $A$-sets over a monoid $A$ and in particular using ideals of a monoid $A$. For instance, we prove that the collection of $A$-subsets of an $A$-set forms a spectral space in a manner similar to \cite{Fo}. Following this, in $\S$4, we introduce different closure operations on monoids and study their persistence and localization properties as well. This is done with a view towards obtaining spectral spaces involving finite type closure operations. Finally, in $\S$5, we study valuations and continuous valuations on monoids and several classes of spectral spaces appear.  \\
\\
{\bf Acknowledgements:} I am grateful to my Ph.D advisor Professor Abhishek Banerjee for many helpful discussions on this work. I would like to thank Professor Sudesh Kaur Khanduja and Professor Pooja Singla for their helpful comments and suggestions. I would also like to express my gratitude to the anonymous referee for a very careful reading of the paper and for several useful comments and suggestions.

\section{Background}
\subsection{Monoids}
Throughout this work, by a monoid we shall always mean a commutative pointed (unique basepoint 0) monoid with identity element 1. A morphism of monoids is a basepoint preserving map defined in the obvious way. Let us now briefly recall the commutative algebra on monoids from \cite{JF}.\smallskip
\newline 
A monoid $A$ is called {\it cancellative} if for all $a,b,c \in A$ with $a \neq 0$, $ ab = ac$ implies $b=c$.
 An {\it ideal} $I$ of a monoid $A$ is a pointed subset such that $IA \subseteq A$. An ideal $I \subseteq A$ is generated by a subset $Y$ of $I$ if every $x \in I$ can be written as $ay$ for some $a \in A$ and some $y \in Y$.  If this $Y$ can be chosen to be finite, $I$ is called a {\it finitely generated} ideal.
 An ideal $\mathfrak{p} \neq A$ is called {\it prime} if the complement $A\setminus \mathfrak{p}$ is closed under multiplication, or equivalently if $A/\mathfrak{p}$ is torsion free (see, for instance, \cite[$\S$ 2.1.3]{JF}).
The complement of the set of units $A^{\times}$ of a monoid $A$ forms an ideal and is therefore the unique {\it maximal} ideal of $A$, written as $m_A$. {\it  In this sense, every monoid is local}. A morphism $f: A \longrightarrow B$ is {\it local} if $f(m_A) \subseteq m_B$.\smallskip
\newline Let $I \subseteq A$ be an ideal. The subset $I \times \{0\} \subseteq A \times A$ generates a congruence, i.e., an equivalence relation compatible with the monoid operation, whose associated quotient monoid is written as $A/I$ and identifies all elements of $I$ with
$0$ leaving $A\setminus I$ untouched. 
\newline An $A$-set is a commutative pointed set $M$ with a binary operation $\cdot: A \times M \longrightarrow M$ satisfying, $(i)$ $1.x = x$, $(ii)$ $0_A.x=0_X\ ;\ a.0_X = 0_A$ and $(iii)$ $(a.b).x = a.(b.x)$ for every $a,b \in A$ and $x \in M$, where $0_A$ denotes the basepoint of $A$ and $0_M$ denotes the basepoint of $M$. Morphisms of $A$-sets are defined in the obvious way.
 A subset $N$ of an $A$-set $M$ is called an $A$-{\it subset} of $M$ if $ay \in N$ for every $a \in A$ and $y \in N$.
An $A$-set $M$ is said to be {\it finitely generated} if there exists a finite subset $S\subseteq M$ such that every $x \in M$ can be written as $x = az$ for some $a \in A$ and $z \in S$. An $A$-set is {\it Noetherian} if every $A$-subset is finitely generated.\smallskip
\newline Localization of a monoid $A$ or of an $A$-set are defined in the obvious way. When $S$ is the multiplicatively closed subset of non-zero divisors of $A$, the localization  $S^{-1}A$ is called the {\it total monoid of fractions}. When $A$ is torsion free, i.e., 0 generates a prime ideal, the localization $A^+:= A_{(0)}$ of $A$ at $(0)$ is called the {\it group completion} of $A$. Note that it is $A^+ \setminus 0$ which is a group but not $A^+$. The canonical map $A \rightarrow A^{+}$ is an inclusion only when $A$ is cancellative.


\subsection{Spectral spaces}
A topological space $X$ is called a spectral space (after Hochster \cite{H}) if it is homeomorphic to the spectrum $Spec (R)$ of a ring $R$ with the Zariski topology. As stated in the introduction, spectral spaces can be described in purely topological terms. A map $f:X \longrightarrow Y$ of spectral spaces is called
a {\it spectral map} if for any open and quasi-compact subspace $Y'$ of $Y$, the set $f^{-1}(Y')$ is open and quasi-compact. In particular, any spectral map of spectral
spaces is continuous.\smallskip
\newline A spectral space $X$ need not be Hausdorff unless $X$ is zero dimensional. However, there is a natural way to refine the topology of
$X$ in order to make $X$ an Hausdorff space without losing compactess. The {\it patch topology} (or {\it constructible topology}) on $X$ is the topology which has as a sub-basis for its closed sets the closed sets and quasi-compact open sets of the original space, or in other words, which has the quasi-compact open sets and their complements as an open sub-basis. Endowing $X$ with the patch topology makes it a Hausdorff spectral space (see, \cite[$\S$ 2]{H}) and it is denoted as $X_{patch}$. By a {\it patch} in $X$ we mean a subset of $X$ closed in $X_{patch}$. If $Y$ is a patch in a spectral space $X$, then $Y$ is also a spectral space with the subspace topology (see, \cite[Proposition 9]{H} or \cite[Proposition 2.1]{O} ). We shall use this property in Section \ref{sec 4} to show certain topological spaces are spectral.
\smallskip
\newline Another way of understanding spectral spaces is via {\it ultrafilters}. Let us recall the definition of ultrafilters (say from \cite{F}) before we state the criterion. 
\begin{defn}\label{filter}
A nonempty collection $\mathcal{F}$ of subsets
of a given set $X$ is called a filter on $X$ if the following properties hold:\\
$(i)$ $\emptyset \notin \mathcal{F}$.\\
$(ii)$ If $Y,Z \in \mathcal{F}$, then $Y\cap Z \in \mathcal{F}$.\\
$(iii)$ If $Z \in \mathcal{F}$ and $Z\subset Y \subset X$, then $Y \in \mathcal{F}$.\\
A filter $\mathcal{F}$ on $X$ is an ultrafilter if and only if it is maximal. Equivalently, a filter $\mathcal{F}$ is an ultrafilter on $X$ if for each subset $Y$ of $X$, either $Y \in \mathcal{F}$ or $X \setminus Y \in \mathcal{F}$. We shall denote an ultrafilter by $\mathcal{U}$.
\end{defn}
For further details and examples of filters, see, for instance, \cite{Je}. 
\begin{Thm}\label{2.2}
(see,\cite[Corollary 3.3]{F})
Let $X$ be a topological space. \\
(1) The following conditions are equivalent:\smallskip
\newline $(i)$ $X$ is a spectral space.\\
$(ii)$ $X$ satisfies the $T_0$-axiom and there is a subbasis $\mathbb{S}$ of $X$ such that 
\begin{equation*}
X_{\mathbb{S}}({\mathcal{U}}):= \{x \in X\ |\ [\forall S\in \mathbb{S}, x \in S \iff S \in \mathcal{U}]\} \neq \emptyset
\end{equation*}
for any ultrafilter $\mathcal{U}$ on $X$.\\
(2) If the previous equivalent conditions hold and $\mathbb{S}$ is as in $(ii)$, then a subset
$Y$ of $X$ is closed, with respect to the patch topology, if and only if
$Y_{\mathbb{S}}({\mathcal{V}}):= \{x \in X\ |\ [\forall S\in \mathbb{S}, x \in S \iff S \cap Y \in \mathcal{V}]\}$ is a subset of $Y$ for any ultrafilter $\mathcal{V}$ on $X$.
\end{Thm}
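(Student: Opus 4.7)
The plan is to derive the theorem from Hochster's characterization, with the ultrafilter condition serving as an alternative encoding of patch-compactness. The core observation is that condition (ii) says precisely that every ultrafilter on $X$ has a cluster point in $X$ compatible with the subbasis $\mathbb{S}$, which is the ultrafilter-theoretic version of quasi-compactness of the patch topology, together with a unique-limit condition supplied by $T_0$.

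For $(i)\Rightarrow(ii)$, I would take $\mathbb{S}$ to be the collection of quasi-compact open subsets of $X$; each such $S$ is clopen in $X_{patch}$. Given an ultrafilter $\mathcal{U}$ on $X$, the family
\[
\mathcal{C} := \{\,S : S\in\mathcal{U}\cap\mathbb{S}\,\} \cup \{\,X\setminus S : S\in\mathbb{S}\setminus\mathcal{U}\,\}
\]
consists of patch-closed sets with the finite intersection property (by the ultrafilter axioms, every finite intersection from $\mathcal{C}$ lies in $\mathcal{U}$ and is therefore nonempty). Quasi-compactness of $X_{patch}$ produces a point $x\in\bigcap\mathcal{C}$, and such a point lies in $X_{\mathbb{S}}(\mathcal{U})$ by construction.

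For the converse $(ii)\Rightarrow(i)$, I would verify the four Hochster axioms. Quasi-compactness of $X$ is the ultrafilter characterization of compactness: any ultrafilter $\mathcal{U}$ on $X$ has a cluster point supplied by (ii). A parallel argument applied to ultrafilters supported in a fixed $S\in\mathbb{S}$ (extended to $X$ in the usual way) shows each subbasic $S$ is itself quasi-compact; closing $\mathbb{S}$ under finite intersection then supplies the required quasi-compact open basis. The subtle axiom is the existence of a generic point for an irreducible closed $C\subseteq X$: I would build a filter on $X$ generated by sets of the form $S\cap C$ for subbasic $S$ meeting $C$, extend to an ultrafilter $\mathcal{U}$, use (ii) to obtain $x\in X_{\mathbb{S}}(\mathcal{U})$, and then use irreducibility together with the subbasic compatibility $x\in S\Leftrightarrow S\in\mathcal{U}$ to show $\overline{\{x\}}=C$.

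Part (2) follows by combining (1) with standard facts about the patch topology. If $Y$ is patch-closed then $Y$ is itself spectral with induced subbasis $\{S\cap Y\}_{S\in\mathbb{S}}$, and matching each ultrafilter $\mathcal{V}$ on $X$ containing $Y$ with the ultrafilter it induces on $Y$, the condition on $Y_{\mathbb{S}}(\mathcal{V})$ is precisely the ultrafilter criterion of (1) applied to $Y$ (the case $X\setminus Y\in\mathcal{V}$ being vacuous). Conversely, the containment $Y_{\mathbb{S}}(\mathcal{V})\subseteq Y$ for all $\mathcal{V}$ prevents any patch-limit point of $Y$ from lying outside $Y$, so $Y$ is patch-closed. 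I expect the main obstacle throughout to be the generic-point axiom in $(ii)\Rightarrow(i)$; the remaining pieces reduce to careful ultrafilter bookkeeping and compactness of $X_{patch}$.
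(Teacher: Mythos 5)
The paper does not prove Theorem \ref{2.2} at all; it is quoted verbatim from Finocchiaro \cite[Corollary 3.3]{F}, so there is no in-paper argument to compare yours against. Judged on its own, your treatment of Part (1) is essentially correct and is the standard route: $(i)\Rightarrow(ii)$ via patch-compactness applied to the family $\mathcal{C}$, and $(ii)\Rightarrow(i)$ by verifying Hochster's axioms, with the ultrafilter point from (ii) serving as a convergence/cluster point. One caution on phrasing: ``closing $\mathbb{S}$ under finite intersection then supplies the required quasi-compact open basis'' should not be read as saying finite intersections of quasi-compact opens are automatically quasi-compact (that is the coherence axiom you are proving); rather, the same ultrafilter argument must be rerun for each finite intersection $S_1\cap\cdots\cap S_k$. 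Your sobriety sketch is sound: irreducibility gives the finite intersection property for $\{S\cap C : S\in\mathbb{S},\ S\cap C\neq\emptyset\}$, and the point $x\in X_{\mathbb{S}}(\mathcal{U})$ can be shown to lie in $C$ and to have $C\subseteq\overline{\{x\}}$ by unwinding the defining equivalence.

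Part (2) has a genuine gap in the forward direction. Applying the criterion of (1) to the patch-closed (hence spectral) subspace $Y$ with subbasis $\{S\cap Y\}_{S\in\mathbb{S}}$ yields only that $Y_{\mathbb{S}}(\mathcal{V})\cap Y\neq\emptyset$ when $Y\in\mathcal{V}$; it does \emph{not} yield the required containment $Y_{\mathbb{S}}(\mathcal{V})\subseteq Y$. These are different statements, and the first does not imply the second. The correct argument is direct: if $x\in Y_{\mathbb{S}}(\mathcal{V})$ but $x\notin Y$, choose a basic patch-open $W=S_1\cap\cdots\cap S_k\cap(X\setminus T_1)\cap\cdots\cap(X\setminus T_m)$ with $x\in W\subseteq X\setminus Y$ and $S_i,T_j\in\mathbb{S}$; the defining equivalence forces $S_i\cap Y\in\mathcal{V}$ and $T_j\cap Y\notin\mathcal{V}$, so $\bigl(\bigcap_i S_i\cap Y\bigr)\cap\bigcap_j\bigl(X\setminus(T_j\cap Y)\bigr)\in\mathcal{V}$ is nonempty, and any point of it lies in $W\cap Y=\emptyset$, a contradiction. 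Separately, your claim that the case $X\setminus Y\in\mathcal{V}$ is ``vacuous'' requires justification: when $Y\notin\mathcal{V}$ one computes $Y_{\mathbb{S}}(\mathcal{V})=X\setminus\bigcup\mathbb{S}$, so vacuity holds only when $\mathbb{S}$ covers $X$. This is true for your choice $\mathbb{S}=$ all quasi-compact opens (since $X\in\mathbb{S}$) but is not an automatic consequence of (ii). The converse direction of Part (2) — building an ultrafilter from the trace of patch-neighborhoods of a hypothetical $x\in\overline{Y}^{\,\mathrm{patch}}\setminus Y$ and reading off $x\in Y_{\mathbb{S}}(\mathcal{V})$ — is correct.
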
 
\begin{cor}\label{2.3}
(see,\cite[Corollary 1.2]{Fo}) Let $X$ be a topological space satisfying the equivalent conditions of Theorem  \ref{2.2} and let $\mathbb{S}$ be as in Theorem \ref{2.2}$(ii)$. Then $\mathbb{S}$ is a subbasis of quasi-compact open subspaces of $X$.
\end{cor}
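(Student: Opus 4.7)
The plan is to prove that every subbasis element $S \in \mathbb{S}$ is quasi-compact (openness is automatic from the definition of a subbasis) by invoking the standard characterization: a topological space is quasi-compact if and only if every ultrafilter on it converges to some point.

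Given $S \in \mathbb{S}$ and an ultrafilter $\mathcal{U}$ on $S$, the first step is to lift $\mathcal{U}$ to an ultrafilter on $X$ in a natural way. Define
\begin{equation*}
\mathcal{U}' := \{A \subseteq X \mid A \cap S \in \mathcal{U}\}.
\end{equation*}
A routine check using Definition \ref{filter} shows that $\mathcal{U}'$ is a filter on $X$, and maximality follows because for each $A \subseteq X$ either $A \cap S \in \mathcal{U}$ or $S \setminus A = S \setminus (A \cap S) \in \mathcal{U}$, so $\mathcal{U}'$ is an ultrafilter on $X$. By Theorem \ref{2.2}$(ii)$, there exists a point $x \in X_{\mathbb{S}}(\mathcal{U}')$; that is, for every $T \in \mathbb{S}$ one has $x \in T$ if and only if $T \in \mathcal{U}'$, equivalently $T \cap S \in \mathcal{U}$.

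Next I would verify that this candidate point $x$ actually lies in $S$ and that $\mathcal{U}$ converges to $x$ in the subspace topology on $S$. Since $S \cap S = S \in \mathcal{U}$, we get $S \in \mathcal{U}'$, and therefore $x \in S$ by the defining property of $X_{\mathbb{S}}(\mathcal{U}')$. For convergence, let $V$ be an open neighborhood of $x$ in $S$; then $V = U \cap S$ for some open $U$ in $X$, so $U$ contains a finite intersection $T_1 \cap \cdots \cap T_n$ of subbasis elements of $X$ containing $x$. For each $i$, $x \in T_i$ gives $T_i \cap S \in \mathcal{U}$ by the property above, hence $(T_1 \cap \cdots \cap T_n) \cap S \in \mathcal{U}$ by closure under finite intersections, and passing to the superset $V$ yields $V \in \mathcal{U}$. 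Thus every open neighborhood of $x$ in $S$ belongs to $\mathcal{U}$, which proves convergence and hence quasi-compactness of $S$.

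The main conceptual step is the passage from an ultrafilter on $S$ to an ultrafilter on $X$ so that Theorem \ref{2.2}$(ii)$ becomes applicable; everything else is a bookkeeping check that the selected point $x$ actually sees $\mathcal{U}$ converge via the subbasis. Since the argument is uniform in $S \in \mathbb{S}$, the collection $\mathbb{S}$ consists entirely of quasi-compact open subspaces, proving the corollary.
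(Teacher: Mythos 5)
Your proof is correct. The paper itself gives no proof of Corollary~\ref{2.3} --- it simply cites \cite[Corollary 1.2]{Fo} --- so there is nothing to compare against internally, but your argument is complete and sound: the pushforward $\mathcal{U}' = \{A \subseteq X \mid A\cap S \in \mathcal{U}\}$ is indeed an ultrafilter on $X$, Theorem~\ref{2.2}$(ii)$ supplies a point $x\in X_{\mathbb{S}}(\mathcal{U}')$, the biconditional defining $X_{\mathbb{S}}(\mathcal{U}')$ applied to $T=S$ (using $S\in\mathcal{U}$) forces $x\in S$, and the finite-intersection step reduces convergence at arbitrary subbasic neighborhoods to membership of the intersections in $\mathcal{U}$. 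The one ingredient you invoke without comment is the ultrafilter characterization of quasi-compactness (every ultrafilter converges), which is fine to use here since it is standard and the whole surrounding framework of \cite{F} and \cite{Fo} already rests on ultrafilter methods; this is almost certainly the same route taken in the cited source.
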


\section{$A$-subsets and Spectral spaces }\label{sec 3}
We begin this section by showing that the spectrum of a monoid, topologically speaking, is same (homeomorphic) to the spectrum of a ring, i.e., it is a spectral space.
\begin{thm}\label{mspec}
Let $MSpec(A)$ denote the set of all prime ideals of a monoid $A$. It is a topological space
when equipped with the Zariski topology whose closed sets are given by 
$V(I):=\{\mathfrak{p} \in MSpec(A)\ |\ I \subseteq \mathfrak{p}\}$ for an ideal $I$ of $A$. Then, $MSpec(A)$ is a spectral space.
\end{thm}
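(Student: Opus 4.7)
The plan is to apply Finocchiaro's ultrafilter criterion (Theorem~\ref{2.2}) with the natural subbasis $\mathbb{S} := \{D(f) : f \in A\}$, where $D(f) := MSpec(A)\setminus V(fA)$. These sets form a basis for the Zariski topology, and they are even closed under finite intersection since $D(f)\cap D(g)=D(fg)$, an identity that amounts to the primeness condition $fg\in\mathfrak{p}\Leftrightarrow f\in\mathfrak{p}\text{ or }g\in\mathfrak{p}$. The $T_0$ axiom is immediate: if $\mathfrak{p}\neq\mathfrak{q}$, pick $f$ in the symmetric difference, and exactly one of the two primes lies in $D(f)$.

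For the ultrafilter condition, given an ultrafilter $\mathcal{U}$ on $MSpec(A)$ I would define
$$\mathfrak{p}_{\mathcal{U}}\ :=\ \{f\in A\ |\ D(f)\notin\mathcal{U}\}\ =\ \{f\in A\ |\ V(fA)\in\mathcal{U}\},$$
and verify that $\mathfrak{p}_{\mathcal{U}}$ is a prime ideal of $A$. Concretely: the basepoint $0_A$ belongs to $\mathfrak{p}_{\mathcal{U}}$ because $V(0)=MSpec(A)\in\mathcal{U}$; multiplicative absorption follows since $V(fA)\subseteq V(afA)$ forces the latter into $\mathcal{U}$ by the filter axiom; properness $1\notin\mathfrak{p}_{\mathcal{U}}$ holds because $V(1\cdot A)=\emptyset\notin\mathcal{U}$; and primality is exactly the ultrafilter dichotomy applied to $V(fg)=V(f)\cup V(g)$. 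The defining equivalence $\mathfrak{p}_{\mathcal{U}}\in D(f)\iff f\notin\mathfrak{p}_{\mathcal{U}}\iff D(f)\in\mathcal{U}$ then shows $\mathfrak{p}_{\mathcal{U}}\in X_{\mathbb{S}}(\mathcal{U})$, so condition (ii) of Theorem~\ref{2.2}(1) is met and $MSpec(A)$ is spectral.

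The only step that requires any genuine verification beyond transcribing the classical ring-theoretic argument is the identity $V(f)\cup V(g)=V(fg)$ for a monoid: one direction is trivial, and the other uses that if $I,J$ are $A$-ideals with $a\in I\setminus\mathfrak{p}$ and $b\in J\setminus\mathfrak{p}$, then $ab$ lies in both $I$ and $J$, and primeness of $\mathfrak{p}$ forces one of $a,b$ into $\mathfrak{p}$, a contradiction. I expect this small lemma to be the only real obstacle; everything else is a routine adaptation in which the basepoint $0_A$ plays the role of the ring-theoretic zero. An alternative route is to check Hochster's four axioms directly (taking the $D(f)$ as the required basis of quasi-compact opens and observing that $V(I)$ is irreducible precisely when the radical $\sqrt{I}$ is prime, which then serves as the unique generic point), but the ultrafilter approach above is shorter and matches the strategy the paper adopts for its later spectral space results.
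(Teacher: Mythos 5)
Your proof is correct, but it takes a genuinely different route from the paper. The paper verifies Hochster's four topological axioms directly: it shows $T_0$, quasi-compactness of $MSpec(A)$ via the observation that any basic open $\mathcal{D}(x)$ containing the unique maximal ideal $m_A$ must equal the whole space, quasi-compactness of each $\mathcal{D}(x)$ by identifying it with $MSpec(A_x)$, and existence of unique generic points for irreducible closed sets via a Zorn's-lemma argument on chains of primes. You instead invoke Finocchiaro's ultrafilter criterion (Theorem~\ref{2.2}), constructing the prime $\mathfrak{p}_{\mathcal{U}} = \{f \in A : V(fA)\in\mathcal{U}\}$ from an ultrafilter and checking it lands in $X_{\mathbb{S}}(\mathcal{U})$. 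Both work; the paper's argument is more self-contained and exploits the pointed-monoid-specific fact that $MSpec$ has a closed point given by $m_A$ (a fact with no ring-theoretic analogue and which makes quasi-compactness nearly immediate), while your argument is shorter, more uniform, and — as you note — is exactly the template the paper itself adopts for the subsequent results \ref{3.2}, \ref{3.3}, and \ref{3.8}; so in a sense you have pre-empted the paper's own toolkit. One cosmetic remark: your verification of $V(fg)=V(f)\cup V(g)$ is phrased in terms of general ideals $I,J$ and reads a bit awkwardly; for principal ideals it is a one-line consequence of the definition of prime (the complement $A\setminus\mathfrak{p}$ is multiplicatively closed), and stating it that way would be cleaner.
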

\begin{proof}
Clearly, a subbasis of open sets for this topology is given by $\{\mathcal{D}(x)\ |\ x \in A\}$ where $\mathcal{D}(x):=\{\mathfrak{p} \in MSpec(A)\ |\ x \notin \mathfrak{p}\}$. Further, since $\mathcal{D}(x)\cap \mathcal{D}(y)= \mathcal{D}(xy)$ for any $x, y \in A$, the collection $\{\mathcal{D}(x)\ |\ x \in A\}$ gives a basis for the Zariski topology on $MSpec(A)$. For any $\mathcal{D}(x)$ containing the unique maximal ideal $m_A$ of $A$, we have $\mathcal{D}(x) = MSpec(A)$ and this shows that $MSpec(A)$ is quasi-compact. Since this implies that $MSpec(A)$ is quasi-compact for any monoid $A$, we have $\mathcal{D}(x)$, which is same as $MSpec(A_x)$, is quasi-compact for all $x \in A$. Hence, $\{\mathcal{D}(x)\ |\ x \in A\}$ gives a basis of quasi-compact open subspaces that is clearly closed under
finite intersections. It is also easy to see that $MSpec(A)$ with this topology is $T_0$. Let $Y$ be an irreducible closed subspace of $MSpec(A)$. Then, $Y$ can be partially ordered by inclusion, i.e.,
$\mathfrak{p}\leq \mathfrak{q} \iff \mathfrak{p}\subseteq \mathfrak{q} \iff \mathfrak{q}\in \overline{\{\mathfrak{p}\}}$. Consider any arbitrary non-empty chain
of prime ideals $\{\mathfrak{p}_\alpha\}_{\alpha\in \Lambda}$ in $Y$. Since $Y$ is a closed subset, $Y=V(I)$ for some ideal $I$ of $A$. Thus, $I\subseteq \mathfrak{p}_\alpha$ for all $\alpha \in \Lambda$. Hence, $I\subseteq \cap_{\alpha\in \Lambda}\mathfrak{p}_\alpha$ and therefore $\cap_{\alpha\in \Lambda}\mathfrak{p}_\alpha$, which is a prime ideal, is in $Y$. Thus, by Zorn's lemma, $Y$ has a minimal element and let us denote it by $\mathfrak{p}_Y$. Then, $\overline{\{\mathfrak{p}_Y\}}= Y$ because otherwise it will contradict the irreducibility of $Y$. Hence, $\mathfrak{p}_Y$ is a generic point of $Y$ and it is unique because of being minimal. Thus, $MSpec(A)$ is a spectral spaces by Hochster's characterization.
\end{proof}

Let $S$ be any set. We denote the power set of $S$ as $2^S$. The topology on $2^S$ given by an open sub-basis of sets of the form, $D(F):=\{R\subseteq S~|~F\nsubseteq R\}$ where $F$ is a finite subset of $S$, is called the {\it hull-kernel topology} on $2^S$. The complement of the set $D(F)$ is denoted as $V(F):=\{R\subseteq S~|~F\subseteq R\}$. Given any $A$-set $M$, let $SSet(M|A)$ denote the set of all $A$-subsets of $M$. By the hull-kernel topology on $SSet(M|A)$, we will denote the subspace topology induced by the hull-kernel topology on $2^M$. We will show that $SSet(M|A)$ is a spectral space. This is analogous to the result of Finocchiaro, Fontana and Spirito \cite[Proposition 2.1]{Fo} for the case of submodules of a module over a ring.\smallskip
\begin{thm}\label{3.2}
For any monoid $A$ and an $A$-set $M$, $SSet(M|A)$ is a spectral space. Moreover, the collection of sets $\mathbb{S}:=\{D(x_1,\hdots,x_n)\ | \ x_1,\hdots,x_n \in M \}$
is a subbasis of quasi-compact open subspaces of $SSet(M|A)$.
\end{thm}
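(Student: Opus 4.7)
The plan is to apply Finocchiaro's ultrafilter criterion (Theorem \ref{2.2}), following closely the argument of \cite[Proposition 2.1]{Fo} for submodules of a module over a ring. I will take as the relevant subbasis the proposed collection $\mathbb{S}=\{D(x_1,\ldots,x_n)\mid x_1,\ldots,x_n\in M\}$, which, by definition of the hull--kernel topology on $2^M$ restricted to $SSet(M|A)$, is indeed a subbasis of open sets. Once conditions (i)--(ii) of Theorem \ref{2.2}(1) are checked, the quasi-compactness of each $D(x_1,\ldots,x_n)$ asserted in the second sentence follows immediately from Corollary \ref{2.3}.

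The $T_0$ axiom is easy: if $N_1,N_2\in SSet(M|A)$ are distinct, then without loss of generality some $x\in N_1\setminus N_2$ exists, and the subbasic open $D(x)$ contains $N_2$ but not $N_1$. The substantive step is showing $X_{\mathbb{S}}(\mathcal{U})\neq\emptyset$ for every ultrafilter $\mathcal{U}$ on $SSet(M|A)$. My candidate point is
$$N_{\mathcal{U}}:=\{x\in M\mid V(x)\in\mathcal{U}\},$$
where $V(x)=\{N\in SSet(M|A)\mid x\in N\}$. I first check that $N_{\mathcal{U}}$ is an $A$-subset of $M$: the basepoint lies in every $A$-subset, so $V(0)=SSet(M|A)\in\mathcal{U}$; and if $x\in N_{\mathcal{U}}$ and $a\in A$, then every $A$-subset containing $x$ contains $ax$, hence $V(x)\subseteq V(ax)$, so by property (iii) of a filter $V(ax)\in\mathcal{U}$, giving $ax\in N_{\mathcal{U}}$.

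It remains to verify $N_{\mathcal{U}}\in X_{\mathbb{S}}(\mathcal{U})$, i.e.\ that for every finite tuple $(x_1,\ldots,x_n)\in M^n$, the equivalence $N_{\mathcal{U}}\in D(x_1,\ldots,x_n)\iff D(x_1,\ldots,x_n)\in\mathcal{U}$ holds. By the definition of $N_{\mathcal{U}}$ and filter property (ii), $\{x_1,\ldots,x_n\}\subseteq N_{\mathcal{U}}$ is equivalent to each $V(x_i)\in\mathcal{U}$, and hence to $\bigcap_{i=1}^n V(x_i)=V(x_1,\ldots,x_n)\in\mathcal{U}$. Taking complements in $SSet(M|A)$ and invoking the ultrafilter property (Definition \ref{filter}) yields $\{x_1,\ldots,x_n\}\nsubseteq N_{\mathcal{U}}\iff D(x_1,\ldots,x_n)\in\mathcal{U}$, which is precisely what is required.

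The main obstacle, modest as it is, is bookkeeping around the construction of $N_{\mathcal{U}}$: one must check that it is genuinely an $A$-subset (using that $A$-subsets are closed under the $A$-action, which translates inclusions $V(x)\subseteq V(ax)$ into membership in $\mathcal{U}$ via filter monotonicity) and that finite intersections of $V(x_i)$ transfer correctly between filter membership and the ultrafilter's negation property. Once this is in place, the rest of the proof is a direct application of the quoted criteria.
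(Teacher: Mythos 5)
Your proof is correct and follows essentially the same argument as the paper's: both verify $T_0$ via the subbasic open $D(x)$ for some $x$ in one $A$-subset but not the other, both construct the candidate point $N_{\mathcal U}=\{x\in M\mid V(x)\in\mathcal U\}$ and verify it is an $A$-subset using filter monotonicity on $V(x)\subseteq V(ax)$, and both check the biconditional for membership in $X_{\mathbb S}(\mathcal U)$ using the ultrafilter dichotomy and closure of filters under finite intersection, concluding quasi-compactness from Corollary~\ref{2.3}. The only cosmetic difference is that you package the biconditional as a single chain of equivalences (via complementation) whereas the paper treats the two directions separately, and you explicitly note $V(0)\in\mathcal U$, which the paper leaves implicit.
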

\begin{proof}
Let $X$ denotes $SSet(M|A)$. Let $N,N'\in X$ such that $N\neq N'$. Suppose, without loss of generality, that there exists some $y \in N$ such that $y \notin N'$. Then $SSet(M|A) \setminus V(y)$ is an open neighborhood of $N'$ not containing $N$. This shows that the hull-kernel topology on $X$ is $T_0$. By Theorem \ref{2.2}, it is now enough to show that there is a subbasis $\mathbb{S}$ of $X$ such that $X_{\mathbb{S}}({\mathcal{U}}) \neq \emptyset$ for any ultrafilter $\mathcal{U}$ on $X$. For a given ultrafilter $\mathcal{U}$ of $X$, set $N_{\mathcal{U}}:= \{y \in M\ |\ V(y) \in \mathcal{U}\}$. For $a \in A$ and $y \in N_{\mathcal{U}}$, we have $V(y) \subseteq V(ay)$ and hence $ay \in N_{\mathcal{U}}$ (Definition \ref{filter}(iii)). Hence, $N_{\mathcal{U}}$ is an $A$-subset of $M$. We claim that $N_{\mathcal{U}} \in X_{\mathbb{S}}(\mathcal{U})$ where $X_{\mathbb{S}}(\mathcal{U})$ is as in Theorem \ref{2.2}. Indeed, if $N_{\mathcal{U}} \in S=D(x_1,\hdots,x_n)$ for some $S \in \mathbb{S}$, then there exists at least one $x_i \notin N_{\mathcal{U}}$ for $i=1,\hdots,n$. Therefore, by the definition of $N_{\mathcal{U}}$, $V(x_i) \notin \mathcal{U}$ and this implies $D(x_i) \in \mathcal{U}$. Since $D(x_i) \subseteq D(x_1,\hdots,x_n)$, we have $S= D(x_1,\hdots,x_n) \in \mathcal{U}$ (Definition \ref{filter}(iii)). \\
Conversely, let $S = D(x_1,\hdots,x_n) \in \mathcal{U}$. If possible, let $N_{\mathcal{U}} \notin  
D(x_1,\hdots,x_n)$. Then $N_{\mathcal{U}}\in V(x_1,\hdots,x_n)$ i.e., $V(x_i) \in \mathcal{U}$ for all $i$. Since $\mathcal{U}$ is a filter, $\bigcap^n_{i=1} V(x_i) = V(x_1,\hdots,x_n) \in \mathcal{U}$. Hence, both $D(x_1,\hdots,x_n)$ and $V(x_1,\hdots,x_n)$ belongs to $\mathcal{U}$ and so does their intersection, i.e., $\emptyset \in \mathcal{U}$. This contradicts the definition of a filter. Thus $N_{\mathcal{U}} \in  
D(x_1,\hdots,x_n)$ and our claim is proved. Thus, $X$ is a spectral space and the sets in $\mathbb{S}$ gives a subbasis of quasi-compact open subspaces of $X$ by Corrolary \ref{2.3}. \\
\end{proof}
The next proposition is an analogue of the result \cite[Proposition 2.4]{Fo} for the case of modules over a ring.
\begin{thm}\label{3.3}
Let $M$ be an $A$-set. Then, $SSet_{\bullet}(M|A) := SSet(M|A) \setminus {M}$ endowed with the hull-kernel subspace topology
is a spectral space if and only if $M$
is finitely generated.
\end{thm}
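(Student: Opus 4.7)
The plan is to prove the two implications separately, leaning on the spectral structure of $SSet(M|A)$ established in Theorem \ref{3.2}.

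For the ``if'' direction, suppose $M = \langle x_1, \dots, x_n \rangle_A$. The key observation is that inside $SSet(M|A)$ one has the identity $\{M\} = V(x_1) \cap \dots \cap V(x_n)$, because any $A$-subset $N$ containing every generator $x_i$ is forced to equal $M$. By Theorem \ref{3.2} each $D(x_i)$ is quasi-compact open in $SSet(M|A)$, so each $V(x_i) = SSet(M|A) \setminus D(x_i)$ is open in the patch topology, and hence so is their finite intersection $\{M\}$. Consequently $SSet_\bullet(M|A) = SSet(M|A) \setminus \{M\}$ is closed in the patch topology, i.e.\ a patch. Invoking the property recalled in Section~2 that patches of spectral spaces are themselves spectral in the induced subspace topology then concludes this direction.

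For the ``only if'' direction, assume that $SSet_\bullet(M|A)$ is spectral and therefore quasi-compact. The family $\mathcal{C} := \{D(x) \cap SSet_\bullet(M|A) \mid x \in M\}$ is an open cover: given any $N \in SSet_\bullet(M|A)$, one has $N \subsetneq M$, so there exists $x \in M \setminus N$ and hence $N \in D(x) \cap SSet_\bullet(M|A)$. By quasi-compactness I extract a finite subcover indexed by some $x_1, \dots, x_n \in M$. The covering condition now reads: every proper $A$-subset of $M$ omits at least one $x_i$. Equivalently, the $A$-subset $\langle x_1, \dots, x_n \rangle_A$, which manifestly contains all the $x_i$, cannot be proper in $M$, hence equals $M$, proving that $M$ is finitely generated.

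I expect the main subtlety to lie in the forward implication, where one has to be careful to translate from the original topology on $SSet(M|A)$ to the patch topology: the argument rests on the fact that complements of subbasic quasi-compact opens are patch-open, so a finite intersection of such complements recognizes $\{M\}$ as a patch-open singleton. The reverse direction is essentially a single quasi-compactness argument; the only minor point to note is that the basepoint $0 \in M$ lies in every $A$-subset (so $D(0) \cap SSet_\bullet(M|A) = \emptyset$), but this causes no trouble since those $x_i$ appearing in the finite subcover are automatically non-basepoint generators.
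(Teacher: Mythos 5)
Your proof is correct, but the ``if'' direction takes a genuinely different route from the paper. The paper runs the ultrafilter criterion directly: it forms $N_{\mathcal{U}}$ for an ultrafilter $\mathcal{U}$ on $SSet_\bullet(M|A)$ and shows $N_{\mathcal{U}}$ is proper (if it were all of $M$ then $V(F)\cap SSet_\bullet(M|A)$ would lie in $\mathcal{U}$, yet any member of that set contains the generating set $F$ and hence equals $M$, which is excluded). You instead observe that $\{M\} = V(x_1)\cap\cdots\cap V(x_n)$ is a finite intersection of complements of quasi-compact opens, hence patch-open, so $SSet_\bullet(M|A)$ is a patch in $SSet(M|A)$ and is therefore spectral in the induced topology. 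This is precisely the shorter style of argument the paper itself flags in Remark~4.5 as applicable throughout Section~3; it buys you a cleaner proof at the cost of invoking the nontrivial fact that patches of spectral spaces are spectral (Hochster's Proposition~9), whereas the paper's ultrafilter computation is self-contained modulo Theorem~\ref{2.2}. Your ``only if'' direction is the contrapositive of the paper's argument and is essentially identical: both hinge on the cover $\{D(x)\}_{x\in M}$ and the observation that a finite subcover forces $\langle x_1,\dots,x_n\rangle = M$. Your closing remark about $D(0)$ being empty is a harmless side note; the argument does not need it.
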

\begin{proof}
Assume that $M$ is a finitely generated $A$-set with a finite set of generators, which we denote by $F$. A subbasis $\mathbb{S}$ for $X:= SSet_{\bullet}(M|A)$ is given by the open sets $\{ D(x_1,...,x_n)\ | \  x_1,....,x_n \in M \}$. If $\mathcal{U}$ is an 
ultrafilter on $X$, the subset $N_{\mathcal{U}}:= \{y \in M\ |\ V(y)\cap X \in \mathcal{U}\}$ is an $A$-subset of $M$ following the proof of Proposition \ref{3.2}. If only we can show that $N_{\mathcal{U}}$ is a proper subset of $M$, it follows immediately that $N_{\mathcal{U}} \in X_{\mathbb S}(\mathcal{U})$ proving that $X$ is spectral by Theorem \ref{2.2}. If $N_{\mathcal{U}} = M$, then $V(F) \cap X \in \mathcal{U}$ and as the empty set is not a member of any filter, we must have $V(F) \cap X \neq \emptyset$. So, pick any $A$-subset $N \in V(F) \cap X$. But then $N$ contains $F$, the generating set of $M$. This shows that $M = N \in \mathcal{U}$ giving a contradiction. Thus, we have $N_{\mathcal{U}} \neq M$.\smallskip
\newline Conversely, assume that $M$ is not finitely generated. Then the family of open subsets $\{D(x)\ |\ x \in M\}$ gives a cover of $X$ such that for any finite
subset $F$ of $M$, the collection of open sets $\{D(x)|x \in F\}$ is not a subcover of $X$. Indeed, consider the $A$-subset $N := \langle F \rangle $ of $M$. Since $M$ is not finitely generated, clearly $N\neq M$. Thus, $N \in X \setminus \bigcup\{D(x)\ |\ x \in F\}$. This shows that $X$ is not quasi-compact and hence not a spectral space if $M$ is not finitely generated. This completes the proof.
\end{proof}
As a particular case of Propositions \ref{3.2} and \ref{3.3}, we have the following corollary.
\begin{cor}\label{allideal}
The set of all ideals of $A$,  $Id(A):=SSet(A|A)$ and the set of all proper ideals of $A$, $Id(A)_{\bullet}:=SSet(A|A)\setminus \{A\}$, are spectral spaces.
\end{cor}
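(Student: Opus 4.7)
The plan is to deduce the corollary as a direct specialisation of Propositions \ref{3.2} and \ref{3.3} applied to the $A$-set $M=A$, so the only real content is matching definitions.

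First I would check that $SSet(A|A)=Id(A)$. An $A$-subset $N$ of $A$ is by definition a pointed subset closed under the $A$-action, i.e.\ $aN\subseteq N$ for every $a\in A$, which is precisely the condition $AN\subseteq N$ defining an ideal of $A$. Conversely, any ideal $I$ of $A$ is a pointed subset with $AI\subseteq I$ and is thus an $A$-subset of the $A$-set $A$. Hence $Id(A)=SSet(A|A)$ and also $Id(A)_{\bullet}=SSet(A|A)\setminus\{A\}=SSet_{\bullet}(A|A)$, both as sets and, by construction, with the same hull-kernel (subspace) topology.

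Having identified the two sides, the spectrality of $Id(A)$ follows at once from Proposition \ref{3.2} with $M=A$, which also exhibits $\{D(x_1,\dots,x_n)\mid x_1,\dots,x_n\in A\}$ as a subbasis of quasi-compact open subsets. For $Id(A)_{\bullet}$ I would invoke Proposition \ref{3.3}; its hypothesis requires $A$ to be finitely generated as an $A$-set, which is immediate since every $a\in A$ satisfies $a=a\cdot 1$, so $\{1\}$ is a (one-element) generating set for $A$ as an $A$-set. Thus Proposition \ref{3.3} applies and $Id(A)_{\bullet}$ is spectral.

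There is no real obstacle here beyond the translation between ``$A$-subset of $A$'' and ``ideal of $A$''; once that is noted, the corollary is a literal instance of the two preceding propositions, and one could omit a written proof altogether. If desired, one could add a sentence remarking that under this identification the subbasic open set $D(x_1,\dots,x_n)$ consists of those (proper) ideals missing at least one of $x_1,\dots,x_n$, making the topology the usual one on $Id(A)$.
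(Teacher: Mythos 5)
Your proof is correct and is exactly the argument the paper intends: it states the corollary "as a particular case" of Propositions \ref{3.2} and \ref{3.3} with $M=A$, and you have correctly supplied the two small checks that the paper leaves implicit (that $A$-subsets of $A$ are precisely ideals, and that $A$ is finitely generated over itself by $\{1\}$).
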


\section{Closure operations on monoids and spectral spaces}
In commutative ring theory, closure operations are defined on ideals of a ring or more generally on submodules of a module over a ring (see, for instance, \cite{N}). We begin this section by defining closure operations for monoids likewise.
\begin{defn}
Given a monoid $A$ and an $A$-set $M$, a closure operation on $SSet(M|A)$ is a
map 
\begin{equation*}
cl:SSet(M|A) \longrightarrow SSet(M|A)\ \ \ \ N \mapsto N^{cl}
\end{equation*}
satisfying the following conditions:\smallskip
\newline $(i)$~(Extension) $N \subseteq N^{cl}$ for all $N \in SSet(M|A)$.\\
$(ii)$~(Order-preservation) If $N_1 \subseteq N_2$, then $N_1^{cl} \subseteq N_2^{cl}$ for all $N_1,N_2 \in SSet(M|A)$.\\
$(iii)$~(Idempotence) $N^{cl}=(N^{cl})^{cl}$ for all $N \in SSet(M|A)$.\smallskip
\newline A closure operator $cl$ is said to be {\it finite type} if 
$N^{cl} := \bigcup \{L^{cl}\ |\  L \subseteq N,\  L \in SSet(M|A),\  L  \text{  is finitely generated } \}$.
\end{defn}
Given any closure operation on submodules of a module over a ring, a closure operation of finite type can be constructed from it (see, for instance \cite[Construction 3.1.6]{N}). We prove the same for $A$-subsets of an $A$-set over a monoid $A$ in the following proposition.
\begin{thm}
Let $cl:SSet(M|A) \longrightarrow SSet(M|A)$ be a closure operation on $SSet(M|A)$. We set,
$$ N^{cl_f} := \bigcup \{L^{cl}\ |\  L \subseteq N,\  L \in SSet(M|A),\  L  \text{  is finitely generated } \}$$
Then $cl_f$ is a closure operator of finite type. Further, $cl$ is of finite type if and only if $cl = cl_f$.
\end{thm}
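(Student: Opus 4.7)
The plan is to verify the three closure axioms for $cl_f$, then verify the finite type property, and finally derive the equivalence.

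\textbf{Step 1: Closure axioms.} Extension is essentially immediate: for any $x \in N$, the principal $A$-subset $\langle x \rangle$ is finitely generated and sits inside $N$, so by extension for $cl$ we have $x \in \langle x\rangle \subseteq \langle x\rangle^{cl} \subseteq N^{cl_f}$. Order-preservation is also straightforward: if $N_1 \subseteq N_2$, every finitely generated $L \subseteq N_1$ is also a finitely generated subset of $N_2$, so the union defining $N_1^{cl_f}$ is a sub-union of the one defining $N_2^{cl_f}$.

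\textbf{Step 2: Idempotence (the main obstacle).} One containment $N^{cl_f} \subseteq (N^{cl_f})^{cl_f}$ follows from extension. For the reverse, I will show that any finitely generated $L \subseteq N^{cl_f}$ satisfies $L^{cl} \subseteq N^{cl_f}$; taking the union over such $L$ then gives $(N^{cl_f})^{cl_f} \subseteq N^{cl_f}$. Write $L = \langle x_1, \ldots, x_n\rangle$. Each $x_i \in N^{cl_f}$ comes from some finitely generated $L_i \subseteq N$ with $x_i \in L_i^{cl}$. Let $L' := \langle L_1 \cup \cdots \cup L_n \rangle$; this is finitely generated and contained in $N$. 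By order-preservation of $cl$, each $x_i \in L_i^{cl} \subseteq L'^{cl}$, so $L \subseteq L'^{cl}$, whence $L^{cl} \subseteq (L'^{cl})^{cl} = L'^{cl} \subseteq N^{cl_f}$ by idempotence of $cl$ and the definition of $cl_f$. This step is the crux because it requires turning finitely many finitely-generated witnesses into a single one, which is exactly what finite generation of $L$ makes possible.

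\textbf{Step 3: Finite type.} The useful observation is that if $L$ is itself finitely generated, then $L^{cl_f} = L^{cl}$: containment $\supseteq$ comes from taking $L$ itself in the union, and $\subseteq$ from order-preservation applied to any finitely generated $L'\subseteq L$. With this in hand, for $x \in N^{cl_f}$ choose a finitely generated $L \subseteq N$ with $x \in L^{cl} = L^{cl_f}$, showing $N^{cl_f} \subseteq \bigcup\{L^{cl_f} : L \subseteq N \text{ finitely generated}\}$; the reverse containment is immediate from order-preservation of $cl_f$. Hence $cl_f$ is of finite type.

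\textbf{Step 4: The equivalence.} If $cl = cl_f$, then $cl$ is of finite type because $cl_f$ is (Step 3). Conversely, if $cl$ is of finite type, then for every $N \in SSet(M|A)$ the defining formula for finite type is precisely the defining formula for $cl_f$, so $N^{cl} = N^{cl_f}$ for all $N$, i.e.\ $cl = cl_f$.
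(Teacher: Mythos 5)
Your proof is correct and the core idempotence argument (Step 2) is essentially the same as the paper's: you assemble the finitely many finitely generated witnesses $L_i$ for the generators of $L$ into a single finitely generated $A$-subset $L' = \langle L_1 \cup \cdots \cup L_n\rangle$ and pass through $cl$-idempotence; the paper does the same with $K := \bigcup K_i$ (which, for $A$-subsets, is already an $A$-subset and equals your $L'$). The one place where you go beyond the paper is Step 3: the paper asserts $cl_f$ is of finite type without spelling out the verification, whereas you explicitly establish the needed lemma that $L^{cl_f} = L^{cl}$ for finitely generated $L$, from which $N^{cl_f} = \bigcup\{L^{cl_f} : L \subseteq N \text{ f.g.}\}$ follows. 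This is a genuine (small) gap in the paper's exposition that your proof fills; everything else runs in parallel to the original argument.
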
 
\begin{proof}
Take any $x \in N$ and consider the finitely generated $A$-subset $\langle x \rangle$ of $N$. Since $\langle x \rangle \subseteq \langle x \rangle^{cl}$, we have $x \in N^{cl_f}$. This shows extension. Order-preservation is obvious. As for idempotence, suppose $x \in (N^{cl_f})^{cl_f}$. Then there exists some finitely generated $A$-subset $L \subseteq N^{cl_f}$ such that $x \in L^{cl}$. Let $\{l_1, l_2,...,l_n\}$ be a finite generating set of $L$. Since each $l_i \in  N^{cl_f}$, there exists finitely generated $K_i \subseteq N$ such that $l_i \in {K_i}^{cl}$. We set, $K := \bigcup_{i=1}^{n} K_i$. Clearly, $K$ is finitely generated. Any element of $L$ is of the form $\lambda l_i$ for some $\lambda \in A$ and $l_i$ belonging to the generating set of $L$ and therefore $\lambda l_i \in {K_i}^{cl} \subseteq K^{cl}$. In other words, $L \subseteq K^{cl}$. Hence,
$ x \in L^{cl} \subseteq (K^{cl})^{cl} = K^{cl}$ and this implies $x \in N^{cl_f}$. So we have, $(N^{cl_f})^{cl_f} \subseteq N^{cl_f}$. The other way, $N^{cl_f} \subseteq (N^{cl_f})^{cl_f}$, follows from the extension property of $cl_f$. Thus, we have shown idempotence. Hence, $cl_f$ is a closure operation. The last statement of the lemma follows from the definition of finite type closure operation.
\end{proof}
The next proposition is an analogue of the result, \cite[Proposition 3.4]{Fo} on finite type closure operations on modules over rings.
\begin{thm}\label{3.8}
Let $M$ be an $A$-set and $cl$ be a closure operation of finite
type on $SSet(M|A)$. Then, the set $SSet^{cl}(M|A) := \{N \in SSet(M|A)\ |\  N = N^{cl}\}$ is a spectral space. Moreover, $SSet^{cl}(M|A)$ is closed in $SSet(M|A)$, endowed with the
patch topology. 
\end{thm}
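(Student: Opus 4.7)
The plan is to establish both conclusions at once by showing that $Y := SSet^{cl}(M|A)$ is closed in $SSet(M|A)_{patch}$; then the excerpt's remark that patches of spectral spaces are spectral (with their subspace topology) finishes the job. Since Proposition \ref{3.2} identifies $X := SSet(M|A)$ as a spectral space with the subbasis $\mathbb{S} = \{D(x_1,\ldots,x_n) \mid x_i \in M\}$ of quasi-compact open sets, I would invoke Theorem \ref{2.2}(2): it suffices to show that for every ultrafilter $\mathcal{V}$ on $X$, one has $Y_{\mathbb{S}}(\mathcal{V}) \subseteq Y$.

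So fix an ultrafilter $\mathcal{V}$ and $N \in Y_{\mathbb{S}}(\mathcal{V})$. Specializing the defining biconditional to the subbasic sets $D(y)$ for single elements $y \in M$ gives the translation
\begin{equation*}
y \notin N \iff D(y) \cap Y \in \mathcal{V}, \qquad y \in N \iff D(y) \cap Y \notin \mathcal{V}.
\end{equation*}
Suppose, aiming at a contradiction, that $N \neq N^{cl}$, and pick $x \in N^{cl} \setminus N$. Since $cl$ is of finite type, there are finitely many elements $y_1, \ldots, y_n \in N$ with $x \in \langle y_1, \ldots, y_n \rangle^{cl}$.

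The key step is the containment
\begin{equation*}
D(x) \cap Y \subseteq \bigcup_{i=1}^{n} \bigl( D(y_i) \cap Y \bigr).
\end{equation*}
Indeed, take $N' \in Y$ with every $y_i \in N'$; then $\langle y_1,\ldots,y_n\rangle \subseteq N'$, so by order-preservation and the fact that $N'$ is $cl$-closed, $x \in \langle y_1,\ldots,y_n\rangle^{cl} \subseteq (N')^{cl} = N'$, i.e.\ $N' \notin D(x)$. Contrapositively, any $N' \in D(x) \cap Y$ must fail to contain some $y_i$, giving the claimed cover. Since $x \notin N$, the translation above yields $D(x) \cap Y \in \mathcal{V}$; the ultrafilter property applied to a finite cover forces some $D(y_i) \cap Y \in \mathcal{V}$, contradicting $y_i \in N$. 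Hence $N = N^{cl}$, so $N \in Y$, as needed.

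The main obstacle is exactly the key containment in the third paragraph: it is the one place where the finite type hypothesis is indispensable, for without it the natural cover of $D(x) \cap Y$ by the sets $D(y) \cap Y$ as $y$ ranges over generators of $N$ would be infinite, and ultrafilters do not split infinite covers. The rest is bookkeeping: unwinding the definition of $Y_{\mathbb{S}}(\mathcal{V})$ via the singleton subbasic sets $D(y)$ and using that $\mathcal{V}$ is an ultrafilter to pass from a finite union membership to a single-set membership.
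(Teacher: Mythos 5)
Your proof is correct, and it reaches both conclusions by a route that is logically dual to the paper's and more economical in its organization. The paper applies Theorem~\ref{2.2}(1) directly to $SSet^{cl}(M|A)$: it takes an ultrafilter $\mathcal{U}$ on $SSet^{cl}(M|A)$, shows $(N_{\mathcal{U}})^{cl}=N_{\mathcal{U}}$, and then separately asserts patch-closedness via Theorem~\ref{2.2}(2) without writing that second argument out. You instead verify the patch-closedness criterion of Theorem~\ref{2.2}(2) once (ultrafilters on the ambient space $SSet(M|A)$) and import spectrality for free from the standard fact that a patch in a spectral space is spectral, so a single computation yields both conclusions. The technical core is the same in both: the finite-type hypothesis is what produces a finitely generated $\langle y_1,\dots,y_n\rangle\subseteq N$ with $x\in\langle y_1,\dots,y_n\rangle^{cl}$, and then the ultrafilter does the rest. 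But where the paper works with the $V$-sets and uses that a filter is closed under finite intersections (establishing $V(l_1,\dots,l_n)\cap SSet^{cl}\subseteq V(x)\cap SSet^{cl}$ and then $V(x)\cap SSet^{cl}\in\mathcal{U}$), you work with the complementary $D$-sets and use the prime property of ultrafilters on the finite cover $D(x)\cap Y\subseteq\bigcup_i\bigl(D(y_i)\cap Y\bigr)$. These are the same inclusion read in complement; both correctly isolate the one place where finite type is essential. Your phrasing is slightly cleaner in that it makes the patch-closedness step explicit (which the paper leaves as an assertion) and derives spectrality as a corollary of it rather than as a parallel argument.
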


\begin{proof}
Let $\mathcal{U}$ be an ultrafilter on $SSet^{cl}(M|A)$. It is enough to show that $N_{\mathcal{U}}$ (as in Proposition \ref{3.2}) is in $SSet(M|A)^{cl}$ to prove that $SSet^{cl}(M|A)$ is spectral (by Theorem \ref{2.2}). Let $x \in (N_{\mathcal{U}})^{cl}$. Since $cl$ is a closure operation of finite type, there exists a finitely generated
$A$-set $\langle{l_1,\hdots,l_n}\rangle \subseteq N_{\mathcal{U}} $ such that $x \in \langle{l_1,\hdots,l_n}\rangle^{cl}$. Hence, $x \in K^{cl}$ for all $K \supseteq \langle{l_1,\hdots,l_n}\rangle$. In other words, $x \in K^{cl}$ for all $K \in V(l_1,\hdots,l_n)$. Therefore, $V(l_1,\hdots,l_n) \cap SSet^{cl}(M|A) \subseteq V(x) \cap SSet^{cl}(M|A)$. Since $V(l_i) \cap SSet^{cl}(M|A) \in \mathcal{U}$ for all $l_i$ (by the definition of $N_{\mathcal{U}}$), we have $\cap_{i=1}^n\big(V(l_i)\cap SSet^{cl}(M|A)\big) =V(l_1,\hdots,l_n) \cap SSet^{cl}(M|A) \in \mathcal{U}$. Hence, $V(x) \cap SSet^{cl}(M|A) \in \mathcal{U}$ and so $x \in N_{\mathcal{U}}$. This shows $(N_{\mathcal{U}})^{cl} \subseteq N_{\mathcal{U}}$ and thus $N_{\mathcal{U}} =(N_{\mathcal{U}})^{cl}$. Hence, $SSet^{cl}(M|A)$ is a spectral space. Finally, it follows from Theorem \ref{2.2}(2) that $SSet^{cl}(M|A)$ is closed in $SSet(M|A)$, endowed with the
patch topology. 
\end{proof}
Traditionally closure operations for rings are defined on a whole class of rings rather than one ring at a time (see, for instance, \cite{N}). Similarly, we can define closure operations on a class of monoids and define an important property of closure operations called {\it persistence} :
\begin{defn}\label{Def3.8}
Let $\mathcal{M}$ be a subcategory of the category of monoids. Let $cl$ be a closure operation defined on the monoids of $\mathcal{M}$. We say that $cl$ is {\it persistent} if for any morphism of monoids $\phi: A \longrightarrow B$ in $\mathcal{M}$ and any ideal $I$ of $A$, we have $\phi(I^{cl})B \subseteq (\phi(I)B)^{cl}$. For a persistent closure
operation $cl$ on a subcategory $\mathcal{M}$ of monoids, we say that $cl$ commutes with localization
in $\mathcal{M}$ if for any monoid $A \in \mathcal{M}$ and any multiplicative set $S \subseteq A$, such that the localization map $A \longrightarrow S^{-1}A$ is in  $\mathcal{M}$, we have
$S^{-1}(I^{cl}) = (S^{-1}I)^{cl}$
for every ideal $I$ of $A$.
\end{defn} 
\begin{ex}
\emph{ Simple examples of closure operations on $\mathcal{I}$, the collection of all ideals of a monoid $A$, are:}\smallskip
\newline\emph{(1) ({\it Identity closure}) $I^{cl} := I$ for all $I \in \mathcal{I}$.}
\smallskip
\newline\emph{(2) ({\it Indiscrete closure}) $I^{cl} := A$ for all $I \in \mathcal{I}$.}
\smallskip
\newline\emph{(3) We define the {\it radical closure} of an ideal $I \in \mathcal{I}$ as $I^{cl} := \sqrt I$ for all $I \in \mathcal{I}$ where $ \sqrt I$ is the intersection of all the prime ideals of $A$ containing $I$ or equivalently it is the set $\{x \in A\ |\ \exists\ n \in \mathbb{N} \textnormal{\ \ such that \ } x^n \in I \}.$}\smallskip
\newline \emph{It is easy to show that identity, indiscrete and radical closures are persistent on the category of all monoids and commutes with localization.}
\end{ex}
We now introduce some interesting closure operations on monoids like  Frobenius, tight, integral and saturation closure operations, which are inspired by such similar operations for rings in classical commutative algebra. We begin by introducing Frobenius closure of ideals over monoids. See, for instance, \cite{N} for details on Frobenius closure of ideals over rings. \smallskip
\newline Given an ideal $I$ of a monoid $A$, let $I^n:=\{\prod_{i=1}^nx_i~|~x_i\in I\}$ and $I^{[n]}:=\{ax^n~|~a\in A, x\in I\}$.
\begin{thm}
Let $I \in \mathcal{I}$. We set,
$$I^{Frob} := \{ x \in A\ |\ \exists \ n \in \mathbb{N} \textnormal{\ \ such that\ } x^n \in I^{[n]} \}$$ 
(1) This defines a closure operation on $\mathcal{I}$ and we call this the {\it Frobenius closure\footnote{We call this the ``Frobenius closure" closure because it is motivated by the usual Frobenius closure for rings of characteristic $p > 0$. If $R$ is a ring of characteristic $p>0$, the association $a \mapsto a^{p^e}$ ($e \in \mathbb{N}$) defines a ring endomorphism. Clearly, this does not hold in general for arbitrary $n \in \mathbb{N}$. However, since the additive structure does not arise in the case of monoids, we can consider here a Frobenius closure for any $n$.}} on $\mathcal{I}$. \smallskip
\newline(2) Frobenius closure is persistent on the category of all monoids and commute with localization.
\end{thm}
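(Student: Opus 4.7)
The plan is to verify the three closure axioms directly from the definition of $I^{Frob}$, then establish persistence by pushing forward the defining witness $x^n = ay^n$ through a monoid morphism, and finally handle localization by a clearing-of-denominators argument.

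For part (1), extension is immediate because $I$ is itself an ideal, so $I = I^{[1]}$ and every $x \in I$ satisfies the defining condition with $n = 1$. Order-preservation is clear from the monotonicity of $I \mapsto I^{[n]}$. Before idempotence one should check that $I^{Frob}$ is actually an ideal (closed under the $A$-action), which reduces to the identity $(ax)^n = a^n x^n$ keeping the power inside $I^{[n]}$. The one computation worth doing carefully is idempotence: given $x \in (I^{Frob})^{Frob}$ witnessed by $x^n = a y^n$ with $y \in I^{Frob}$, and in turn $y^m = b z^m$ with $z \in I$, one raises the first relation to the $m$-th power and substitutes the second to obtain $x^{nm} = a^m b^n z^{nm} \in I^{[nm]}$, giving $x \in I^{Frob}$; the reverse containment of idempotence follows from extension applied to $I^{Frob}$.

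For part (2), persistence is straightforward: any defining relation $x^n = a y^n$ with $y \in I$ maps under $\phi : A \to B$ to $\phi(x)^n = \phi(a)\,\phi(y)^n$, and multiplying through by $b^n$ shows that any product $\phi(x)\,b$ has its $n$-th power in $(\phi(I)B)^{[n]}$, since $\phi(y) \in \phi(I)B$. For commutation with localization, the inclusion $S^{-1}(I^{Frob}) \subseteq (S^{-1}I)^{Frob}$ is just persistence applied to the localization map $A \to S^{-1}A$, which lies in the ambient category.

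The main obstacle is the reverse inclusion $(S^{-1}I)^{Frob} \subseteq S^{-1}(I^{Frob})$. Here a witness $(x/s)^n = (a'/s')(y/t)^n$ in $S^{-1}A$ only yields, after unfolding the equivalence relation defining the localization, an equality of the form $c\,x^n = c'\,y^n$ in $A$ for some $c \in S$ and $c' \in A$ built from $s,s',t$; this is not yet of the desired shape $(\text{element})^n \in I^{[n]}$. The resolution is to multiply both sides by $c^{n-1}$, which gives $c^n x^n = (cx)^n$ on the left while the right-hand side stays in $I^{[n]}$ (since $I^{[n]}$ absorbs multiplication by $A$). Thus $cx \in I^{Frob}$, and since $c \in S$ we can rewrite $x/s = (cx)/(cs) \in S^{-1}(I^{Frob})$, which closes the argument.
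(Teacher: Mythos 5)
Your proof of part (1) and of persistence follows essentially the same path as the paper: for idempotence you chain two witnesses $x^n = ay^n$ and $y^m = bz^m$ to get $x^{nm} = a^m b^n z^{nm} \in I^{[nm]}$ (the paper writes this as $x^{nk} = a^k a'^n z^{nk}$), and for persistence you push the relation through $\phi$. Your remark that one should also check $I^{Frob}$ is an ideal is a reasonable hygiene point that the paper silently skips.

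Where you genuinely go beyond the paper is in the commutation with localization. The paper's proof establishes only the inclusion $S^{-1}(I^{Frob}) \subseteq (S^{-1}I)^{Frob}$ and then declares the statement proved, leaving the containment $(S^{-1}I)^{Frob} \subseteq S^{-1}(I^{Frob})$ unaddressed, even though the definition of ``commutes with localization'' requires an equality. You correctly single this out as the nontrivial direction and supply a valid argument: from a relation $(x/s)^n = (a'/s')(y/t)^n$ in $S^{-1}A$ you clear denominators to obtain $c\,x^n = c'\,y^n$ in $A$ with $c \in S$ and $y \in I$, then multiply by $c^{n-1}$ to get $(cx)^n = c^{n-1}c'\,y^n \in I^{[n]}$, so $cx \in I^{Frob}$ and $x/s = (cx)/(cs) \in S^{-1}(I^{Frob})$. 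This is exactly the right trick, and it closes a gap in the published argument. The only minor caveat is that you should verify $c$ really does land in $S$; unpacking the equality of fractions gives $c = us't^n$ with $u, s', t \in S$, so this is fine. Overall your proof is correct and slightly more complete than the paper's.
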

\begin{proof}
Since extension and order-preservation are obvious, it is enough to prove the idempotence condition for $(1)$. Take $x \in ({I^{Frob}})^{Frob}$. Then, $x^n \in ({I^{Frob}})^{[n]}$ for some $n \in \mathbb{N}$, i.e., $x^n = ay^n$ for some $a\in A$ and $y \in I^{Frob}$. Again this implies $y^k \in I^{[k]}$ for some $k \in \mathbb{N}$, i.e., $y^k = a'z^k$ for some $a'\in A$ and $z \in I$. Hence,
$x^{nk} = (x^n)^k = a^k(y^k)^n = a^k(a'z^k)^n = a^k a'^n z^{nk}$.
Therefore, $x^{nk} \in I^{[nk]}$ or in other words $x \in I^{Frob}$. The other inclusion is obvious.\smallskip
\newline Let $\phi$ and $S$ be as in Definition \ref{Def3.8}. Consider any $\phi(x)$ for $x \in I^{Frob}$. Then, $x^n \in I^{[n]}$ for some $n \in \mathbb{N}$. Thus, $x^n = ay^n$ for some $y \in I$ and $a \in A$. Hence, $ (\phi(x))^n = \phi(x^n) = \phi(ay^n) = \phi(a)\phi(y)^n \in (\phi(I)B)^{[n]}$. This shows persistence. Now, pick any $x/s \in S^{-1}(I^{Frob})$ where $x \in I^{Frob}$ and $s \in S$. Since $x \in I^{Frob}$, there exists some $n \in \mathbb{N}$ such that $x^n \in I^{[n]}$. Thus, $x^n = ay^n$ for some $a \in A$ and $y \in I$. Since, $(x/s)^n = x^n/s^n = ay^n/s^n = (a/1)(y/s)^n \in (S^{-1}I)^{[n]}$, we have $x/s \in (S^{-1}I)^{Frob}$. This proves (2).
\end{proof}
Tight closure of ideals over rings was introduced by M. Hochster and C. Huneke in 1986 \cite{HH}. Inspired by the same, we introduce tight closure of ideals over monoids. 
\begin{thm}
 Let $A$ be a Noetherian cancellative monoid and $I\in \mathcal{I}$. Then, $I$ must be finitely generated and let $\{y_1,\hdots,y_r \}$ denote a finite set of generators of $I$. We set,
$$ I^{tight} := \{x \in A\ |\ \exists\ 0 \neq a \in A \textnormal{\ such that\ } ax^n \in I^{[n]}\  \forall\  n\gg 0 \}.$$
(1) This defines a closure operation on $\mathcal{I}$ and we call this the tight closure operation.\smallskip
\newline(2)  Tight closure is persistent on the subcategory $\mathcal{M}$ of Noetherian cancellative monoids whose morphisms have zero kernel.
\end{thm}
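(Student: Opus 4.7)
The plan is to verify the three closure-operation axioms in part (1) and then deduce persistence in (2). Extension is immediate: for $x \in I$ take $a = 1$ (nonzero unless $A$ is the trivial monoid, a vacuous case), giving $1 \cdot x^n = x^n \in I^{[n]}$ for every $n$. Order-preservation is equally clear, since $I \subseteq J$ implies $I^{[n]} \subseteq J^{[n]}$, so any witness $(a, n_0)$ for $x \in I^{tight}$ also witnesses $x \in J^{tight}$. The inclusion $I^{tight} \subseteq (I^{tight})^{tight}$ then follows from extension applied to $I^{tight}$.

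The main content is the reverse inclusion for idempotence. I would start with $x \in (I^{tight})^{tight}$, producing $0 \neq a \in A$ and $n_0 \in \mathbb{N}$ with $ax^n \in (I^{tight})^{[n]}$ for all $n \geq n_0$. The Noetherian hypothesis yields a finite generating set $I^{tight} = \langle w_1, \ldots, w_t \rangle$, whence $(I^{tight})^{[n]} = \bigcup_{i=1}^t A \cdot w_i^n$, so for each $n \geq n_0$ one can pick an index $i_n$ and $d_n \in A$ with $ax^n = d_n w_{i_n}^n$. Since each $w_i \in I^{tight}$, there exist $0 \neq b_i \in A$ and $k_i \in \mathbb{N}$ with $b_i w_i^m \in I^{[m]}$ for all $m \geq k_i$. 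I would then set $b := b_1 b_2 \cdots b_t$ and $k := \max_i k_i$; cancellativity of $A$ ensures $b \neq 0$, since a product of nonzero elements in a cancellative monoid is nonzero (standard induction using the cancellation law). For $n \geq \max(n_0, k)$, writing $b = \bigl(\prod_{j \neq i_n} b_j\bigr) \cdot b_{i_n}$ gives $b \cdot w_{i_n}^n = \bigl(\prod_{j \neq i_n} b_j\bigr)(b_{i_n} w_{i_n}^n) \in I^{[n]}$ (using that $I^{[n]}$ is closed under $A$-multiplication), and therefore $(ab)x^n = d_n (b w_{i_n}^n) \in I^{[n]}$; since $ab \neq 0$, one concludes $x \in I^{tight}$.

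For part (2), let $\phi: A \to B$ be a morphism in $\mathcal{M}$, so $\phi(y) = 0$ forces $y = 0$. If $x \in I^{tight}$ with witness $0 \neq a$, then for $n \gg 0$ one has $ax^n = c_n y_n^n$ with $c_n \in A$ and $y_n \in I$. Applying $\phi$ gives $\phi(a)\phi(x)^n = \phi(c_n)\phi(y_n)^n \in (\phi(I) B)^{[n]}$; the zero-kernel hypothesis ensures $\phi(a) \neq 0$, so $\phi(x) \in (\phi(I) B)^{tight}$, whence $\phi(I^{tight}) B \subseteq (\phi(I) B)^{tight}$. The main obstacle is the idempotence step: a priori the element $z_n \in I^{tight}$ witnessing $ax^n \in (I^{tight})^{[n]}$ and its associated scalar may vary with $n$, so no single test element arises directly from the definition. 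The Noetherian hypothesis cuts this down to finitely many generators $w_1, \ldots, w_t$, after which cancellativity allows their finitely many test elements $b_1, \ldots, b_t$ to be combined into a uniform nonzero $b$ that works for all large $n$ simultaneously.
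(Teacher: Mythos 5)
Your proposal is correct, and it follows the same overall strategy as the paper: extension and order-preservation are immediate, idempotence is the substantive step and is handled by exploiting the Noetherian hypothesis to write $I^{tight} = \langle w_1,\ldots,w_t\rangle$ and $(I^{tight})^{[n]} = \langle w_1^n,\ldots,w_t^n\rangle$, and persistence follows by applying $\phi$ to the defining relation and using the zero-kernel hypothesis to keep the test element nonzero. However, your treatment of idempotence is more careful than the paper's at one point. The paper picks $x \in (I^{tight})^{tight}$, writes $ax^n = a'z_i^n$, and then invokes a test element $a''$ for ``the'' $z_i$, implicitly treating the chosen generator index and the resulting test element as independent of $n$ — but of course, which generator $z_i$ (and which scalar $a'$) realizes $ax^n$ can vary with $n$. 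You address this directly by introducing test elements $b_1,\ldots,b_t$, one for each generator of $I^{tight}$, and forming the product $b = b_1\cdots b_t$, which is nonzero by cancellativity and works uniformly: for any $n \geq \max(n_0,k)$, $bw_{i_n}^n = \bigl(\prod_{j\neq i_n} b_j\bigr)\bigl(b_{i_n}w_{i_n}^n\bigr) \in I^{[n]}$ no matter which index $i_n$ arose. This supplies the uniformity that the paper's argument leaves implicit, and it is precisely where the Noetherian and cancellative hypotheses are both essential: finiteness of the generating set makes the product finite, and cancellativity guarantees that a finite product of nonzero elements remains nonzero. Your part (2) matches the paper's argument essentially verbatim.
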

\begin{proof}
Note that any element of $I^{[n]}$ is of the form $ay^n$ where $a \in A$ and $y \in I$. Again, $y$ must be of the form $a'y_i$ for some $a' \in A$ and $y_i \in \{y_1,\hdots,y_r \}$. Henceforth, $I^{[n]} = \langle{y_1^n,\hdots, y_r^n }\rangle$. Let $\{z_1,\hdots,z_k \}$ be a finite set of generators of the ideal $I^{tight}$. Pick any $x \in (I^{tight})^{tight}$. Then, there exists $a \in A$ such that $ax^n \in \langle z_1^n,\hdots,z_k^n \rangle$ for all $n\gg 0$, i.e, $ax^n = a'z_i^n$ for some $a' \in A$ and for some $z_i \in \{z_1,\hdots,z_k \}$. Since $z_i \in I^{tight}$, there exists some $0 \neq a'' \in A$ such that $a''z_i^{n} \in \langle y_1^n,...,y_r^n \rangle$ for all $n\gg 0$. Therefore, $a''ax^n = a''a'z_i^n \in \langle y_1^n,...,y_r^n \rangle$ for all $n\gg 0$. Note that $a''a \neq 0$ since both $a'', a \neq 0$ and $A$ is a cancellative monoid. Thus, we have $x \in I^{tight}$. Since $I^{tight} \subseteq (I^{tight})^{tight}$ is obvious, we have $I^{tight} = (I^{tight})^{tight}$. This shows idempotence. \smallskip
\newline Let $\phi \in \mathcal{M}$. Consider any $\phi(x)$ for $x \in I^{tight}$. Then, there exists $0\neq a\in A$ such that $ax^n \in I^{[n]}\ \textnormal{forall}\ n\gg 0$.  This implies $\phi(a)\phi(x)^n=\phi(ax^n) \in \phi(I^{[n]}) \subseteq\phi(I^{[n]})B = (\phi(I)B)^{[n]} \ \textnormal{for all}\ n\gg 0$. But, $\phi(a)\neq 0$ by assumption. This shows persistence.\smallskip
\end{proof}

A classical study of integral closure of ideals over rings can be found, for instance, in \cite{HS}. We now introduce integral closure of ideals over monoids.
\begin{thm}
Let $I\in \mathcal{I}$. We set,
$$I^{int} := \{x \in A\ |\ \exists\ n \in \mathbb{N} \textnormal{\ \ such that \ } x^n \in I^n \}$$
(1) This defines a closure operation on $\mathcal{I}$ and we call this the {\it integral closure}.\smallskip
\newline(2) Integral closure is persistent on the category of all monoids and commutes with localization.
\end{thm}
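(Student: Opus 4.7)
My plan is to verify, in order, the three closure-operation axioms, then persistence, and finally the two inclusions in the localization identity. Extension is immediate from $x^{1} \in I^{1} = I$, and order-preservation is clear since $I_{1} \subseteq I_{2}$ forces $I_{1}^{n} \subseteq I_{2}^{n}$ for every $n$; these two dispatch in a single sentence.

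The content of axiom (iii) is the nontrivial inclusion $(I^{int})^{int} \subseteq I^{int}$. Starting from $x^{n} = y_{1} \cdots y_{n}$ with each $y_{i} \in I^{int}$, I pick $k_{i}$ with $y_{i}^{k_{i}} \in I^{k_{i}}$ and set $k$ to be a common multiple of the $k_{i}$. Then $y_{i}^{k} = (y_{i}^{k_{i}})^{k/k_{i}}$ expands as a product of $k$ elements of $I$, so $y_{i}^{k} \in I^{k}$, and raising $x^{n} = y_{1} \cdots y_{n}$ to the $k$-th power in the commutative monoid gives $x^{nk} = y_{1}^{k} \cdots y_{n}^{k} \in I^{nk}$, proving $x \in I^{int}$.

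For persistence, given $\phi \colon A \to B$ and $x \in I^{int}$ with $x^{n} = y_{1} \cdots y_{n}$, I compute $\phi(x)^{n} = \phi(y_{1}) \cdots \phi(y_{n}) \in (\phi(I)B)^{n}$; an arbitrary element $\phi(x)b$ of $\phi(I^{int})B$ then satisfies $(\phi(x)b)^{n} = b^{n}\phi(x)^{n} \in (\phi(I)B)^{n}$, using that $\phi(I)B$ is a $B$-ideal and hence $(\phi(I)B)^{n}$ is stable under multiplication by $B$.

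Persistence applied to the localization map $A \to S^{-1}A$ yields one inclusion $S^{-1}(I^{int}) \subseteq (S^{-1}I)^{int}$ directly (alternatively one checks $(x/s)^{n} = (y_{1}/s)\cdots(y_{n}/s) \in (S^{-1}I)^{n}$). For the reverse, take $x/s \in (S^{-1}I)^{int}$ so that $(x/s)^{n} = (z_{1}/s_{1}) \cdots (z_{n}/s_{n})$ with $z_{i} \in I$; the monoid-localization equivalence provides $t \in S$ with $t \prod s_{i} \cdot x^{n} = t s^{n} \prod z_{i}$. Setting $u := t \prod s_{i} \in S$, I obtain $(ux)^{n} = u^{n-1}(u x^{n}) = u^{n-1} t s^{n} \prod z_{i}$, which lies in $I^{n}$ because $\prod z_{i} \in I^{n}$ and $I^{n}$ is $A$-absorbing. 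Hence $ux \in I^{int}$ and $x/s = (ux)/(us) \in S^{-1}(I^{int})$. I expect this last localization step to be the main obstacle, since one must carefully invoke the equivalence relation on $S^{-1}A$ (noting that $A$ is not assumed cancellative, so the auxiliary factor $t$ is essential) and verify that the spurious factor $u^{n-1} t s^{n}$ can be absorbed into $I^{n}$ via $A \cdot I^{n} \subseteq I^{n}$.
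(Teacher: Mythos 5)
Your proof is correct and follows essentially the same route as the paper: the idempotence argument via a common power $k$ of the $k_i$, persistence via applying $\phi$ to $x^n\in I^n$, and the reverse localization via the monoid-localization equivalence relation and absorption $A\cdot I^n\subseteq I^n$ all match the paper's computations. The only (minor, and welcome) refinements are that you spell out why $(\phi(I)B)^{int}$ absorbs the extra factor $b$, and that you observe the forward localization inclusion falls out of persistence applied to $A\to S^{-1}A$; these tidy up points the paper leaves implicit.
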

\begin{proof}
Since extension and order-preservation are obvious, we only show idempotence for (1). The inclusion $I^{int} \subseteq (I^{int})^{int}$ follows immediately from extension. To show the other way, pick any $x \in (I^{int})^{int}$. Then, there exists some $n \in \mathbb{N}$ such that $x^n \in (I^{int})^n$. Thus,
$ x^n = \prod_{i=1}^n y_i$ for some $y_i \in I^{int}$. Now,
$y_i \in I^{cl} \implies y_i ^{k_i} \in I^{k_i}$ for some $k_i \in \mathbb{N}$. If we set, $k = \prod_{i=1}^n k_i$, then clearly $y_i^k \in I^k$ for all $y_i$. Therefore, $x^{nk} = \prod_{i=1}^n y_i ^{k} \in I^{nk}$ and so $x \in I^{int}$.\smallskip
\newline Let $\phi$ and $S$ be as in Definition \ref{Def3.8}. Consider any $\phi(x)$ for $x \in I^{int}$. Then, $x^n \in I^n$ for some $n \in \mathbb{N}$. Hence,
$ (\phi(x))^n = \phi(x^n) \in \phi(I^n) \subseteq \phi(I^n)B = (\phi(I)B)^n$. This shows persistence. Now, pick any $x/s \in S^{-1}(I^{int})$ where $x \in I^{int}$ and $s \in S$. Since $x \in I^{int}$, there exists some $n \in \mathbb{N}$ such that  $x^n \in I^n \subseteq I$. Thus, $(x/s)^n = x^n/s^n \in {S^{-1}I}$ which implies $x/s \in (S^{-1}I)^{int}$. Hence, $S^{-1}(I^{int}) \subseteq (S^{-1}I)^{int}$. For the other way, let $y/t \in (S^{-1}I)^{int}$ for $y \in A$ and $t \in S$. Then, $(y/t)^n \in (S^{-1}I)^n$ for some $n \in \mathbb{N}$. Thus,
$(y/t)^n = \prod_{i=1}^ny_i/t_i$ for some $y_i \in I$ and $t_i \in S$. Hence, there exists some $u \in S$ such that $uy^n\Big(\prod_{i=1}^n t_i\Big) = ut^n\Big(\prod_{i=1}^n y_i\Big)$. Therefore, $\Big(uy\Big(\prod_{i=1}^n t_i\Big)\Big)^n = u^ny^n\Big(\prod_{i=1}^n t_i^n\Big) = u^nt^n\Big(\prod_{i=1}^n y_i\Big)\Big(\prod_{i=1}^n t_i^{n-1}\Big) \in I^n$ as $y_i \in I$. Thus, $y/t = uy\Big(\prod_{i=1}^n t_i\Big)/ut\Big(\prod_{i=1}^n t_i\Big) \in S^{-1}(I^{int})$. This proves (2).
\end{proof}
See, for instance, \cite{N} for details on saturation closure of ideals over rings. Here we introduce saturation closure of ideals over monoids.
\begin{thm}
Fix a finitely generated ideal $\mathfrak{a} \in \mathcal{I}$. Then for any $I \in \mathcal{I}$, we set
$$ I^{\mathfrak{a}-sat} := \bigcup_{n \in N}(I : \mathfrak{a}^n) = \{ x \in A\ |\ \exists\ n \in \mathbb{N} \textnormal{\ \ such that \ } \mathfrak{a}^{n}x \subseteq I \}$$
(1) This defines a closure operation on $\mathcal{I}$ and we call this the $\mathfrak{a}$-{\it saturation} closure operation.\smallskip
\newline(2) Consider the subcategory $\mathcal{M}$ of Noetherian monoids whose morphisms are surjective\footnote{Note that the saturation closure with respect to the unique maximal ideal of each monoid is not persistent on the subcategory of Noetherian monoids and local morphisms without the assumption of surjection. For example, consider the inclusion $\{0,1\}\xrightarrow{i} A$ where $A$ is any Noetherian cancellative monoid. Then the saturation closure of the ideal $\langle 0\rangle$ in $\{0,1\}$ is $\{0,1\}$ and $i(\{0,1\}) = \{0,1\}$. But, the saturation closure of the ideal $\langle i(0)\rangle$ is just $\{0\}$ since $A$ is cancellative. } local morphisms.  Saturation closure with respect to the unique maximal ideal of each monoid is persistent on this category and commutes with localization.
\end{thm}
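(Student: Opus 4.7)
The plan is to verify the three closure axioms for part (1) and then exploit the rigid behavior of surjective local morphisms for part (2).

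For part (1), extension is clear: if $x \in I$ then $\mathfrak{a} x \subseteq I$ since $I$ is an ideal, and order-preservation is immediate from the definition. The substantive point is idempotence, and this is precisely where the hypothesis that $\mathfrak{a}$ is finitely generated is used. Suppose $x \in (I^{\mathfrak{a}-sat})^{\mathfrak{a}-sat}$, so $\mathfrak{a}^m x \subseteq I^{\mathfrak{a}-sat}$ for some $m$. Choosing generators $a_1,\ldots,a_k$ of $\mathfrak{a}$, the ideal $\mathfrak{a}^m$ is generated by the finite collection of monomials $p_1,\ldots,p_r$ of degree $m$ in the $a_i$. Each $p_j x$ lies in $I^{\mathfrak{a}-sat}$, yielding $n_j$ with $\mathfrak{a}^{n_j} p_j x \subseteq I$. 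Setting $N = \max_j n_j$, the claim is that $\mathfrak{a}^{m+N} x \subseteq I$: any product $a'_1\cdots a'_{m+N}\cdot x$ can be rewritten, using that $a'_1\cdots a'_m = c\, p_j$ for some $c \in A$ and some generator $p_j$, as an element of $\mathfrak{a}^N (p_j x) \subseteq I$. Hence $x \in I^{\mathfrak{a}-sat}$.

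For part (2), the starting observation is that a surjective local morphism $\phi: A \to B$ in $\mathcal{M}$ satisfies $\phi(m_A) = m_B$: locality gives one inclusion, while for the reverse any $b \in m_B$ has a preimage $a \in A$ by surjectivity, and $a \in A^{\times}$ would force $b = \phi(a) \in B^{\times}$, contradicting $b \in m_B$. Persistence then follows directly: if $x \in I^{m_A-sat}$ with $m_A^n x \subseteq I$, then $m_B^n \phi(x) = \phi(m_A)^n \phi(x) = \phi(m_A^n x) \subseteq \phi(I) \subseteq \phi(I)B$, so $\phi(x) \in (\phi(I)B)^{m_B-sat}$; since the right-hand side is an ideal of $B$, we conclude $\phi(I^{m_A-sat})\,B \subseteq (\phi(I)B)^{m_B-sat}$.

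For commutation with localization, the same identity applied to the localization map $\phi: A \to S^{-1}A$ (assumed to be in $\mathcal{M}$) gives $m_{S^{-1}A} = \phi(m_A)$. The inclusion $S^{-1}(I^{m_A-sat}) \subseteq (S^{-1}I)^{m_{S^{-1}A}-sat}$ is just persistence. For the reverse inclusion, I would take $x/s$ in the right-hand side and finite generators $a_1,\ldots,a_k$ of $m_A$ (available since $A$ is Noetherian), so that for each monomial $p$ of degree $n$ in the $a_i$ the relation $(px)/s \in S^{-1}I$ produces some $u_p \in S$ with $u_p p x \in I$; multiplying these finitely many $u_p$ gives a single $s_0 \in S$ for which $m_A^n(s_0 x) \subseteq I$, so $s_0 x \in I^{m_A-sat}$ and $x/s = (s_0 x)/(s_0 s) \in S^{-1}(I^{m_A-sat})$. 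The main obstacle is the bookkeeping in the idempotence step of (1), where finite generation of $\mathfrak{a}$ must be leveraged to produce a single $N$ that works uniformly across all generators of $\mathfrak{a}^m$; once this is in hand, both halves of (2) reduce to the structural identity $\phi(m_A) = m_B$.
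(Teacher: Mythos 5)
Your proof is correct and follows essentially the same route as the paper: idempotence via finite generation of $\mathfrak{a}^m$ (the paper phrases this as choosing a finite generating set of the ideal $\mathfrak{a}^{n_x}x$ and then takes a product of the $n_s$ rather than the maximum, but this is cosmetic), and both halves of (2) reduced to the identity $\phi(m_A)=m_B$ for surjective local morphisms, with Noetherianity used to clear denominators in the reverse localization inclusion exactly as the paper does.
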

\begin{proof}
We only show idempotence for (1). Take any $x \in {(I^{{\mathfrak{a}-sat}})}^{{\mathfrak{a}-sat}} $.
Then, there exists some $n_x \in \mathbb{N}$ such that $\mathfrak{a}^{n_x}x \subseteq I^{{\mathfrak{a}-sat}} =\bigcup_{n \in N}(I : \mathfrak{a}^n)$. 
Since $\mathfrak{a}$ is finitely generated, so is $\mathfrak{a}^{n_x}x$. Let $S$ denote a finite generating set of $\mathfrak{a}^{n_x}x$. Then, for each generator $s \in S \subseteq \mathfrak{a}^{n_x}x$, there exists $n_s \in \mathbb{N}$ such that $\mathfrak{a}^{n_s}s \subseteq I$. We set $N_S := \prod_{s \in S} n_s \in \mathbb{N}$. Then, for every $s \in S$, $\mathfrak{a}^{N_S}s \subseteq \mathfrak{a}^{n_S}s \subseteq I$.
Consequently, $\mathfrak{a}^{n_x +N_S}x \subseteq I$.
Hence, $x \in I^{{\mathfrak{a}-sat}}$. The other inclusion $I^{{\mathfrak{a}-sat}} \subseteq (I^{{\mathfrak{a}-sat}})^{{\mathfrak{a}-sat}}$ is obvious.\smallskip
\newline Consider any $x \in I^{{\mathfrak{a}-sat}}$. Then, there exists some $n \in \mathbb{N}$ such that $(m_A)^n x \subseteq I $. For any morphism $\phi:A \longrightarrow B$ in $\mathcal{M}$, we have, $\phi(m_A) = m_B$ where $m_B$ denote the maximal ideal of $B$. Hence, $(m_B)^n\phi(x)= \phi(m_A)^n\phi(x) = \phi((m_A)^nx) \subseteq \phi(I)$. This shows persistence. Let $m_A$ denote the maximal ideal of a monoid $A \in \mathcal{M}$. For an ideal $I$ of $A$, the $m_A$-{\it saturation} closure of $I$ is defined as
$\{ x \in A\ |\ \exists\ n \in \mathbb{N} \textnormal{\ \ such that \ } (m_A)^{n}x \subseteq I \}$. Consider any multiplicative set $S \subseteq A$ such that the localization $A \longrightarrow S^{-1}A$ is in $\mathcal{M}$. This implies that $S^{-1}(m_A)$ is the unique maximal ideal of $S^{-1}A$. Pick any $x/s \in S^{-1}(I^{{\mathfrak{a}-sat}})$ for $x \in I^{{\mathfrak{a}-sat}}$ and $s \in S$. Then, there exists some $n \in \mathbb{N}$ such that $(m_A)^{n}x \subseteq I$. Clearly, this implies $(S^{-1}(m_A))^n(x/s) \subseteq S^{-1}I$ and hence, $S^{-1}(I^{{\mathfrak{a}-sat}}) \subseteq (S^{-1}I)^{{\mathfrak{a}-sat}}$. To show the other way, take any $y/t \in (S^{-1}I)^{{\mathfrak{a}-sat}}$ for $y \in A$ and $t \in S$. Then, there exists some $n \in \mathbb{N}$ such that $(S^{-1}(m_A))^n(y/t) \subseteq S^{-1}I$. Since $A$ is Noetherian, $(m_A)^n$ is finitely generated and let $\{a_1,\hdots,a_k\}$ be a finite set of generators of $(m_A)^n$. Clearly, $a_i/1 \in (S^{-1}(m_A))^n$ and hence $(a_i/1)(y/t) \in S^{-1}I$. This implies that there exists some $s_i \in S$ such that $a_iys_i \in I$. Now, set $y' :=y(\prod_{i=1}^ks_i)$. It is easy to see that $(m_A)^ny' \subseteq I$. If $s:= \prod s_i$, then we have, $y/t = y'/st \in S^{-1}(I^{{\mathfrak{a}-sat}})$. 

\end{proof}

\section{Continuous valuation and Spectral spaces}\label{sec 4}
Other than rings, valuation has also been studied on ring-like objects like semirings (see, for instance \cite{JJ}, \cite{NP}). Here, we study valuation on monoids. Given any field $F$, the set of all valuation rings
having the quotient field $F$ is a spectral space (see, for instance, \cite[Example 2.2 (8)]{O}). As shown by Huber in \cite{Hu}, the valuation spectrum of any ring gives a spectral space and for an $f$-adic (or Huber) ring, the collection of continuous valuations gives a spectral space too (see also, \cite{Wd} and \cite{Con}). In this section, we prove similar results for monoids. We begin by briefly recalling totally ordered groups (resp. monoids).\medskip
\newline A totally ordered group (resp. totally ordered monoid) is a commutative
group (resp. commutative monoid) $\Gamma$ together with a total order $\leq$ on $\Gamma$ which is compatible with the group operation. 
A homomorphism of totally ordered groups (resp. totally ordered monoids) is an order preserving homomorphism of groups (resp. monoids).
Equivalently, a group homomorpism $f:\Gamma_1 \longrightarrow \Gamma_2$ is a homomorphism of totally ordered groups $\Gamma_1$ and $\Gamma_2$ if and only if for all $a \in \Gamma_1$ with $a \leq 1$, we have $f(a) \leq 1$.\smallskip
\newline Given a totally ordered abelian group (written multiplicatively) $\Gamma$, we can obtain a totally ordered monoid $\Gamma_* := \Gamma \cup \{0\}$ by adjoining a basepoint 0 to $\Gamma$ and extending the multiplication and the ordering of $\Gamma$ to $\Gamma_*$  by setting
$\gamma\cdot 0 = 0 \cdot \gamma = 0$ and $\gamma \geq 0$ for all $\gamma \in \Gamma$. From now onwards, $\Gamma_*$ will always denote this monoid corresponding to a given group $\Gamma$.

\begin{lem}\label{convex}
Let $\Gamma$ be a totally ordered group with $\gamma, \delta, \in \Gamma$. Then,\\
(1) $\gamma < 1 \iff \gamma^{-1} >1$\\
(2) $\gamma ,\delta \leq 1 \implies \gamma\delta \leq 1 \ \ ; \ \ \gamma <1, \delta \leq 1 \implies \gamma\delta <1$\ \ ;\ \
 $\gamma ,\delta \geq 1 \implies \gamma\delta \geq 1 \ \ ; \ \ \gamma >1, \delta \geq 1 \implies \gamma\delta >1$
 \smallskip
 \newline If $\Gamma'$ is a subgroup of $\Gamma$, then for all $\gamma,\delta,\eta \in \Gamma$, we have the following equivalent conditions: \smallskip
\newline$(i)$~ $\gamma \leq \delta \leq 1$ and $\gamma \in \Gamma'$ imply $\delta \in \Gamma'$.\\
$(ii)$~  $\gamma, \delta \leq 1$ and $\gamma\delta \in \Gamma'$ imply $\gamma,\delta \in \Gamma'$.\\
$(iii)$~ $\gamma \leq \delta \leq \eta$ and $\gamma, \eta \in \Gamma'$ imply $\delta \in \Gamma'$.\\
A subgroup $\Gamma'$ of $\Gamma$ satisfying one of these equivalent conditions is called {\it convex} .
\end{lem}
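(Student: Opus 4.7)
My plan is to handle the preliminary parts (1) and (2) directly from the compatibility of the total order with the group operation, and then to prove the equivalence of (i), (ii), (iii) via a cyclic chain of implications.

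For (1), I would multiply the inequality $\gamma<1$ on both sides by $\gamma^{-1}$; compatibility of the order with multiplication preserves strict inequalities under right-multiplication by a fixed element, yielding $1<\gamma^{-1}$, and the converse is symmetric. For (2), each statement follows by multiplying two hypotheses pointwise: e.g.\ $\gamma,\delta\leq 1$ gives $\gamma\delta\leq\gamma\cdot 1=\gamma\leq 1$, and the strict versions follow from the strict-preservation half of the compatibility axiom recorded in (1). These are routine and will not be the obstacle.

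The substantive content is the equivalence of (i), (ii), (iii), which I would establish cyclically as (i)$\Rightarrow$(ii)$\Rightarrow$(iii)$\Rightarrow$(i). For (i)$\Rightarrow$(ii): given $\gamma,\delta\leq 1$ with $\gamma\delta\in\Gamma'$, note that $\gamma\delta\leq\gamma\leq 1$ (multiply $\delta\leq 1$ by $\gamma$), so (i) applied to the pair $(\gamma\delta,\gamma)$ forces $\gamma\in\Gamma'$; symmetrically $\delta\in\Gamma'$. For (iii)$\Rightarrow$(i): specialize $\eta=1\in\Gamma'$ in condition (iii) and the statement of (i) is immediate.

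The main step, and the only one requiring a small trick, is (ii)$\Rightarrow$(iii). Given $\gamma\leq\delta\leq\eta$ with $\gamma,\eta\in\Gamma'$, I would factor the element $\gamma\eta^{-1}\in\Gamma'$ as a product of two elements each at most $1$ by setting $a:=\gamma\delta^{-1}$ and $b:=\delta\eta^{-1}$. By part (2) (applied to the inequalities $\gamma\leq\delta$ and $\delta\leq\eta$ after right-multiplying by the appropriate inverse) both $a,b\leq 1$, while $ab=\gamma\eta^{-1}\in\Gamma'$ because $\Gamma'$ is a subgroup. Applying (ii) gives $a,b\in\Gamma'$, and in particular $\delta=b\eta\in\Gamma'$, completing the cycle. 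I expect this factorization to be the only mildly non-obvious point of the argument; everything else amounts to manipulating inequalities and multiplying by inverses.
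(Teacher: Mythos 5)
Your argument is correct and complete. The paper itself does not supply a proof for this lemma—it simply writes ``Omitted'' and cites Wedhorn's \emph{Adic Spaces}, Remarks~1.6 and~1.7—so there is no in-paper proof to compare against. Your handling of (1) and (2) is the standard manipulation of the order-compatibility axiom (noting, as you do, that multiplication by a fixed element preserves strict inequalities because group elements are invertible). For the equivalence, your cycle $(i)\Rightarrow(ii)\Rightarrow(iii)\Rightarrow(i)$ is one of several equally natural orderings; the small trick you flag in $(ii)\Rightarrow(iii)$—factoring $\gamma\eta^{-1}=(\gamma\delta^{-1})(\delta\eta^{-1})$ as a product of two elements $\leq 1$ in $\Gamma'$—is exactly the step that makes that implication go through, and the rest is routine. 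This matches what the cited reference does in spirit, so nothing is missing.
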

\begin{proof}
Omitted. See, for instance, \cite[Remark {1.6} and Remark {1.7} ]{Wd}.
\end{proof}
Let $\Gamma$ be a totally ordered group and let $\Gamma'$ be a subgroup of $\Gamma$. An element $\gamma$ of $\Gamma \cup \{0\}$ is called {\it cofinal} for $\Gamma'$ if for all $\gamma' \in \Gamma'$ there exists $n \in \mathbb{N}$ such that $\gamma^{n} < \gamma'$.
\begin{lem}\label{4.2}
Let $\Gamma$ be a totally ordered group and $\Gamma' \subseteq \Gamma$ be a convex subgroup. Let $\gamma \in \Gamma$ be cofinal for $\Gamma'$ and let $\Delta \subsetneq \Gamma'$ be a proper convex subgroup. Then $\delta\gamma$ is cofinal
for $\Gamma'$ for all $\delta \in \Delta$.
\end{lem}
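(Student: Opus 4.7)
The plan is to extract a single ``witness'' element $\eta \in \Gamma' \setminus \Delta$ that sits strictly below everything in $\Delta \cap \Gamma_{\leq 1}$, and use it to bound the powers $\delta^n$ enough that the cofinality of $\gamma$ can be transferred through multiplication by $\delta$. First, since $\gamma$ is cofinal for $\Gamma'$ and $1 \in \Gamma'$, I get $\gamma < 1$. Because $\Delta \subsetneq \Gamma'$ is proper and $1 \in \Delta$, I can pick some $\eta \in \Gamma' \setminus \Delta$ with $\eta \neq 1$; swapping $\eta$ with $\eta^{-1}$ if necessary, I may assume $\eta < 1$.

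The central observation to establish is the inequality $\delta^n \eta < 1$ for every $n \in \mathbb{N}$. This rests on the convexity of $\Delta$: by Lemma \ref{convex}(i) applied with $\Delta$ in place of $\Gamma'$, if some $\epsilon \in \Delta$ with $\epsilon \leq 1$ satisfied $\epsilon \leq \eta$, the chain $\epsilon \leq \eta \leq 1$ would force $\eta \in \Delta$, a contradiction. Hence $\eta < \epsilon$ for every $\epsilon \in \Delta$ with $\epsilon \leq 1$. I then split on the sign of $\delta$: if $\delta \leq 1$, then $\delta^n \eta \leq \eta < 1$ trivially; if $\delta > 1$, then $\delta^{-n} \in \Delta$ with $\delta^{-n} < 1$, so the previous bound yields $\eta < \delta^{-n}$, i.e.\ $\delta^n \eta < 1$.

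To finish, take any $\gamma' \in \Gamma'$. Then $\eta\gamma' \in \Gamma'$, so cofinality of $\gamma$ supplies some $n \in \mathbb{N}$ with $\gamma^n < \eta\gamma'$. Multiplying by the positive element $\delta^n$ and using $\delta^n \eta < 1$ gives
\[
(\delta\gamma)^n \;=\; \delta^n\gamma^n \;<\; \delta^n\eta\,\gamma' \;<\; \gamma',
\]
which is exactly the cofinality of $\delta\gamma$ for $\Gamma'$. The main obstacle is the convexity bound $\delta^n\eta < 1$: it is the only step where the properness of $\Delta \subsetneq \Gamma'$ (used to produce $\eta$) and the convexity of $\Delta$ (used to push $\eta$ strictly below $\Delta \cap \Gamma_{\leq 1}$) are simultaneously essential, and without it nothing prevents the sequence $\delta^n$ from growing faster than the cofinal powers $\gamma^n$ can compensate.
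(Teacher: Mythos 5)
Your proof is correct. The paper in fact omits its own argument for Lemma~\ref{4.2}, referring the reader to \cite[Proposition~1.20]{Wd}, so there is no in-paper proof to compare against; your self-contained argument follows the standard route for that result. The essential mechanism is exactly the one your proof isolates: use properness of $\Delta\subsetneq\Gamma'$ to extract $\eta\in\Gamma'\setminus\Delta$ with $\eta<1$, use convexity of $\Delta$ to push $\eta$ strictly below every $\epsilon\in\Delta$ with $\epsilon\le 1$ (so that $\delta^n\eta<1$ for all $n$, regardless of whether $\delta\le 1$ or $\delta>1$), and then feed $\eta\gamma'$ rather than $\gamma'$ into the cofinality of $\gamma$ so that the leftover $\delta^n\eta$ is absorbed. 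Each step checks out, including the WLOG reduction $\eta<1$ (valid since $\Delta$ is a subgroup) and the final chain $(\delta\gamma)^n=\delta^n\gamma^n<\delta^n\eta\gamma'<\gamma'$. One cosmetic note: calling $\delta^n$ ``positive'' is additive-group language; in the multiplicative setting here, all you need is that multiplication by any element preserves the order, which is part of the definition of a totally ordered group. You also never invoke the convexity of $\Gamma'$ itself, only that it is a subgroup; this is fine, as that hypothesis is not needed for this particular lemma.
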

\begin{proof}
Omitted. See, for instance, \cite[Proposition 1.20]{Wd}.
\end{proof}
We now briefly recall valuation monoids from \cite{CHWW}. A monoid $A$ is called a {\it valuation monoid} if $A$ is cancellative and for every nonzero
element $\alpha \in A^{+}$ (the group completion of $A$), at least one of $\alpha$ or $1/\alpha$
belongs to $A$.
For a valuation ring $(A,+,\cdot)$, the underlying multiplicative monoid $(A,\cdot)$ is a valuation
monoid. Another simple example of a valuation monoid is the free pointed monoid on one generator.\smallskip 
\newline The set of units $A^{\times}$ of a given valuation monoid $A$ is a subgroup of $A^+ \setminus 0$ and the quotient group $(A^+ \setminus 0)/ A^{\times}$ is a totally ordered abelian group with the total ordering defined by $x \geq y$ if and only if $ x/y$ belongs to the image of $A \setminus 0$. We call the totally ordered pointed monoid $\Gamma  := ((A^+ \setminus 0)/ A^{\times})_*$ the {\it value monoid} of the valuation monoid $A$ . The canonical surjection
$ord: A^{+} \twoheadrightarrow \Gamma$
is called the {\it valuation map} of $A$ . The monoid $A$ can be identified with the set of $x$ in $A^{+}$ such that $ord(x)\leq 1$ where 1 represents the identity of $\Gamma$. Note that $ord(x) =1 $ only if $x$ is in $A^{\times}$. Thus, the maximal ideal $m_A$ of $A$ is $\{x \in A^{+}\ |\ ord(x) <1\}$.
Clearly, for all $x, y \in A^{+}$,
\begin{equation}\label{ord}
 ord(x) \geq 0\ \ ; \ \ ord(xy) = ord(x)ord(y)\ \ ; \ \  ord(x) = 0 \textnormal{\ \ if and only if \ } x=0
\end{equation}
Conversely, given an abelian group $G$, consider the monoid $G_*$ obtained by adjoining a basepoint $0$. Then, for a surjective morphism $ord: G_* \twoheadrightarrow \Gamma$ onto a totally ordered monoid $(\Gamma, \cdot,1,0)$  that satisfies all the conditions in (\ref{ord}), we get a valuation monoid given by $\{ g \in G\ | \ ord(g) \leq 1 \}$ whose
group completion is $G_*$ and whose associated valuation map is $ord$.
\begin{thm}\label{vmsp}
Let $A$ be a monoid contained in a monoid $G_*$, where $G_*$ is a monoid obtained by adjoining a basepoint $0$ to an abelian group $G$. The set of all valuation monoids containing $A$
and having the group completion $G_*$, is a spectral space.
\end{thm}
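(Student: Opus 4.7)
The plan is to apply Finocchiaro's criterion, Theorem~\ref{2.2}. Write $X$ for the set of valuation monoids $V$ with $A\subseteq V\subseteq G_*$ and with group completion $G_*$, and topologize $X$ by declaring the family
\[
\mathbb{S}\ :=\ \{\,U(x)\ |\ x\in G_*\,\},\qquad U(x)\ :=\ \{V\in X\ |\ x\in V\},
\]
an open subbasis. The $T_0$ axiom is immediate: if $V\ne V'$, some element of $G_*$ lies in one but not the other, and the corresponding $U(x)$ separates them. It remains to verify the ultrafilter condition of Theorem~\ref{2.2}$(ii)$.

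Given any ultrafilter $\mathcal U$ on $X$, I would set
\[
V_{\mathcal U}\ :=\ \{x\in G_*\ |\ U(x)\in\mathcal U\}
\]
and verify, in order, that (a) $V_{\mathcal U}$ is a submonoid of $G_*$; (b) $A\subseteq V_{\mathcal U}$; (c) $V_{\mathcal U}$ is itself a valuation monoid with group completion $G_*$, hence $V_{\mathcal U}\in X$; and (d) $V_{\mathcal U}\in X_{\mathbb S}(\mathcal U)$. Items (a) and (b) are routine: $0,1$ and every $a\in A$ lie in each $V\in X$, whence $U(0)=U(1)=U(a)=X\in\mathcal U$; and the inclusion $U(x)\cap U(y)\subseteq U(xy)$ combined with Definition~\ref{filter}(ii),(iii) yields closure under multiplication. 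Item (d) then follows tautologically, paralleling the argument for $N_{\mathcal U}$ in Proposition~\ref{3.2}: if $x\notin V_{\mathcal U}$ while $U(x)\in\mathcal U$, then by maximality $X\setminus U(x)\in\mathcal U$, so $\emptyset\in\mathcal U$, a contradiction.

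The genuinely non-formal step is (c), and this is where the maximality of $\mathcal U$ (rather than merely the filter axioms) enters. Cancellativity is inherited from $G_*$. For every nonzero $x\in G$, the defining property of a valuation monoid, combined with $V^+=G_*$, forces each $V\in X$ to contain at least one of $x$ and $x^{-1}$, so
\[
X\ =\ U(x)\cup U(x^{-1}).
\]
Since $X\in\mathcal U$ and $\mathcal U$ is an ultrafilter, at least one of $U(x),U(x^{-1})$ lies in $\mathcal U$; equivalently, at least one of $x,x^{-1}$ belongs to $V_{\mathcal U}$. This simultaneously supplies the valuation condition for $V_{\mathcal U}$ and shows that every element of $G$ is a quotient of elements of $V_{\mathcal U}\setminus\{0\}$, so $V_{\mathcal U}^+=G_*$. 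With (a)--(d) in hand, Theorem~\ref{2.2} concludes that $X$ is spectral. The main obstacle I expect is precisely the verification that $V_{\mathcal U}$ lies in $X$ (not merely in the ambient set of submonoids of $G_*$), and within that the identification $V_{\mathcal U}^+=G_*$, where ultrafilter maximality is indispensable.
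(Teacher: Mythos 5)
Your argument is a genuinely different route from the paper's. The paper embeds the set $X$ of valuation monoids into $2^{G_*}$ equipped with the hull-kernel topology (whose open subbasis is $\mathcal{D}(F)=\{R : F\nsubseteq R\}$), exhibits $X$ as an intersection of patch-clopen conditions, and concludes by citing that a patch in a spectral space is spectral. You instead apply Finocchiaro's criterion (Theorem~\ref{2.2}) directly, in the same spirit as Propositions~\ref{3.2} and~\ref{3.3}; the paper itself remarks (Remark~4.4) that the two methods are interchangeable for the results of Section~\ref{sec 3}. Your identification of the crux --- that $V_{\mathcal U}$ must be shown to lie in $X$, which uses ultrafilter maximality on the cover $X=U(x)\cup U(x^{-1})$ for $x\in G$ --- is exactly right, and the rest of the verification is routine and correct.

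There is, however, a mismatch you should be aware of: you declare $U(x)=\{V : x\in V\}$ to be subbasic \emph{open}, but the topology the paper's proof puts on $X$ is the subspace topology inherited from the hull-kernel topology on $2^{G_*}$, whose subbasic opens are the complements $\mathcal{D}(x)=\{V : x\notin V\}$. These two choices generate genuinely different topologies on $X$ (they are in fact inverse to one another), so strictly speaking you have proved the spectrality of $X$ in its inverse topology rather than in the topology of the paper. Your argument carries over to the paper's topology with only cosmetic changes: take $\mathbb{S}=\{\mathcal{D}(F)\cap X : F\subseteq G_*\text{ finite}\}$ as in Proposition~\ref{3.2}, define $V_{\mathcal U}:=\{x\in G_* : \mathcal{V}(x)\cap X\in\mathcal U\}$, and run the same verification of (a)--(d); the ultrafilter step for the valuation condition is identical since $X=\bigl(\mathcal{V}(x)\cap X\bigr)\cup\bigl(\mathcal{V}(x^{-1})\cap X\bigr)$. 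With that adjustment, your proof and the paper's establish the same statement by different means, the ultrafilter route being more hands-on and the patch route being shorter once the power-set machinery is set up.
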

\begin{proof}
For any set $S$, the power set $2^S$ endowed with the hull-kernel topology, which has as an open sub-basis the sets of the form $\mathcal{D}(F)=\{R\subseteq S~|~F\nsubseteq R\}$ where $F$ is a finite subset of $S$, is a spectral space (see,\cite[Theorem 8 and Proposition 9]{H}). The sets $\mathcal{D}(F)$ are quasi-compact and the complement of $\mathcal{D}(F)$ is denoted as $\mathcal{V}(F)=\{R\subseteq S~|~F\subseteq R\}$. Taking $S=G_*$, we see that $2^{G_*}$ is a spectral space. The set of all valuation monoids containing $A$
and having the group completion $G_*$, is clearly a subset of $2^{G_*}$ and it is given by
$$\Big(\big(\bigcap_{a\in A} \mathcal{V}(a)\big)\cap\big(\bigcap_{g_1,g_2 \in G_*}\mathcal{D}(g_1,g_2)\cap\mathcal{V}(g_1g_2)\big)\Big)\bigcap_{0\neq h\in G_*}\big(\mathcal{V}(h)\cup \mathcal{V}(h^{-1}\big).$$ It is easy to see that this is a closed set in the patch topology on $2^{G_*}$. Since any patch closed set of a spectral space is also spectral in the subspace topology (see, \cite[Proposition 9]{H} or \cite[Proposition 2.1]{O} ), we have the required result. 
\end{proof}
Note that by taking $A=\{0,1\}$ in Proposition \ref{vmsp}, we have that the set of all valuation monoids having the same group completion $G_*$ forms a spectral space.
\begin{rem}
It is worth mentioning that, like in Proposition \ref{vmsp}, using the property that a patch in a spectral space is also spectral, the results obtained in Section \ref{sec 3} can be given shorter proofs. For example, given a monoid $A$, the collection of all proper ideals of $A$ is a spectral space since it is given by the patch closed set, 
$X=D(1)\cap\Big(\bigcap_{a,b\in A} D(a)\cup V(ba)\Big)$
of $2^A$ and the collection of all
prime ideals, $MSpec(A)$, is a spectral space since it is given by the patch closed subset,
$X\cap\Big(\bigcap_{a,b\in A} D(ab)\cup V(a)\cup V(b)\Big)$
of $2^A$. This method of obtaining spectral spaces by expressing them as patches in the power set of some set, endowed with the hull-kernel topology, was used in \cite[Example 2.2]{O} for proving the ring theoretic versions of similar results. This method does not appear to extend beyond Section \ref{sec 3} of this paper.
\end{rem}
\subsection{Valuation spectrum of a monoid as a spectral space}
We begin this section by defining valuation on a monoid. 
\begin{defn}\label{val}
Let $A$ be a monoid and $\Gamma$ be a totally ordered abelian group written multiplicatively. Let $\Gamma_* = \Gamma \cup\{0\}$ denote the corresponding totally ordered monoid. We call a mapping $v: A \longrightarrow \Gamma_*$ is a valuation of $A$ with values in $\Gamma_*$ if the following conditions are satisfied:\smallskip
\newline $(i)~ v(xy) = v(x)v(y)$ for all $x,y \in A$\\
$(ii)~ v(0) = 0$, $ v(1) = 1$\smallskip
\newline We call the subgroup of $\Gamma$ generated by $im(v)\setminus \{0\}$ the {\it value group} of $v$ and we denote it by $\Gamma_v$.
\end{defn}
Note that for a valuation monoid $A$, the corresponding valuation map $ord$ as defined by the equation (\ref{ord}) does satisfy the definition of a valuation as in Definition \ref{val}.
\begin{ex}
\emph{(1) Let $A$ be a monoid and let $\mathfrak{p}$ be a prime ideal of $A$. Then, 
\[
    a \mapsto \left\{
                \begin{array}{ll}
                  1, \ \ \text{when }a \notin \mathfrak{p}\\
                  0, \ \ \text{when }a \in \mathfrak{p}
                \end{array}
              \right.
  \]
gives a valuation with value group 1. We call every valuation of this type a {\it trivial} valuation.}\smallskip
\newline\emph{(2) Restricting any valuation on a ring $(A,+,\cdot)$ to the underlying multiplicative monoid $(A,\cdot)$ gives a valuation on the monoid.}\smallskip
\newline\emph{(3) Let $\phi: G \longrightarrow \Gamma$ be a group homomorphism from an abelian group $G$ to a totally ordered abelian group $\Gamma$. Adjoining a basepoint 0 to both the groups, we obtain a valuation $v:G_* \longrightarrow \Gamma_*$ on the monoid $G_*$ given by 
$ v(g) := \phi(g) \textnormal{\ if \ } g \neq 0$ and
$ v(0) := 0$.}\smallskip
\end{ex}
\begin{defn}
{\it Let $v$ be a valuation on a monoid $A$ and let $\Gamma_v$ be its value group. Then the characteristic subgroup of $\Gamma_v$ is the convex subgroup (convex with respect to the total ordering of $\Gamma$) generated by the set $\{v(x) \in \Gamma_v\ |\ v(x) \geq 1\}$. We denote it as $c\Gamma_v$.}
\end{defn}
It is easy to check that $c\Gamma_v$ is the set of all $\gamma \in \Gamma$ such that $v(x)^{-1} \leq \gamma \leq v(x)$ for some $x \in A$ with $v(x) \geq 1$. 
\begin{ex}
\emph{(1) Let $A:= G_*$ be a monoid obtained from a group $G$. For any valuation $v$ on $A$, it is easy to check that $c\Gamma_v = \Gamma_v$.}\vspace{2mm}
\newline\emph{(2) Let $A$ be a valuation monoid and $v$ be the corresponding valuation map. Then, $\{v(x)\in \Gamma_v\ |\ v(x) \geq 1\} = \{1\}$ and hence $c\Gamma_v = 1$. For a trivial valuation on a monoid, we also have  $c\Gamma_v = 1$}.  
\end{ex}
\begin{lem}
Let $v$ be a valuation on a monoid $A$. Then, $supp(v):= v^{-1}(0)$ is a prime ideal.
\end{lem}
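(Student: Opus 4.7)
The plan is to verify directly the two requirements: that $supp(v)$ is a pointed subset closed under multiplication by $A$, and that its complement is multiplicatively closed and nonempty. Both follow from the axioms in Definition \ref{val} together with the fact that $\Gamma_*=\Gamma\cup\{0\}$ is built from a \emph{group} $\Gamma$, so the only zero divisor in $\Gamma_*$ is $0$ itself.

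First I would check that $supp(v)$ is an ideal. Since $v(0)=0$, the basepoint lies in $supp(v)$. For $x\in supp(v)$ and $a\in A$, multiplicativity gives $v(ax)=v(a)v(x)=v(a)\cdot 0=0$ in $\Gamma_*$ (by the definition of the extended multiplication on $\Gamma_*$), so $ax\in supp(v)$; hence $supp(v)\cdot A\subseteq supp(v)$.

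Next I would show that $supp(v)$ is prime. Properness is immediate because $v(1)=1\neq 0$, so $1\notin supp(v)$, whence $supp(v)\neq A$. For the multiplicative closure of the complement, take $x,y\in A\setminus supp(v)$, i.e.\ $v(x),v(y)\in\Gamma$. Since $\Gamma$ is a group, the product $v(x)v(y)$ is again in $\Gamma$, in particular nonzero in $\Gamma_*$. By multiplicativity, $v(xy)=v(x)v(y)\neq 0$, so $xy\notin supp(v)$. Thus $A\setminus supp(v)$ is closed under multiplication, and $supp(v)$ is prime.

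There is essentially no obstacle here: the argument is a direct unwinding of the valuation axioms and the absence of zero divisors in $\Gamma_*\setminus\{0\}=\Gamma$. The only point that deserves explicit mention is that the convention $\gamma\cdot 0=0$ on $\Gamma_*$, together with $\Gamma$ being a group, is exactly what forces $supp(v)$ to be an ideal and its complement to be multiplicatively closed at the same time.
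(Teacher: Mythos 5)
Your proof is correct and essentially follows the same route as the paper: use multiplicativity of $v$ together with the absorbing property of $0$ in $\Gamma_*$ to show $supp(v)$ is an ideal, and use the fact that $\Gamma$ is a group (hence $\Gamma_*\setminus\{0\}$ has no zero divisors) to get primeness. The only cosmetic differences are that you show the complement is multiplicatively closed directly while the paper argues the contrapositive (if $v(xy)=0$ and $v(y)\neq 0$ then $v(x)=v(x)v(y)v(y)^{-1}=0$), and you explicitly verify properness via $v(1)=1\neq 0$, which the paper leaves implicit.
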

\begin{proof} For any $x \in v^{-1}(0)$ and any $a \in A$, we have $v(ax) = v(a)v(x) = v(a)0 =0$. Hence $ax \in v^{-1}(0)$ and this shows that $v^{-1}(0)$ is an ideal. To further show that this ideal is prime, take $x,y \in A$ such that $xy \in v^{-1}(0)$. Then $v(xy) = v(x)v(y) = 0$. If $v(y) \neq 0$, then $v(x) = v(x)v(y)v(y)^{-1} = 0v(y)^{-1} = 0$. Hence, $supp(v)$ is a prime ideal.
\end{proof}
Since $supp(v)$ is a prime ideal, $A/supp(v)$ is torsion free and so we can consider the group completion of $A/supp(v)$. Clearly, the valuation $v$ on $A$ induces a valuation on its quotient $A/supp(v)$ and the inverse image of $0$ under this induced valuation reduces to 0. This again induces a valuation $\bar{v}:(A/supp(v))^{+} \longrightarrow \Gamma_*$ on the group completion $(A/supp(v))^{+}$ defined by $\bar{v}(\bar{a}/\bar{b}):= v(a)v(b)^{-1}$ where $\bar{a},\bar{b} \in A/supp(v)$ are the images of elements $a,b \in A$. It can be easily checked that $\bar{v}$ is well-defined by using the multiplicativity of $v$ and the fact that $\bar{a} = \{a\}$ for every non-zero $\bar{a} \in  A/supp(v)$. Note that $\Gamma_v = \Gamma_{\bar{v}}$.
\begin{lem}
Let $v$ be a valuation on a monoid $A$. For any $x,y \in (A/supp(v))^{+}$, $\bar{v}$ satisfies the following conditions:\smallskip
\newline $(i)~ \bar{v}(x) \geq 0$\\
$(ii)~\bar{v}(xy) = \bar{v}(x)\bar{v}(y)$\\
$(iii)~ \bar{v}(x) = 0$ if and only if $x=0$
\end{lem}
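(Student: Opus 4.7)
The plan is to verify each of the three properties directly from the definition $\bar{v}(\bar{a}/\bar{b}) := v(a)v(b)^{-1}$, using the fact already observed in the paragraph preceding the statement that $\bar{v}$ is a well-defined map $(A/\operatorname{supp}(v))^{+} \to \Gamma_*$. Recall that every nonzero element of the group completion $(A/\operatorname{supp}(v))^{+}$ can be written as $\bar{a}/\bar{b}$ with $\bar{a},\bar{b}$ both nonzero in $A/\operatorname{supp}(v)$, equivalently with $a,b \in A \setminus \operatorname{supp}(v)$.

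For (i), the inequality is immediate from the construction of $\Gamma_*$: the totally ordered monoid $\Gamma_*$ was defined by adjoining a basepoint $0$ to $\Gamma$ and declaring $\gamma \geq 0$ for all $\gamma \in \Gamma$. Since $\bar{v}$ takes values in $\Gamma_*$, the value $\bar{v}(x)$ is automatically $\geq 0$.

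For (ii), I write $x = \bar{a}/\bar{b}$ and $y = \bar{c}/\bar{d}$ in $(A/\operatorname{supp}(v))^{+}$ (or take $x=0$ or $y=0$, in which case both sides are $0$ and the identity is trivial). Then $xy = \overline{ac}/\overline{bd}$, and a direct computation gives
\begin{equation*}
\bar{v}(xy) = v(ac)\,v(bd)^{-1} = v(a)v(c)\,v(b)^{-1}v(d)^{-1} = \bar{v}(x)\bar{v}(y),
\end{equation*}
where the middle equality uses the multiplicativity clause (i) of Definition \ref{val} applied to $v$ on $A$, together with commutativity of $\Gamma_*$.

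For (iii), one direction is immediate: if $x = 0$, then in the fractional notation one may take the numerator to be $\bar{0}$, so $\bar{v}(x) = v(0)v(b)^{-1} = 0$ using $v(0)=0$. For the converse, suppose $x \neq 0$ in $(A/\operatorname{supp}(v))^{+}$; then $x = \bar{a}/\bar{b}$ with $a,b \notin \operatorname{supp}(v)$, so by definition of the support both $v(a)$ and $v(b)$ lie in $\Gamma$. Hence $\bar{v}(x) = v(a)v(b)^{-1} \in \Gamma$, and since $\Gamma$ is a group not containing the adjoined basepoint $0$, we have $\bar{v}(x) \neq 0$. The only mild subtlety, and thus the only real point to check with care, is that in applying the fractional formula one is using representatives that faithfully represent the element of the group completion; this is guaranteed by the well-definedness of $\bar{v}$ that was already established before the lemma, so no further work is required.
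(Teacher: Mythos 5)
Your proof is correct, and your treatment of (i) and (ii) matches the straightforward computations the paper leaves implicit. For the nontrivial half of (iii), however, your route differs from the paper's. You argue directly from representatives: a nonzero $x$ has the form $\bar{a}/\bar{b}$ with $a,b \notin \operatorname{supp}(v)$, so $v(a),v(b)$ lie in $\Gamma$ and hence $\bar{v}(x)=v(a)v(b)^{-1}\in\Gamma$ is nonzero. The paper instead argues by contradiction using only multiplicativity and invertibility in the group completion: if $\bar{v}(x)=0$ for some nonzero $x$, then $1=\bar{v}(1)=\bar{v}(xx^{-1})=\bar{v}(x)\bar{v}(x^{-1})=0$, which is absurd. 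Your version makes visible exactly where the support condition enters, at the cost of handling representatives explicitly (you rightly flag the well-definedness point); the paper's version is slightly slicker because once (ii) is in hand the contradiction needs no appeal to the fractional form at all. Both are sound and roughly the same length.
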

\begin{proof}
Let us only check that $x \neq 0$ implies $\bar{v}(x) \neq 0$. If possible, let there exists some $0 \neq x \in (A/supp(v))^{+}$ such that $\bar{v}(x) = 0$. Then,
$1 = \bar{v}(1) = \bar{v}(xx^{-1}) = \bar{v}(x)\bar{v}(x^{-1}) = 0$. This gives the required contradiction.

\end{proof}
As discussed in the beginning of $\S$ \ref{sec 4}, we have a valuation monoid $A(v):= \{x \in (A/supp(v))^{+}\ |\ \bar{v}(x) \leq 1\}$ corresponding to the valuation $\bar{v}$. Note that $\bar{v}(x)=1$ if and only if $x$ is invertible in $A(v)$. Hence, $m_{A(v)} := \{x \in (A/supp(v))^{+}\ |\ \bar{v}(x)< 1\}$ is the maximal ideal of $A(v)$.
\begin{thm}\label{equival}
Two valuations $v$ and $w$ on a monoid $A$ are
called equivalent if the following equivalent conditions are satisfied:\smallskip
\newline $(i)$ There exists an isomorphism of totally ordered monoids, $f:(\Gamma_{v})_* \longrightarrow (\Gamma_{w})_*$, such that $w= f \circ v$.\\
$(ii)~ supp(v) = supp(w)$ and $A(v) = A(w)$.\\
$(iii)~ v(x) \leq v(y)$ if and only if $w(x) \leq w(y)$ for all $x,y \in A$.\smallskip
\newline Note that $f$ in $(i)$ restricts to an isomorphism $\Gamma_{v} \longrightarrow \Gamma_{w}$ of totally ordered groups.
\end{thm}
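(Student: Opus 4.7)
I plan to prove the implications cyclically, $(i)\Rightarrow(iii)\Rightarrow(ii)\Rightarrow(i)$. The first is essentially definitional: if $f\colon(\Gamma_v)_*\to(\Gamma_w)_*$ is an isomorphism of totally ordered monoids with $w=f\circ v$, then both $f$ and $f^{-1}$ preserve order, so for all $x,y\in A$ we have $v(x)\le v(y)\iff f(v(x))\le f(v(y))\iff w(x)\le w(y)$.

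\textbf{From $(iii)$ to $(ii)$.} Every element of $\Gamma_*$ satisfies $\gamma\ge 0$, so $v(x)=0\iff v(x)\le v(0)$, and by $(iii)$ this is equivalent to $w(x)\le w(0)$, i.e.\ $w(x)=0$; thus $\mathrm{supp}(v)=\mathrm{supp}(w)=:\mathfrak{p}$. Any nonzero element of $(A/\mathfrak{p})^+$ can be written as $\bar a/\bar b$ with $a,b\in A\setminus\mathfrak{p}$, and
\[
\bar v(\bar a/\bar b)\le 1\iff v(a)\le v(b)\stackrel{(iii)}{\iff}w(a)\le w(b)\iff \bar w(\bar a/\bar b)\le 1,
\]
so the defining conditions of $A(v)$ and $A(w)$ coincide.

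\textbf{From $(ii)$ to $(i)$ and main obstacle.} This is the substantive step. Let $B:=A(v)=A(w)\subseteq(A/\mathfrak{p})^+$; as a valuation monoid, $B$ carries its own canonical valuation map $\mathrm{ord}\colon(A/\mathfrak{p})^+\twoheadrightarrow\Gamma_B$ from the discussion preceding Proposition~\ref{vmsp}. Since $B^\times=\{x:\bar v(x)=1\}$, the map $\bar v$ is constant on the fibres of $\mathrm{ord}$, so it descends to a unique monoid map $\tilde v\colon\Gamma_B\to(\Gamma_v)_*$ with $\tilde v\circ\mathrm{ord}=\bar v$; the analogous $\tilde w$ is built for $w$. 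The crux is to verify that $\tilde v$ (and symmetrically $\tilde w$) is an isomorphism of totally ordered monoids: injectivity, because distinct $B^\times$-cosets carry distinct $\bar v$-values; surjectivity, because $\bar v$ restricted to $(A/\mathfrak{p})^+\setminus 0$ already hits every element of $\Gamma_v$; order-compatibility, because on both sides the order is governed by the same condition $x/y\in B\setminus 0$. Granting this, $f:=\tilde w\circ\tilde v^{-1}$ is the required isomorphism, and for $x\in A$ one computes $f(v(x))=\tilde w(\mathrm{ord}(\bar x))=\bar w(\bar x)=w(x)$. The parenthetical closing assertion that $f$ restricts to an isomorphism $\Gamma_v\xrightarrow{\sim}\Gamma_w$ of totally ordered groups is then automatic, since a pointed monoid isomorphism must fix the basepoint $0$ and therefore bijects the groups of nonzero (equivalently, invertible) elements.
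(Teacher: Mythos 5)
Your proof is correct and follows essentially the same cyclic scheme $(i)\Rightarrow(iii)\Rightarrow(ii)\Rightarrow(i)$ as the paper, with the substantive step $(ii)\Rightarrow(i)$ resting on the same observation: $\bar v$ and $\bar w$ are surjective homomorphisms on $(A/\mathfrak{p})^+\setminus\{0\}$ whose kernels both equal $A(v)^\times=A(w)^\times$, so they factor through the same quotient. You spell out $(iii)\Rightarrow(ii)$ explicitly (the paper dismisses it as easy) and you route $(ii)\Rightarrow(i)$ through the value monoid $\Gamma_B$ and induced maps $\tilde v,\tilde w$, whereas the paper directly invokes the universal property of the quotient to produce $f$ with $f\circ\bar v=\bar w$ — but these are the same argument in slightly different packaging, not a genuinely different route.
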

\begin{proof}
Since homomorphism of totally ordered monoids preserves order, $(i)$ implies $(iii)$ obviously. It is also easy to see that $(iii)$ implies $(ii)$. Let us now assume $(ii)$. 
Let $\mathfrak{p}:= supp(v) = supp(w)$. Then, we have induced valuations $\bar{v}$ and $\bar{w}$ on $(A/\mathfrak{p})^+$. We claim that $\{x \in (A/\mathfrak{p})^+\ |\ \bar{v}(x) \geq 1\} = \{x \in (A/\mathfrak{p})^+\ |\ \bar{w}(x) \geq 1\} $.
Indeed, let $x \in (A/\mathfrak{p})^+$ such that $\bar{v}(x) \geq 1$. If $\bar{v}(x) > 1$, then $x \notin A(v) = A(w)$ and so $\bar{w}(x) >1$. Now, if $\bar{v}(x) = 1$, we claim that $\bar{w}(x)$ must also be 1. This is because if $\bar{w}(x) \neq 1$, then either $\bar{w}(x)>1$ or $\bar{w}(x) <1 $. If $\bar{w}(x)>1$, then $x \notin A(w) = A(v)$ and so $\bar{v}(x) >1$ which gives a contradiction. If $\bar{w}(x)<1$, then $x \in m_{A(w)} = m_{A(v)}$, i.e.,  $\bar{v}(x) <1$ which also gives a contradiction. Hence, $\bar{w}(x) = 1$ and this proves our claim.\smallskip
\newline This also shows that $\bar{v}$ and $\bar{w}$ restrict respectively to the surjective group homomorphisms
$$\bar{v}: (A/\mathfrak{p})^+ \setminus \{0\} \longrightarrow \Gamma_v \ \  \textnormal{and} \ \ \bar{w}: (A/\mathfrak{p})^+ \setminus \{0\} \longrightarrow \Gamma_w$$
with the same kernel. Thus, there exists a unique
group homomorphism $f :\Gamma_v \longrightarrow \Gamma_w$ such that $f \circ \bar{v} = \bar{w}$. It can be easily checked that $f$ is an isomorphism. Note that $f$ maps elements $\leq 1$ to elements $\leq 1$  and hence it is a homomorphism of totally ordered groups. We can
extend $f$ to $(\Gamma_{v})_*$ by setting $f(0) = 0 $. Finally, it is easy to see that $f \circ \bar{v} = \bar{w}$ implies $f \circ v = w $. This proves $(i)$.
\end{proof}
\noindent From now onwards, we will not differentiate between a valuation and its equivalence class. We are now all set to define the valuation spectrum of a monoid.
\begin{defn}
{\it Let $A$ be a monoid. The {\it valuation spectrum} $Spv( A)$ is the set of all
equivalence classes of valuations on $A$ equipped with the topology generated by the
subsets
$$Spv(A)\Big(\frac{x}{y}\Big):= \{v \in Spv(A)\ | \ v(x) \leq v(y) \neq 0 \}$$ where $x,y \in A$.
}
\end{defn}
We now proceed to show that $Spv(A)$ is a spectral space for which we first discuss some preparatory lemmas.
\begin{lem}\label{Step1}
Given two valuations $v \neq w $ in $Spv(A)$ there exists some open set $U = Spv(A)(\frac{x}{y})$ with $x,y \in A$ such that either $v \in U$ but $w \notin U$, or $v \notin U$ but $w \in U$. In other words, $Spv(A)$ is a $T_0$ space.
\end{lem}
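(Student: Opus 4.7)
The plan is to use the criterion from Theorem \ref{equival} that two valuations $v,w$ are equivalent iff $v(x)\le v(y) \Longleftrightarrow w(x)\le w(y)$ for all $x,y\in A$. Since by hypothesis $v\neq w$ as equivalence classes, this biconditional must fail for some pair, so we can find $x,y\in A$ (and, after possibly renaming $v\leftrightarrow w$ and/or swapping $x\leftrightarrow y$) such that
\[
v(x)\le v(y) \quad\text{but}\quad w(x)> w(y).
\]
I would then split into two cases based on whether $v(y)=0$ or not, since the subbasic open set $Spv(A)\bigl(\tfrac{x}{y}\bigr)$ requires the denominator-image to be nonzero.

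In the first case, suppose $v(y)\neq 0$. Then the inequality $v(x)\le v(y)\neq 0$ places $v$ inside $U:=Spv(A)\bigl(\tfrac{x}{y}\bigr)$, while the strict inequality $w(x)>w(y)$ immediately gives $w\notin U$. In the second case, suppose $v(y)=0$. Then $v(x)\le v(y)=0$ forces $v(x)=0$ as well (since values lie in $\Gamma_*$ with $0$ as the basepoint and $\gamma\ge 0$ for all $\gamma\in\Gamma$). On the other hand, $w(x)>w(y)\ge 0$ forces $w(x)\neq 0$, so $w(y)\le w(x)$ with $w(x)\neq 0$ shows $w\in U':=Spv(A)\bigl(\tfrac{y}{x}\bigr)$, whereas $v(x)=0$ shows $v\notin U'$.

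In either case we have produced a subbasic open set that contains exactly one of $v,w$, proving the $T_0$ axiom. The only subtlety — and the main thing to keep track of — is that the defining sets $Spv(A)(x/y)$ carry the extra condition $v(y)\neq 0$, so one cannot simply read off separation from a single inequality but must swap the roles of $x$ and $y$ in the degenerate case where the chosen denominator maps to $0$. Once this case split is handled, the argument reduces to a direct application of the equivalence $(i)\Leftrightarrow(iii)$ from Theorem \ref{equival}.
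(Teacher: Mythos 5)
Your proof is correct and follows essentially the same approach as the paper: invoke Proposition \ref{equival}$(iii)$ to obtain a pair $x,y$ with $v(x)\le v(y)$ but $w(x)>w(y)$, and split on whether $v(y)=0$ to handle the nonvanishing condition built into the sub-basic sets. The case analysis and the resolution via $Spv(A)\bigl(\tfrac{y}{x}\bigr)$ in the degenerate case match the paper's argument exactly.
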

\begin{proof}
Since $v$ and $w $ are two non-equivalent valuations on $A$, there exists $x,y \in A$ such that $v(x) \leq v(y)$ but $w(x) \nleq w(y)$ (or conversely) by Proposition \ref{equival}$(iii)$. If $v(y)\neq 0$, then there exists the open set $U=Spv(A)\Big(\frac{x}{y}\Big)$ such that $v\in U$ but $w\notin U$. Now, if $v(y)=0$, then we also have $v(x)=0$. Since $w(y)\lneq
 w(x)$, clearly $w(x)\neq 0$. Thus, there exists the open set $V= Spv(A)\Big(\frac{y}{x}\Big)$ such that $w\in W$ but $v\notin V$. This  finishes the proof.
\end{proof}
To the equivalence class $v$ of a valuation on $A$ we attach a binary relation $|_v$ on $A$ by defining $x|_v y$ if and only if $v(x) \leq v(y)$. This gives us a map, $$\phi: Spv(A) \longrightarrow P(A \times A)\ \ \ \ v \mapsto |_v$$ into the power set $P(A \times A)$ of $A \times A$. We identify $P(A \times A)$ with $\{0,1\}^{A\times A}$ and endow it with the product topology, where $\{0,1\}$ carries the discrete topology. Thus, $P(A \times A)$ is compact (by Tychonoff) and Hausdorff, since $\{0,1\}$ is compact Hausdorff.
 \begin{rem}
 \emph{Note that in the definition of $|_v$, we do not require $v(y) \neq 0$}. 
 \end{rem}
\begin{lem}\label{Step2}
The map $\phi: Spv(A) \longrightarrow P(A \times A)$, $v \mapsto |_v$ is an injection and a binary relation $|$ belongs to $im(\phi)$ if and only if $|$ satisfies the following axioms for all $a,b,c \in A$:\smallskip
\newline(1) $a|b$ or $b|a$\\
(2) If $a|b$ and $b|c$ then $a|c$\\
(3) If $a|b$ then $ac|bc$ \\ 
(4) If $ac|bc$ and $0 \nmid c$ then $a|b$\\
(5) $0 \nmid 1$
\end{lem}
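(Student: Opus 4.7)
The statement decomposes into three claims: injectivity of $\phi$, the fact that every $|_v$ satisfies axioms (1)--(5), and conversely that every relation $|$ satisfying these axioms arises as some $|_v$. Injectivity is immediate from Proposition \ref{equival}(iii): the relation $|_v$ records exactly the data of the preorder $v(x)\leq v(y)$ on $A$, and coincidence of these preorders is one of the listed characterizations of equivalence of valuations.

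Verifying that each $|_v$ satisfies (1)--(5) is routine. Axiom (1) is the totality of the order on $\Gamma_*$; (2) is transitivity; (3) follows because multiplication by $v(c)$ preserves the order on $\Gamma_*$ (which is part of Lemma \ref{convex}); (4) uses cancellation in the value group $\Gamma_v$, legitimate precisely because $v(c)\neq 0$; and (5) is the inequality $v(0)=0 \neq 1 = v(1)$ in $\Gamma_*$ read off from Definition \ref{val}.

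For the harder direction I would reconstruct a valuation $v$ from $|$ as follows. First extract basic consequences from the axioms: reflexivity $a|a$ is immediate from (1), while combining (1) and (5) gives $1|0$ and then (3) (with $a=1,b=0$) yields $c|0$ for every $c\in A$. Set $\mathfrak{p}:=\{a\in A\mid 0|a\}$. Axiom (3) shows $\mathfrak{p}$ is an ideal, and applying (4) to $ab=a\cdot b$ and $0=0\cdot b$ shows that it is prime; axiom (5) ensures $1\notin \mathfrak{p}$. Next introduce $a\sim b \iff a|b \text{ and } b|a$, which is an equivalence by (1) and (2). Axioms (3) and (4) guarantee that multiplication descends to the quotient $M:=A/\sim$, that the relation $|$ descends to a total order on $M$ compatible with the monoid operation, and that $M\setminus\{[0]\}$ is cancellative. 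The prime ideal $\mathfrak{p}$ collapses to the single class $[0]$, and the total order extends in the standard way to the group completion $\Gamma$ of the totally ordered cancellative monoid $(A\setminus\mathfrak{p})/\sim$, yielding a totally ordered abelian group. Defining $v(a):=[a]\in\Gamma$ for $a\notin\mathfrak{p}$ and $v(a):=0$ for $a\in\mathfrak{p}$ produces a valuation of $A$ with $|_v=|$ by construction.

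The principal obstacle is the sufficiency direction, and within it the two interlocking uses of axiom (4): first to show $\mathfrak{p}$ is prime, and second to obtain cancellativity on $M\setminus\{[0]\}$ so that the group completion $\Gamma$ exists. Extending the total order from $(A\setminus\mathfrak{p})/\sim$ to $\Gamma$ as a totally ordered abelian group is standard but requires a little care of the type already seen in Lemma \ref{convex}. Axioms (1)--(3) and (5) are essentially bookkeeping that enables these constructions; the essential algebraic content is carried by (4).
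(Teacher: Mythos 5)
Your proposal is correct and follows essentially the same route as the paper: injectivity via Proposition \ref{equival}(iii), routine verification of the axioms, and in the hard direction quotienting by $a\sim b \iff (a|b \text{ and } b|a)$, using axiom (4) for cancellativity, passing to the group of fractions, and reading off the valuation. You are somewhat more explicit than the paper in isolating $\mathfrak{p}=\{a : 0|a\}$ up front as a prime ideal and tracking which axiom does what, but the construction, its order of steps, and the role of axiom (4) as the crux are the same.
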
 
\begin{proof}
Let $w$ and $w'$ be two non-equivalent valuations on $A$. Then, there exists $x,y \in A$ such that $w(x) \leq w(y)$ but $w'(x) \nleq w'(y)$ (or conversely) by Proposition \ref{equival} $(iii)$. Thus, $|_w \neq |_{w'}$. This shows that $\phi$ is injective. 
\newline Obviously, the axioms stated in this lemma are satisfied by $|_v$ for any valuation $v$ on $A$. Let us now verify that the above axioms encode exactly the image of $\phi$. Fix such a binary
relation $|$ satisfying all the axioms and declare $a \sim b$ if $a|b$ and $b|a$. Let $[a]$ denote the equivalence class of $a$. Multiplication of equivalence classes can be defined via multiplication of representatives and can be verified to be well defined easily. If $ab \sim 0$ , then $0|ab$. Since $0 = 0a$, if $0\nmid a$, then by axiom (4) we get $0|b$. Hence, $ab \sim 0 $ if and only if either $a \sim 0$ or $b \sim 0$. Thus, multiplication of non zero equivalence classes gives a non zero equivalence class and therefore we get the unpointed monoid
$M :=\{ [a] \  | \ \ [a]\neq 0\}$. It is a commutative monoid with identity ( the class of 1 which is distinct from that of 0 by axiom (5)). By axiom (4), it is also cancellative. \vspace{2mm}
\newline Take distinct $[a]$ and $[b]$ from $M$. Then either $a|b$ or $b|a$ by axiom (1). If $a|b$, then we set $[a] \leq [b]$. This ordering is independent of the choice of representatives of the equivalence classes by axiom (2). Thus, $M$ is a totally ordered cancellative commutative monoid and we can create an abelian group corresponding to this monoid. Declare two points $(m_1,m_2)$ (think of it as $m_1/m_2$) and $(m_1',m_2')$ equivalent if $m_1m_2' = m_1'm_2$ where $m_1,m_2,m_1',m_2' \in M$. Clearly this relation is reflective and symmetric. Transitivity follows by axioms (3) and (4). Therefore, this gives an equivalence relation and we denote each equivalence class $(m_1,m_2)$ as a fraction $m_1/m_2$. We multiply two classes in the obvious way and  it is well-defined due to the cancellative property. If $\Gamma$ denote the set of equivalance classes, then it is clearly an abelian group with identity $1/1$ and $(m_1/m_2)^{-1} = m_2/m_1$.\vspace{2mm}
\newline Since $M$ is cancellative, the canonical morphism $M \longrightarrow \Gamma$ is an injection. We extend the total ordering from $M$ to $\Gamma$ in the usual way, i.e., $m_1/m_2 \leq m_1'/m_2'$ when $m_1m_2' \leq m_1'm_2$. This is well defined by axiom (3) and the cancellative property of $M$. We define,
$v: A \longrightarrow \Gamma \cup \{0\}$ by
\[
    a \mapsto \left\{
                \begin{array}{ll}
                  {[a]/1}, \ \ \textnormal{when \ } a \nsim 0\\
                  {[0]/1}, \ \ \textnormal{when \ } a \sim 0
                \end{array}
              \right.
  \]

It is easy to check that $v$ is a valuation and that $| = |_v$. Hence, the axioms listed
in the lemma entirely encodes the image of $\phi$.
\end{proof} 
\begin{lem}\label{Step3}
The image of $\phi : Spv(A) \longrightarrow P(A \times A),~ v\mapsto |_v$ is a closed compact subset of $P(A \times A)$.
\end{lem}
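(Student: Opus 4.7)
The plan is to show that $\operatorname{im}(\phi)$ is closed in $P(A\times A)$; compactness then follows immediately, since $P(A\times A)=\{0,1\}^{A\times A}$ is compact by Tychonoff's theorem (as already noted), and closed subsets of compact spaces are compact.

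By Lemma \ref{Step2}, $\operatorname{im}(\phi)$ is exactly the set of binary relations on $A$ satisfying the five axioms (1)--(5). So it is enough to show that each axiom defines a closed subset of $P(A\times A)$, because then $\operatorname{im}(\phi)$ is a finite intersection of closed sets. The key observation is that under the identification $P(A\times A)=\{0,1\}^{A\times A}$, for every fixed pair $(a,b)\in A\times A$ the set
\[
\{\,{|}\in P(A\times A) \ :\ a\,|\,b\,\} \;=\; \pi_{(a,b)}^{-1}(1)
\]
is clopen (preimage of a point under the continuous projection to the discrete space $\{0,1\}$), and similarly $\{\,{|}\ :\ a\nmid b\,\}$ is clopen.

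With this, each axiom is visibly a closed condition. For instance, axiom (1) asserts membership in
\[
\bigcap_{a,b\in A}\Bigl(\{a\,|\,b\}\,\cup\,\{b\,|\,a\}\Bigr),
\]
an intersection of clopen (hence closed) sets. Axiom (2) asserts membership in $\bigcap_{a,b,c}\bigl(\{a\nmid b\}\cup\{b\nmid c\}\cup\{a\,|\,c\}\bigr)$, axiom (3) in $\bigcap_{a,b,c}\bigl(\{a\nmid b\}\cup\{ac\,|\,bc\}\bigr)$, axiom (4) in $\bigcap_{a,b,c}\bigl(\{ac\nmid bc\}\cup\{0\,|\,c\}\cup\{a\,|\,b\}\bigr)$, and axiom (5) is the single clopen condition $\{0\nmid 1\}$. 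In every case, the condition on a fixed tuple $(a,b,c)$ is a finite Boolean combination of the clopen subbasic sets above, and the universal quantifier over $A$ (or $A^2$, $A^3$) becomes an intersection, which preserves closedness. So each axiom's locus is closed, and thus so is their common intersection $\operatorname{im}(\phi)$.

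There is no serious obstacle in this argument; the only thing requiring care is to write each axiom as a first-order $\forall$-statement in which, for each choice of parameters, only finitely many of the coordinates $a\,|\,b$ appear. Axiom (4) is the one to be most careful with, because of the hypothesis $0\nmid c$, but this still involves only the three coordinates $(ac,bc)$, $(0,c)$, and $(a,b)$, so it fits the pattern. Once closedness is established, compactness is automatic from the compactness of $P(A\times A)$.
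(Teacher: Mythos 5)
Your proof is correct and follows essentially the same approach as the paper: identify $P(A\times A)$ with $\{0,1\}^{A\times A}$, observe that for fixed parameters each axiom of Lemma \ref{Step2} is a finite Boolean combination of preimages of points under continuous projections (hence clopen), take the intersection over all parameters to get closedness, and conclude compactness from Tychonoff. The only difference is that you spell out all five axioms explicitly, whereas the paper only verifies axiom (2) and leaves the rest to the reader.
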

\begin{proof}
Each of the axioms (for fixed $a,b,c$) as stated in Lemma \ref{Step2} defines a closed subset of $P(A \times A)$. For example, let us see that axiom (2) defines a closed condition. Recall that $P(A \times A)$ can be identified with $\{0,1\}^{A \times A}$. Let $\pi_{a,b}: \{0,1\}^{A \times A} \longrightarrow \{0,1\}$ be  the projection onto the $(a,b)$-th component. Then the collection of elements of $\{0,1\}^{A \times A}$ satisfying axiom (2) is the union of $\pi_{a,c}^{-1}(1)$ and the complement of $\pi_{a,b}^{-1}(1) \cap \pi_{b,c}^{-1}(1)$. Clearly, this collection gives a closed set since $\{0,1\}$ has the discrete topology and $\{0,1\}^{A \times A}$ has the product topology, which makes the projection maps continuous.

Hence the image of $\phi$, being
the set of all binary relations $|$ on $A$ which satisfies the axioms in the Lemma \ref{Step2}, is closed. Since $P(A \times A)$ is compact, the closed set $im(\phi)$ of $P(A \times A)$ is also compact.
\end{proof}

\begin{Thm}\label{Hoch}
Let $X'$ be a quasi-compact topological space and let $\mathcal{U}$ be a collection of clopen sets of $X'$. Endow the set underlying $X'$ with the topology which has $\mathcal{U}$ as an open subbasis and call the resulting topological space $X$. Assume that $X$ is $T_0$. Then $X$ is a spectral space and the basis obtained from the subbasis $\mathcal{U}$ gives a basis of open quasi-compact
subsets of $X$. Moreover, $X_{patch} = X'$. Conversely, every spectral space arises from its patch space in this
way.

\end{Thm}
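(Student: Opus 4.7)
The plan is to verify spectrality of $X$ via the ultrafilter criterion (Theorem \ref{2.2}) using $\mathbb{S}:=\mathcal{U}$ as subbasis. The $T_0$ axiom is already given, so given an ultrafilter $\mathcal{F}$ on the underlying set of $X$ (equivalently of $X'$), I would use the quasi-compactness of $X'$ to produce a convergence point $x\in X'$ for $\mathcal{F}$ in the $X'$-topology (recall that in a quasi-compact space, every ultrafilter of subsets converges, via the standard finite-subcover argument applied to the opens $U_x$ avoiding $\mathcal{F}$). For each $U\in\mathcal{U}$, clopen-ness in $X'$ gives: if $x\in U$ then $U$ is an $X'$-open neighbourhood of $x$ and hence $U\in\mathcal{F}$; if $x\notin U$ then $X'\setminus U$ is an $X'$-open neighbourhood of $x$ and hence $X'\setminus U\in\mathcal{F}$, so $U\notin\mathcal{F}$. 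Thus $x\in X_{\mathcal{U}}(\mathcal{F})$, and $X$ is spectral.

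Once spectrality is in hand, Corollary \ref{2.3} shows $\mathcal{U}$ is a subbasis of quasi-compact opens of $X$; any finite intersection of elements of $\mathcal{U}$ is a finite intersection of clopens of $X'$, hence clopen (in particular closed) in the quasi-compact space $X'$, hence quasi-compact in $X'$, hence quasi-compact in the coarser topology $X$. This yields the required basis of quasi-compact opens closed under finite intersection.

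For $X_{\mathrm{patch}}=X'$, I would argue both inclusions. Any quasi-compact open $V$ of $X$ is covered by finitely many basic opens (finite intersections of elements of $\mathcal{U}$), each of which is clopen in $X'$; thus $V$ is a finite union of clopens of $X'$, hence clopen in $X'$. Since the patch subbasis consists of quasi-compact opens of $X$ together with their complements, all patch-subbasic sets are $X'$-open, so the identity map $X'\to X_{\mathrm{patch}}$ is continuous. But $X_{\mathrm{patch}}$ is Hausdorff (a standard feature of the patch topology on a spectral space) and $X'$ is quasi-compact; a continuous bijection from a quasi-compact space to a Hausdorff space is a homeomorphism, giving $X'=X_{\mathrm{patch}}$.

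For the converse, given a spectral space $Y$, set $X':=Y_{\mathrm{patch}}$ and take $\mathcal{U}$ to be the collection of quasi-compact open subsets of $Y$. Each such $U$ is open in $Y$ and hence in $Y_{\mathrm{patch}}$; moreover, quasi-compact opens belong to the closed subbasis of $Y_{\mathrm{patch}}$, so $U$ is patch-closed as well, i.e.\ clopen in $X'$. Since $Y$ is spectral, $\mathcal{U}$ is a basis of $Y$'s topology, so the topology generated by $\mathcal{U}$ as subbasis reproduces $Y$, and $X'$ is quasi-compact. The $T_0$ hypothesis for the reconstructed $X$ is inherited from $Y$, so the construction recovers $Y$ exactly. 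The main obstacle is the $X_{\mathrm{patch}}=X'$ identification, specifically showing that an arbitrary quasi-compact open of $X$ (not just a basic one) is clopen in $X'$; this hinges on observing that a quasi-compact subset admits a \emph{finite} subcover by basic clopens, so its union is again clopen.
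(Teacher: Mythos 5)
The paper does not actually give a proof of this theorem; it simply cites Hochster \cite[Proposition 7]{H} and Wedhorn \cite[Proposition 3.31]{Wd}. Your proposal supplies a genuine proof, and it is correct. The logical chain is: (1) an ultrafilter on the underlying set converges to some $x\in X'$ by quasi-compactness; (2) clopenness of each $U\in\mathcal{U}$ in $X'$ forces the biconditional $x\in U\iff U\in\mathcal{F}$, so $x\in X_{\mathcal{U}}(\mathcal{F})$ and Theorem~\ref{2.2} applies; (3) Corollary~\ref{2.3} plus the fact that finite intersections of clopens are closed in the quasi-compact space $X'$ yields the basis of quasi-compact opens; (4) any quasi-compact $X$-open is a finite union of basic clopens, hence $X'$-clopen, so $X'\to X_{\mathrm{patch}}$ is a continuous bijection from a quasi-compact space to a Hausdorff one and is therefore a homeomorphism; (5) the converse is read off from the definition of the patch topology. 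Each step checks out, including the subtle point that the finite-subcover argument is what upgrades ``quasi-compact open'' to ``$X'$-clopen.''

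What you do differently from the cited sources: Hochster's original Proposition~7 and Wedhorn's Proposition~3.31 verify spectrality either by checking Hochster's four topological axioms directly or by passing through Stone duality and Boolean algebras of clopens. You instead invoke Finocchiaro's ultrafilter criterion (Theorem~\ref{2.2}), which is already the workhorse of this paper, so your proof is both self-contained within the paper's toolkit and shorter than the classical arguments. The trade-off is that your route quietly relies on the (standard but unstated in the paper) equivalence ``quasi-compact $\iff$ every ultrafilter converges''; it would be worth flagging that one-line fact explicitly. Also, when you say the finite union of basic clopens ``is again clopen,'' it is worth stating that this is why the finite subcover of $V$ actually equals $V$: the basic opens in the cover can be chosen inside $V$, since they form a basis of $X$. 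With those two small clarifications made explicit, this is a clean, valid proof.
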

\begin{proof}
See for instance \cite[Proposition 7]{H} and \cite[Proposition 3.31]{Wd}. 
\end{proof}
We define $U(a,b) := \{v \in Spv(A)\ |\  a|_vb\}= \{v \in Spv(A)\ |\ v(a)\leq v(b)\}$. It follows immediately from the definition of $|_v$ that $b|_v0$ if and only if $v(b)=0$, i.e., $U(b,0)=\{v\in Spv(A)~|~v(b)=0\}$. Thus clearly by definition, $Spv(A)(\frac{a}{b}) = U(a,b) \cap (Spv(A) \setminus U(b,0))$.
\begin{Thm}\label{4.23}
Let $A$ be a monoid.\\
$(i)$ The valuation spectrum $Spv(A)$ is a spectral space with a basis of quasi-compact open subsets given by the subbasis consisting of sets of the form $Spv(A)\left(\frac{a}{b}\right)$ for $a,b \in A$. The Boolean algebra generated by the sets $Spv(A)\left(\frac{a}{b}\right)$ gives a basis for the patch topology on $Spv(A)$. 
\smallskip
\newline $(ii)$ Any monoid morphism $f: A \longrightarrow B$ induces a spectral map $$Spv(f): Spv(B) \longrightarrow Spv(A) \ \ \ \ \  v \mapsto v \circ f$$ We obtain a contravariant functor from the category of monoids to the category of spectral spaces and spectral maps.
\end{Thm}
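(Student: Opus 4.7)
My plan is to derive part (i) directly from Hochster's characterization in Theorem~\ref{Hoch}, using the three preparatory lemmas. I view $X' := \mathrm{im}(\phi) \subseteq P(A\times A)$ with the subspace topology induced by the product topology on $P(A\times A) \cong \{0,1\}^{A\times A}$. By Lemma~\ref{Step3}, $X'$ is closed and compact in $P(A\times A)$, hence itself quasi-compact (and Hausdorff). Via the injection of Lemma~\ref{Step2}, I identify $Spv(A)$ as a set with $X'$ and let $X$ denote the same underlying set topologised by the subbasis $\mathbb{U}:=\{Spv(A)(a/b):a,b\in A\}$; Lemma~\ref{Step1} already gives that $X$ is $T_0$.

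The key step is to verify that each subbasic set $Spv(A)(a/b)$ is clopen in $X'$. As noted immediately before the theorem, $Spv(A)(a/b) = U(a,b)\cap(Spv(A)\setminus U(b,0))$, so it suffices to check that every $U(c,d) = \{v : c|_v d\}$ is clopen in $X'$. But $U(c,d)$ is the preimage under $X' \hookrightarrow \{0,1\}^{A\times A}$ of $\pi_{c,d}^{-1}(1)$, where $\pi_{c,d}$ is the coordinate projection; since $\{0,1\}$ is discrete this preimage is clopen, and so is its intersection with the complement of $U(b,0)$. Theorem~\ref{Hoch} then applies and yields at once that $Spv(A)$ is spectral, that the finite intersections of sets in $\mathbb{U}$ form a basis of quasi-compact open subsets, and that $Spv(A)_{patch} = X'$. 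Because the patch topology has the quasi-compact opens and their complements as a subbasis and each element of $\mathbb{U}$ is clopen in $X_{patch}$, the Boolean algebra generated by $\mathbb{U}$ is a basis for the patch topology.

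For part (ii), given a monoid morphism $f:A\to B$, I first observe that for any $v\in Spv(B)$ the composition $v\circ f$ is multiplicative and sends $0\mapsto 0$, $1\mapsto 1$, so it satisfies Definition~\ref{val} and descends to equivalence classes; functoriality with $Spv(\mathrm{id}) = \mathrm{id}$ and $Spv(g\circ f) = Spv(f)\circ Spv(g)$ is immediate. To see that $Spv(f)$ is spectral, by part (i) it suffices to show the preimage of each subbasic set is quasi-compact open; a direct computation gives
\[
Spv(f)^{-1}\!\left(Spv(A)\!\left(\tfrac{a}{b}\right)\right) = \{v\in Spv(B) : v(f(a))\le v(f(b))\ne 0\} = Spv(B)\!\left(\tfrac{f(a)}{f(b)}\right),
\]
which is quasi-compact open again by part (i). The main conceptual work has already been absorbed into the preparatory lemmas — in particular the axiomatic encoding of $\mathrm{im}(\phi)$ in Lemma~\ref{Step2} — so I expect the only mild technical point to be the clopen identification above, and no genuine new obstacle beyond it.
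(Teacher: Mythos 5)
Your proposal is correct and takes essentially the same route as the paper: identify $Spv(A)$ with $\mathrm{im}(\phi)\subseteq\{0,1\}^{A\times A}$ via Lemma~\ref{Step2}, use Lemma~\ref{Step3} for quasi-compactness, show each $Spv(A)(a/b)=U(a,b)\cap(Spv(A)\setminus U(b,0))$ is clopen via the coordinate projections, invoke Lemma~\ref{Step1} for $T_0$, and apply Theorem~\ref{Hoch}. You in fact fill in two small steps the paper leaves implicit — the argument that the Boolean algebra generated by the $Spv(A)(a/b)$ is a basis of the patch topology, and the verification that $v\circ f$ is a valuation and that $Spv$ is functorial — but these do not constitute a different method.
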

\begin{proof}
$(i)$  Consider the preimage $\pi^{-1}_{a,b}\{1\}$ for the projection $P(A \times A)\cong \{0,1\}^{A\times A} \xrightarrow{\pi_{a,b}}\{0,1\}$ to the $(a,b)$-th component. It is easy to see that $\pi^{-1}_{a,b}\{1\}$ meets the image of $\phi : Spv(A) \hookrightarrow P(A \times A)$ in exactly $U(a,b)$. Thus, in other words, we have $U(a,b)= \pi^{-1}_{a,b}\{1\}\cap im(\phi)$. The projection $\pi_{a,b}$ is a continuous map to the discrete space $\{0,1\}$. Thus, $U(a,b)$, which is the preimage of the clopen set $\{1\}$, must be clopen in $im(\phi)$. This means $U(a,b)$ is clopen in $im(\phi)$ for any $(a,b) \in A \times A$. Hence, $Spv(A)\left(\frac{a}{b}\right) = U(a,b) \cap (Spv(A)\setminus U(a,0))$ is also clopen in $im(\phi)$. We set,
$ \mathcal{U} :=\{Spv(A)\Big(\frac{a}{b}\Big) \ | \ a,b \in A\}$. Endowing the set underlying $im(\phi)$ with the topology generated by $\mathcal{U}$, we obtain the topological space $Spv(A)$. Also, $Spv(A)$ is $T_0$ by Lemma \ref{Step1}. Hence, our claim follows by Theorem \ref{Hoch}.\smallskip
\newline $(ii)$ Let $f: A \longrightarrow B$ be a monoid morphism which induces the map $Spv(f): Spv(B) \longrightarrow Spv(A)$ defined by $v \mapsto v \circ f$. Since the sets of the form $Spv(A)\left(\frac{a}{b}\right)$ give a basis of open quasi-compact subsets of $Spv(A)$ and $Spv(f)^{-1}(Spv(A)\left(\frac{a}{b}\right)) = Spv(B)\left(\frac{\phi(a)}{\phi(b)}\right)$ is an open quasicompact subset of $Spv(B)$, we have, $Spv(f)$ is a spectral map.
\end{proof}
\subsection{Subspace of continuous valuations as a spectral space}
Let $x$ and $y$ be two points of a topological space $X$. Then, $x$ is called a {\it generization} of $y$ or that $y$ is called a {\it specialization} of $x$ if $y \in \overline{\{x\}}$.
\begin{lem}\label{4.24}
Let $v: A \longrightarrow \Gamma \cup \{0\}$ be a valuation on a monoid $A$. Let $H$ be a subgroup of $\Gamma_v$. We can define a map,
$$v|_H : A \longrightarrow H \cup \{0\}$$
by setting $v|_H(a) := v(a)$ if $v(a) \in H$ and $v|_H(a) := 0$ if $v(a) \notin H$.
If $H$ is convex and $c\Gamma_v \subseteq H$, then $v|_H$ is a valuation. Moreover, $v|_H$ is a specialization of $v$ in $Spv(A)$.
\end{lem}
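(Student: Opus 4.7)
My plan is to verify first that $v|_H$ satisfies the axioms in Definition 4.11 and then to check the specialization claim against the subbasis for $Spv(A)$.

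For the valuation axioms, $v|_H(0) = 0$ is immediate (since $0 \notin H$) and $v|_H(1) = v(1) = 1$ holds since $1 \in H$. The nontrivial task is multiplicativity $v|_H(xy) = v|_H(x) v|_H(y)$. If $v(x) = 0$ or $v(y) = 0$, both sides are $0$, so assume $v(x), v(y) \neq 0$. I would split into cases according to whether $v(x)$ and $v(y)$ lie in $H$. When both lie in $H$, so does their product, and the multiplicativity of $v$ yields what we want. When exactly one of $v(x), v(y)$ is outside $H$, I can write the other as a ratio $v(xy)/v(y) \in H$ (if $v(x) \notin H$), showing that $v(xy) \notin H$, so both sides are $0$. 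The decisive case is when both $v(x), v(y) \notin H$: here I use that $c\Gamma_v \subseteq H$ contains every $\gamma \in \Gamma_v$ with $\gamma \geq 1$, forcing $v(x), v(y) < 1$. Then $v(xy) = v(x)v(y) < v(x) \leq 1$ by Lemma \ref{convex}(2), and if $v(xy)$ were in $H$, the convexity chain $v(xy) \leq v(x) \leq 1$ would force $v(x) \in H$ via condition (i) of Lemma \ref{convex}, a contradiction. Hence $v|_H(xy) = 0 = v|_H(x)v|_H(y)$.

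For the specialization assertion, I would use the subbasis $\{Spv(A)(a/b)\}$ of $Spv(A)$: $v|_H$ specializes $v$ exactly when every subbasic open set containing $v|_H$ also contains $v$. So suppose $v|_H \in Spv(A)(a/b)$, i.e., $v|_H(a) \leq v|_H(b)$ and $v|_H(b) \neq 0$. The nonzero condition forces $v(b) \in H$ and $v|_H(b) = v(b)$. If $v(a) \in H$, then $v|_H(a) = v(a)$ and the inequality transfers directly so $v \in Spv(A)(a/b)$. If $v(a) \notin H$, suppose for contradiction that $v(a) > v(b)$. Then $v(a) \neq 0$, and $v(a) \notin H$ together with $c\Gamma_v \subseteq H$ forces $v(a) < 1$. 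But then $v(b) < v(a) < 1$ with $v(b) \in H$, and convexity (condition (i) of Lemma \ref{convex}) forces $v(a) \in H$, contradicting our assumption. Hence $v(a) \leq v(b)$ and $v \in Spv(A)(a/b)$, completing the proof.

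The main obstacle in both halves is the same: exploiting the interaction between the condition $c\Gamma_v \subseteq H$ (which guarantees that elements outside $H$ are strictly less than $1$) and convexity (which then sandwiches elements in or out of $H$). Once this dichotomy is in hand, the case analysis for multiplicativity and the specialization check both reduce to short arguments, and no other nontrivial properties of valuations beyond Lemma \ref{convex} are required.
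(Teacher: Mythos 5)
Your proof is correct and follows essentially the same strategy as the paper: establish multiplicativity of $v|_H$ by a case analysis driven by the observation that $c\Gamma_v \subseteq H$ forces every element of $\Gamma_v \setminus H$ to be strictly less than $1$, then invoke convexity (Lemma \ref{convex}), and verify specialization against the subbasic opens $Spv(A)\left(\frac{a}{b}\right)$. The only difference is cosmetic: you split the multiplicativity argument by the $H$-membership of the factors $v(x), v(y)$ (calling on condition (i) of Lemma \ref{convex} in the both-outside case), whereas the paper first reduces to the case $v(ab) \in H$ and then splits by whether each factor is $\geq 1$ or $<1$ (calling on condition (ii)); these are dual organizations of the same argument and prove exactly the same intermediate facts.
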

\begin{proof}
Clearly, $v|_H(0) = 0$ and $v|_H(1) = 1$. We only need to check that $v|_H(ab) = v|_H(a)v|_H(b)$ for any $a,b \in A$. Suppose $v|_H(ab) = 0$ or in other words $v(ab) \notin H$. But since $v(ab) = v(a)v(b)$, at least one of $v(a)$ and $v(b)$ must not belong to $H$. Thus, at least one of $v|_H(a)$ and $v|_H(b)$ must be $0$ which gives $v|_H(ab) = v|_H(a)v|_H(b)$. Now, let $v|_H(ab) = v(ab)$ or in other words $v(ab) \in H$. Then, to show $v|_H(ab) = v|_H(a)v|_H(b)$, we must show that both $v(a),v(b)\in H$. We have the following possible cases:\smallskip
\newline(1) If $v(a) \geq 1$, $v(b) \geq 1$, then $v(a),v(b) \in H$ since $c\Gamma_v \subseteq H$.\smallskip
\newline(2) If $v(a) <1 $, $v(b) < 1$, then $v(a),v(b) \in H$ since $v(ab) \in H$ (by Lemma \ref{convex}).\smallskip  
\newline(3) If $v(a) \geq 1$, $v(b) < 1$, then $v(a) \in H$ since $c\Gamma_v \subseteq H$ and hence $v(b) = v(ab)v(a)^{-1} \in H$.\smallskip
\newline To show that $v|_H$ is a specialization of $v$, it is enough to check that $v|_H(a) \leq v|_H(b)\neq 0$ implies $v(a) \leq v(b) \neq 0$ for $a,b \in A$. Clearly, $v|_H(b)\neq 0$ shows that $v(b) \in H$ and in particular $v(b) \neq 0$. If possible, assume $v(a)>v(b)$. Then, $v(a) \notin H$ and hence $v(a)<1$ as $H$ contains $c\Gamma_v$. Thus, $v(b)<v(a)<1$ and hence $v(a) \in H$ as $H$ is convex. This gives the required contradiction.
\end{proof}
\begin{lem}\label{4.25}
Let $v:A \longrightarrow \Gamma \cup \{0\}$ be a valuation on a monoid $A$. Let $H \subseteq \Gamma_v$ be a convex subgroup such that $c\Gamma_v \subsetneq H$. Then, $\mathfrak{I} := \{x \in A\ |\ v(x) \textnormal{\ is cofinal for}\ H\}$ is an ideal of $A$ and $\sqrt{\mathfrak{I}}= \mathfrak{I}$.
\end{lem}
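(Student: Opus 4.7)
The plan is to handle the two assertions separately: (a) $\mathfrak{I}$ is an ideal, and (b) $\sqrt{\mathfrak{I}} = \mathfrak{I}$. Since the inclusion $\mathfrak{I} \subseteq \sqrt{\mathfrak{I}}$ is immediate from the definition of radical, the radical claim reduces to showing that if $x^n \in \mathfrak{I}$, then $x \in \mathfrak{I}$. This is an easy observation: $v(x^n) = v(x)^n$ being cofinal for $H$ means that every $h \in H$ admits some $m \in \mathbb{N}$ with $v(x)^{nm} = (v(x)^n)^m < h$, so $v(x)$ itself is cofinal for $H$ (with witness $k = nm$), giving $x \in \mathfrak{I}$.

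For the ideal claim, I would first note that $0 \in \mathfrak{I}$ since $v(0) = 0$ is the minimum element of $\Gamma_*$ and hence trivially satisfies $0 < h$ for every $h \in H$. The substantive work is to show that $x \in \mathfrak{I}$ and $a \in A$ imply $ax \in \mathfrak{I}$, that is, that cofinality of $v(x)$ for $H$ forces cofinality of $v(ax) = v(a)v(x)$ for $H$. I would split on whether $v(a) \leq 1$ or $v(a) > 1$. In the first case (which includes $v(a) = 0$), one has $v(ax) \leq v(x)$, so $v(ax)^n \leq v(x)^n$ for every $n$, and cofinality transfers directly: for any $h \in H$, an $n$ that witnesses $v(x)^n < h$ also witnesses $v(ax)^n < h$.

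The interesting case is $v(a) > 1$: then by the very definition of $c\Gamma_v$ as the convex subgroup generated by $\{v(y) \mid v(y) \geq 1\}$, we have $v(a) \in c\Gamma_v$, and the standing hypothesis $c\Gamma_v \subsetneq H$ makes $c\Gamma_v$ a proper convex subgroup of the convex subgroup $H$ of $\Gamma_v$. Lemma \ref{4.2}, applied with $\Gamma' = H$, $\Delta = c\Gamma_v$, $\gamma = v(x)$, and $\delta = v(a)$, then yields that $v(a)v(x) = v(ax)$ is cofinal for $H$, which completes the verification that $ax \in \mathfrak{I}$.

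The main obstacle — really the only nontrivial point — is precisely this case $v(a) > 1$, where $v(ax) > v(x)$ means the cofinality of $v(x)$ does not obviously propagate to $v(ax)$. This is exactly what Lemma \ref{4.2} is designed for, and it also explains why the hypothesis is the strict containment $c\Gamma_v \subsetneq H$ rather than merely $c\Gamma_v \subseteq H$: strictness is needed to invoke the lemma with $c\Gamma_v$ as a \emph{proper} convex subgroup of $H$.
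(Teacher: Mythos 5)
Your proof is correct and takes essentially the same route as the paper: the same case split on $v(a)\leq 1$ versus $v(a)>1$, with the second case handled by the same application of Lemma~\ref{4.2} (using $\Gamma'=H$, $\Delta=c\Gamma_v$), and the radical claim handled by the same elementary cofinality argument, which the paper leaves as ``easy to see.''
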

\begin{proof}
Let $a \in A$ and $x \in \mathfrak{I}$. If $v(a) \leq 1$, then $v(ax) \leq v(x)$ and hence $ax \in \mathfrak{I}$. If $ v(a) >1$, then $v(a) \in c\Gamma_v \subseteq H$. Thus, $v(ax) = v(a)v(x)$ is cofinal for $H$ because $c\Gamma_v \subsetneq H$ (by Lemma \ref{4.2}). It is easy to see that $\sqrt{\mathfrak{I}}= \mathfrak{I}$.
\end{proof}
We call an ideal $I$ of $A$ {\it rad-finite} if there exists a finitely generated ideal $J$ of $A$ such that $\sqrt{I}=\sqrt{J}$.
\begin{lem}\label{4.26}
Let $I$ be a rad-finite ideal of $A$. If $v(I) \cap c\Gamma_v = \emptyset$, then there exists a greatest convex subgroup $H$ of $\Gamma_v$ such that $v(x)$ is cofinal for $H$ for all $x \in I$. Moreover, if $v(I) \neq \{0\}$, then $c\Gamma_v \subseteq H$ and $v(I) \cap H \neq \emptyset$.
\end{lem}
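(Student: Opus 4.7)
The plan is to first convert cofinality into the single containment condition $H' \subseteq H_{v(x)}$, where $H_{v(x)}$ denotes the smallest convex subgroup of $\Gamma_v$ containing $v(x)$ (explicitly $\{\gamma \in \Gamma_v\ |\ v(x)^n \leq \gamma \leq v(x)^{-n}\ \text{for some}\ n \in \mathbb{N}\}$), and then to build $H$ as an intersection of such subgroups.

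First I would note that the hypothesis $v(I) \cap c\Gamma_v = \emptyset$, together with the description of $c\Gamma_v$ as the union of intervals $[v(x')^{-1}, v(x')]$ over $x' \in A$ with $v(x') \geq 1$, forces $v(x) = 0$ or $v(x) < 1$ for every $x \in I$. For such $x$ with $v(x) \neq 0$ I would verify the criterion: $v(x)$ is cofinal for a convex subgroup $H' \subseteq \Gamma_v$ if and only if $H' \subseteq H_{v(x)}$. The forward direction is immediate, and the reverse uses Lemma \ref{convex}(iii) (if $h \in H' \setminus H_{v(x)}$ with $h < 1$, then $h < v(x)^n$ for all $n$, violating cofinality). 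When $v(x) = 0$ the cofinality condition is automatic. I would then define
\begin{equation*}
H := \bigcap_{x \in I,\ v(x) \neq 0} H_{v(x)}
\end{equation*}
(setting $H := \Gamma_v$ if $v(I) = \{0\}$); this is a convex subgroup because convex subgroups of a totally ordered group are linearly ordered by inclusion, and the criterion immediately identifies $H$ as the greatest convex subgroup of $\Gamma_v$ for which every $v(x)$, $x \in I$, is cofinal.

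For the \emph{moreover} part, assume $v(I) \neq \{0\}$. I would first show $c\Gamma_v \subseteq H$ by proving $c\Gamma_v \subseteq H_{v(x)}$ for every $x \in I$ with $v(x) \neq 0$: given $\gamma \in c\Gamma_v$, pick $x' \in A$ with $v(x') \geq 1$ and $v(x')^{-1} \leq \gamma \leq v(x')$; then $v(x) \notin c\Gamma_v$ forces $v(x) < v(x')^{-1}$ (else $v(x) \in [v(x')^{-1}, v(x')] \subseteq c\Gamma_v$), hence $v(x) \leq \gamma \leq v(x)^{-1}$, placing $\gamma$ in $H_{v(x)}$.

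Finally, to produce a nonzero element of $v(I) \cap H$, I would invoke rad-finiteness. Choose $J = \langle a_1, \ldots, a_r \rangle$ with $\sqrt{J} = \sqrt{I}$ and integers $n_i \geq 1$ with $a_i^{n_i} \in I$. For any $x \in I$ with $v(x) \neq 0$, write $x^k = \lambda a_i$, so $v(a_i) = v(x)^k v(\lambda)^{-1}$. I would then verify $v(a_i) \in H_{v(x)}$ by cases: if $v(\lambda) \leq 1$, then $v(x)^k \leq v(\lambda) \leq 1$ places $v(\lambda) \in H_{v(x)}$ by convexity; if $v(\lambda) > 1$, then $v(\lambda) \in c\Gamma_v \subseteq H_{v(x)}$ by the previous paragraph. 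This gives $H_{v(a_i)} \subseteq H_{v(x)}$. The finite chain $\{H_{v(a_j)} : v(a_j) \neq 0\}$ has a smallest element $H_{v(a_{i^*})}$, which therefore lies inside every $H_{v(x)}$ and hence inside $H$; then $v(a_{i^*}^{n_{i^*}}) = v(a_{i^*})^{n_{i^*}} \in H_{v(a_{i^*})} \subseteq H$ is a nonzero element of $v(I) \cap H$. The main obstacle is this last paragraph: the case $v(\lambda) > 1$ cannot be dealt with before $c\Gamma_v \subseteq H$ is in hand, so the order of the two \emph{moreover} claims is forced; and rad-finiteness is essential to prevent the chain $\{H_{v(x)}\}_{x \in I}$ from descending strictly past every $H_{v(a_j)}$, as could happen for valuations into $\bigoplus_k \mathbb{R}$ with the lex order.
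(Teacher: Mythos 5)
Your proof is correct, but it takes a genuinely different route from the paper's. The paper first reduces immediately to a finitely generated ideal $J$ with $\sqrt{I}=\sqrt{J}$, sets $h := \max\{v(t) : t \in T\}$ over a finite generating set $T$ of $J$, and defines $H$ as the convex subgroup \emph{generated by} $h$; maximality and $v(I)\cap H\neq\emptyset$ then come almost for free since $h = v(t)\in H$ by construction. You instead define $H$ \emph{from above}, as $\bigcap_{x\in I,\,v(x)\neq 0} H_{v(x)}$, using the clean criterion ``$v(x)$ cofinal for $H'$ iff $H' \subseteq H_{v(x)}$''; maximality is then immediate, while the content shifts to showing that this potentially infinite intersection doesn't collapse too far, which is where rad-finiteness enters. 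Your verification of this last point --- extracting from each $x$ some generator $a_{i(x)}$ with $H_{v(a_{i(x)})}\subseteq H_{v(x)}$, then taking the smallest of the finitely many $H_{v(a_j)}$ --- is correct, and the case split on $v(\lambda)$ (needing $c\Gamma_v\subseteq H_{v(x)}$ in the $v(\lambda)>1$ case) is handled carefully. What your approach buys: it isolates exactly where the finiteness hypothesis is load-bearing and makes the ``greatest'' claim transparent without picking a generator. What the paper's approach buys: it is more constructive and shorter, since choosing $h$ explicitly makes $v(I)\cap H\neq\emptyset$ trivial and yields $c\Gamma_v\subsetneq H$ (a strict inclusion, slightly more than the lemma demands) in one line via $h < c\Gamma_v < h^{-1}$.
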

\begin{proof}
If $v(I)=\{0\}$ we can choose $H = \Gamma_v$. So let us assume that $v(I) \neq \{0\}$ and hence $c\Gamma_v \neq \Gamma_v$. Note that for any subgroup $H$ of $\Gamma_v$, we have $v(I) \cap H = \emptyset \iff v(\sqrt{I}) \cap H = \emptyset$. This together with Lemma \ref{4.25}, shows that it is enough to prove the lemma for a finitely generated ideal $I$.  Let $T$ be
a finite set of generators of $I$ and let $H$ be the convex subgroup of $\Gamma_v$ generated by $h:= max\{v(t)\ |\ t\in T\}$. Since $v(I) \cap c\Gamma_v = \emptyset$, we must have, $h <1$ and thus $h$ is cofinal for $H$. Consequently, $v(t)$ is cofinal for $H$ for every $t \in T$.
Moreover, $v(I) \cap c\Gamma_v = \emptyset$ implies $v(x) < c\Gamma_v$ for all $x \in I$. For if $v(x)\geq 1$, then $v(x) \in c\Gamma_v$, giving a contradiction and if $\gamma \leq v(x) \leq 1$ for some $\gamma \in c\Gamma_v$, then $v(x) \in c\Gamma_v$ (as $c\Gamma_v$ is convex), giving a contradiction again. Hence, $v(x) < c\Gamma_v$ for all $x \in I$ and so in particular we have, $h < c\Gamma_v <h^{-1}$. Since $H$ is a convex subgroup generated by $h$, this implies $c\Gamma_v \subsetneq H$. Since $T$ generates $I$ and we have shown that $v(t)$ is cofinal for $H$ for all $t \in T$, it follows from Lemma \ref{4.25} that $v(x)$ is cofinal for $H$ for all $x \in I$.\smallskip
\newline Let $H'$ be another convex subgroup of $\Gamma_v$ such that $v(x)$ is cofinal for $H'$ for all $x \in I$. Then, in particular, $h$ is cofinal for $H'$ which implies that $H' \subseteq H$.
\end{proof}
Lemma \ref{4.26} allows us to define the following subgroup of $\Gamma_v$, for every valuation $v$ of a monoid $A$ and every rad-finite ideal $I$:
let $c\Gamma_v(I)$ be the group $c\Gamma_v$ if $v(I) \cap c\Gamma_v \neq 0$ and otherwise let  $c\Gamma_v(I)$ be the greatest convex subgroup $H$ of $\Gamma_v$ such that $v(x)$ is cofinal for $H$ for all $x \in I$. Note that by Lemma \ref{4.26}, $c\Gamma_v(I)$ is a convex subgroup of $\Gamma_v$ which always contains $c\Gamma_v$. Hence, by Lemma \ref{4.24}, $v|_{c\Gamma_v(I)}$ is a valuation.
\begin{lem}\label{4.29}
Let $v$ be a valuation of $A$ and $I$ be a rad-finite ideal of $A$. Then, the following conditions are equivalent:\\
$(i)~ \Gamma_v = c\Gamma_v(I)$. \\
$(ii)~ \Gamma_v = c\Gamma_v$ or $v(x)$ is cofinal in $\Gamma_v$ for every $x \in I$.\\
$(iii)~ \Gamma_v = c\Gamma_v$ or $v(x)$ is cofinal in $\Gamma_v$ for every element $x$ of a set of generators of $I$.
\end{lem}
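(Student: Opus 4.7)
The plan is to establish the cycle $(i)\Rightarrow (ii)\Rightarrow (iii)\Rightarrow (i)$. The implication $(ii)\Rightarrow (iii)$ is immediate, since any generating set of $I$ is contained in $I$, so cofinality of $v$ on all of $I$ restricts to cofinality on any generating set.

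For $(i)\Rightarrow (ii)$, I would split on the two branches of the definition of $c\Gamma_v(I)$ introduced just after Lemma \ref{4.26}. If $v(I)\cap c\Gamma_v\neq \emptyset$, then by definition $c\Gamma_v(I)=c\Gamma_v$, so $(i)$ forces $\Gamma_v=c\Gamma_v$, the first disjunct of $(ii)$. Otherwise $c\Gamma_v(I)$ is the greatest convex subgroup $H\subseteq \Gamma_v$ for which every $v(x)$, $x\in I$, is cofinal, and $(i)$ identifies this greatest $H$ with $\Gamma_v$; in particular $v(x)$ is cofinal in $\Gamma_v$ for every $x\in I$, which is the second disjunct of $(ii)$.

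The substantive direction is $(iii)\Rightarrow (i)$. If $\Gamma_v=c\Gamma_v$, then since we already noted that $c\Gamma_v\subseteq c\Gamma_v(I)\subseteq \Gamma_v$ always, we immediately get $c\Gamma_v(I)=\Gamma_v$. Assume instead $c\Gamma_v\subsetneq \Gamma_v$ and that $v(t)$ is cofinal in $\Gamma_v$ for every $t$ in a fixed generating set $T$ of $I$. The key step is to upgrade cofinality from $T$ to all of $I$: given $x\in I$, write $x=at$ with $a\in A$ and $t\in T$. If $v(a)\le 1$, then $v(x)^n\le v(t)^n$ for every $n$ and cofinality transfers trivially. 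If $v(a)>1$, then $v(a)\in c\Gamma_v$ by the very definition of the characteristic subgroup, and Lemma \ref{4.2} applied with $\Gamma'=\Gamma_v$, $\Delta=c\Gamma_v$ and $\gamma=v(t)$ yields that $v(x)=v(a)v(t)$ is again cofinal in $\Gamma_v$. The boundary cases $v(a)\in\{0,1\}$ are immediate. I expect this transfer, through the dichotomy $v(a)\le 1$ versus $v(a)>1$ and the invocation of Lemma \ref{4.2}, to be the main obstacle of the proof.

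It then remains to rule out the "first branch" of the definition of $c\Gamma_v(I)$ by showing $v(I)\cap c\Gamma_v=\emptyset$. If some nonzero $v(z)$ with $z\in I$ lay in $c\Gamma_v$, then on the one hand $v(z)$ is cofinal in $\Gamma_v$ by the preceding step; picking any $\gamma\in \Gamma_v\setminus c\Gamma_v$ with $\gamma<1$ (such a $\gamma$ exists since $c\Gamma_v$ is convex around $1$ and properly contained in $\Gamma_v$), we would have $v(z)^n<\gamma<1$ for some $n$, yet $v(z)^n\in c\Gamma_v$; convexity of $c\Gamma_v$ then forces $\gamma\in c\Gamma_v$, a contradiction. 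Hence we are in the "otherwise" branch of the definition; since $\Gamma_v$ is trivially a convex subgroup of itself and every $v(x)$, $x\in I$, is cofinal in $\Gamma_v$, maximality gives $c\Gamma_v(I)=\Gamma_v$, completing $(iii)\Rightarrow (i)$.
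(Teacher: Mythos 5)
Your proof is correct, but it uses a different decomposition and re-derives a lemma the paper simply cites. The paper argues $(i)\Leftrightarrow(ii)$ directly from the definition of $c\Gamma_v(I)$, and $(ii)\Leftrightarrow(iii)$ by invoking Lemma \ref{4.25}, which says precisely that $\mathfrak{I}=\{x\in A\ |\ v(x)\text{ is cofinal for }H\}$ is an ideal whenever $c\Gamma_v\subsetneq H$ is convex; taking $H=\Gamma_v$, an ideal containing a generating set of $I$ contains $I$, and the equivalence falls out. You instead run the cycle $(i)\Rightarrow(ii)\Rightarrow(iii)\Rightarrow(i)$, and the heart of your $(iii)\Rightarrow(i)$ --- writing $x=at$, splitting on $v(a)\le 1$ versus $v(a)>1$, and invoking Lemma \ref{4.2} in the second case --- is exactly the proof of Lemma \ref{4.25}, reproduced inline rather than cited. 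Functionally the two arguments are the same; yours is self-contained but longer, while the paper's is more modular. One thing your write-up does make explicit that the paper leaves implicit is the small convexity argument showing $v(I)\cap c\Gamma_v=\emptyset$ once every $v(x)$, $x\in I$, is cofinal in $\Gamma_v\supsetneq c\Gamma_v$; this is needed to land in the ``otherwise'' branch of the definition of $c\Gamma_v(I)$ before appealing to maximality, and it is good that you spelled it out. Everything checks out, including the degenerate cases $v(a)\in\{0,1\}$ and $v(I)=\{0\}$, which your argument handles correctly (if a bit redundantly, since $v(a)\in\{0,1\}$ is already subsumed under $v(a)\le 1$).
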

\begin{proof}
The equivalence of $(i)$ and $(ii)$ follows from the definition of $c\Gamma_v(I)$ and the equivalence of $(ii)$ and $(iii)$ follows from Lemma \ref{4.25}.
\end{proof}
Let $I$ be a rad-finite ideal of $A$. We set $Spv(A,I): = \{v \in Spv(A)\ |\ \Gamma_v = c\Gamma_v(I) \}$ and equip it with the subspace topology of $Spv(A)$. Note that we get the same subspace if we replace $I$ by any ideal $J$ such that $\sqrt{I} = \sqrt{J}$. We have the following map:
\begin{equation}\label{eq4.2}
r: Spv(A) \longrightarrow Spv(A,I)\ \ \ \ \ v \mapsto v|_{c\Gamma_v(I)}
\end{equation}
Clearly, $r(v) = v$ for every $v \in Spv(A,I)$ i.e., $r$ is a retraction.
\begin{lem}\label{4.30}
Let $\mathfrak{R}$ be the set of all subsets $U$ of $Spv(A,I)$ of the form:
$$U = \{ v \in Spv(A,I)\ |\ v(x_1) \leq v(y)\neq 0,\hdots, v(x_n)\leq v(y)\neq 0\}$$
where $y,x_1, \dots, x_n \in A $ with $I \subseteq \sqrt{\langle x_1,\hdots,x_n\rangle}$.
Then $\mathfrak{R}$ is a basis of the topology of $Spv(A,I)$ that is closed under finite intersections.
\end{lem}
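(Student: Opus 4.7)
I plan to verify three properties in turn: openness of each $U\in\mathfrak{R}$, closure of $\mathfrak{R}$ under binary intersection, and the basis property. The first is immediate, since such a $U$ is a finite intersection of subbasis open sets $Spv(A)(x_i/y)\cap Spv(A,I)$.

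For closure under intersection, given $U_1=\{v:v(x_i)\leq v(y)\neq 0\ \forall i\}$ and $U_2=\{v:v(x'_j)\leq v(y')\neq 0\ \forall j\}$ in $\mathfrak{R}$, I would define
$$W:=\{v\in Spv(A,I): v(x_ix'_j),\ v(x_iy'),\ v(x'_jy)\leq v(yy')\neq 0\ \text{ for all } i,j\}$$
and verify $W=U_1\cap U_2$: the forward inclusion is by multiplying the defining inequalities, while the reverse uses that $v(yy')\neq 0$ forces $v(y),v(y')\neq 0$ so cancellation in the totally ordered group $\Gamma_v$ recovers $v(x_i)\leq v(y)$ from $v(x_iy')\leq v(yy')$. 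The required radical condition $I\subseteq\sqrt{\langle x_ix'_j,x_iy',x'_jy\rangle}$ holds because, for $z\in I$ with $z^k=ax_{i_0}$ and $z^\ell=bx'_{j_0}$, we get $z^{k+\ell}=ab\,x_{i_0}x'_{j_0}$ lying in the new ideal; thus $W\in\mathfrak{R}$.

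For the basis property, closure under intersection reduces the task to refining a single subbasis element: given $v\in Spv(A)(x/y)\cap Spv(A,I)$, I would produce $U\in\mathfrak{R}$ with $v\in U\subseteq Spv(A)(x/y)$. Using rad-finiteness, fix generators $z_1,\ldots,z_r\in A$ of an ideal whose radical equals $\sqrt{I}$. By Lemma~\ref{4.29}, one of two (possibly overlapping) cases holds: (C2) $v(z)$ is cofinal in $\Gamma_v$ for every $z\in I$, or (C1) $\Gamma_v=c\Gamma_v$. In case (C2), Lemma~\ref{4.25} (applied with $H=\Gamma_v$, using $c\Gamma_v\subsetneq\Gamma_v$) extends cofinality from $I$ to $\sqrt{I}$, so each $v(z_i)$ is cofinal; choosing $n_i\in\mathbb{N}$ with $v(z_i)^{n_i}<v(y)$, the set $U:=\{w\in Spv(A,I):w(x),w(z_i^{n_i})\leq w(y)\neq 0\}$ is in $\mathfrak{R}$, contains $v$, and sits in $Spv(A)(x/y)$. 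In case (C1), $v(z_i)v(y)^{-1}\in c\Gamma_v$ whenever $v(z_i)\neq 0$, and the convex-subgroup description of $c\Gamma_v$ yields $a_i\in A$ with $v(a_i)\geq 1$ and $v(a_i)\geq v(z_i)v(y)^{-1}$. Setting $a:=\prod_i a_i$ (with $a_i=1$ when $v(z_i)=0$), one has $v(ay)\geq v(z_i)$ for all $i$ and $v(a)\geq 1$, so $U:=\{w\in Spv(A,I):w(xa),w(z_i)\leq w(ay)\neq 0\}$ lies in $\mathfrak{R}$ (generators $xa,z_1,\ldots,z_r$), contains $v$, and sits in $Spv(A)(x/y)$: the inequality $w(xa)\leq w(ay)$ with $w(a)\neq 0$ (forced by $w(ay)\neq 0$) permits cancellation in $\Gamma_w$ to give $w(x)\leq w(y)\neq 0$.

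The main technical obstacle is case (C1): no shrinking-power trick forces $v(z_i)\leq v(y)$ when $\Gamma_v=c\Gamma_v$, so one must extract an auxiliary element $a$ from the convex-subgroup description of $c\Gamma_v$ and multiplicatively adjust both numerator and denominator to produce a single uniform denominator $ay$ bounding every generator.
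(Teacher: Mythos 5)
Your proof is correct, and its overall shape mirrors the paper's: verify openness, establish closure under pairwise intersection by passing to the common denominator $yy'$, then handle the basis property by the two cases of Lemma~\ref{4.29}. The intersection step and the case $c\Gamma_v\neq\Gamma_v$ are essentially the same as the paper's (your use of individual exponents $n_i$ instead of a common $k$ is a harmless variant, and your explicit inclusion of the mixed products $x_iy'$ and $x'_jy$ is the cleaner way to write what the paper expresses with a slightly muddled index range).

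Where you diverge is the case $\Gamma_v=c\Gamma_v$. The paper uses a lighter trick: it picks a single $d\in A$ with $v(x_0d)\geq 1$ (possible since $v(x_0)\in c\Gamma_v$), rescales the original inequalities by $d$, and simply \emph{adjoins the constant $1$ to the list of numerators}; then the radical hypothesis $I\subseteq\sqrt{\langle 1,x_1d,\ldots,x_nd\rangle}=A$ is vacuous, and no reference to the generators $z_i$ of the rad-finite ideal is needed at all. Your route instead extracts one auxiliary $a_i$ from the convex-subgroup description of $c\Gamma_v$ for each generator $z_i$, forms $a=\prod a_i$, and keeps $z_1,\ldots,z_r$ as numerators under the denominator $ay$; that works, but it is heavier — it requires the rad-finite generators even in this case, and it uses the explicit characterization of $c\Gamma_v$ rather than the simpler observation that admitting $1$ as a numerator trivializes the radical constraint. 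Both arguments are valid; the paper's is shorter and avoids the generators of $J$ entirely in this branch, which is worth noting if you revise.
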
  
\begin{proof}
Clearly, each set in $\mathfrak{R}$ is open in $Spv(A,I)$ since it has the induced topology of $Spv(A)$. Let us now show that $\mathfrak{R}$ is closed under finite intersections. Let $x_0,\hdots,x_n$ and $y_0,\hdots, y_m$ be elements of $A$. Then,
$$\Big(\bigcap_{i=1}^n \{ v \in Spv(A)\ |\ v(x_i) \leq v(x_0) \neq 0\}\Big) \bigcap \Big(\bigcap_{j=1}^m \{ v \in Spv(A)\ |\ v(y_i) \leq v(x_0) \neq 0\}\Big) $$ 
$$ = \bigcap_{i=1,\hdots,n \ ; \ j=1,...,m} \{v \in Spv(A)\ |\ v(x_iy_j) \leq v(x_0y_0) \neq 0\}$$
and, if $I \subseteq \sqrt{\langle\{x_i|\ i= 0,...,n\}\rangle}$ and $I \subseteq \sqrt{\langle\{y_j|\ j= 0,...,m\}\rangle}$, then, 
$$I \subseteq \sqrt{\langle\{x_iy_j|\ i= 0,...,n\ ;\ j= 0,...,m\}\rangle}$$ Hence, $\mathfrak{R}$ is closed under finite intersections.\smallskip
\newline Now let us show that $\mathfrak{R}$ gives a basis for the topology on $Spv(A,I)$. Let $v \in Spv(A,I)$ and let $U$ be an open neighborhood of $v$ in $Spv(A)$. We choose $x_0,\hdots,x_n \in A$ such that $$v \in W:= \{w \in Spv(A)\ |\ w(x_i) \leq w(x_0) \neq 0 \textnormal{ \ for all \ } i=1,\hdots ,n\} \subseteq U$$
We can make this choice because the sets like $W$ form a basis for the topology of $Spv(A)$. Now we have two possible cases, $\Gamma_v = c\Gamma_v$ or $\Gamma_v \neq c\Gamma_v$. First assume $\Gamma_v = c\Gamma_v$. Since $v(x_0) \in c\Gamma_v$, there exists some $d \in A$ such that $v(x_0d) \geq 1$. Hence, 
$$v \in W':= \{w \in Spv(A)\ |\ w(x_id) \leq w(x_0d) \neq 0 \textnormal{ \ for all \ } i=1,\hdots ,n \textnormal{ and }w(1) \leq w(x_0d) \neq 0 \} \subseteq W$$ Thus, we have, $W' \cap Spv(A,I) \in \mathfrak{R}$. 
Next assume $\Gamma_v \neq c\Gamma_v$. Let $\{s_1,s_2, \hdots ,s_m\}$ be a finite set of generators of a finitely generated ideal $J$ for which $\sqrt{I}=\sqrt{J}$. By Lemma \ref{4.29}, $v(s_j)$ is cofinal in $\Gamma_v$ for all $j=1,\hdots,m$. Thus we can choose a $k\in \mathbb{N}$ such that $v(s_j^k) \leq v(x_0)$ for all $j=1,\hdots,m$. Hence,
$$v \in W':= \{w \in Spv(A)\ |\ w(x_i) \leq w(x_0) \neq 0  \textnormal{ and  } w(s_j^k) \leq w(x_0) \neq 0 \textnormal{\ for all\ }i,j \} \subseteq W$$
where $i=1,\hdots,n$ and $j=1,\hdots,m$. Thus, we have $W' \cap Spv(A,I) \in \mathfrak{R}$. This completes the proof.
\end{proof}
\begin{lem}\label{4.31}
Let $I$ be an ideal of a monoid $A$ and $y,x_1, \hdots, x_n \in A$ such that $I \subseteq \sqrt{\langle x_1,\hdots, x_n\rangle}$. For
$$U = \{ v \in Spv(A,I)\ |\ v(x_1) \leq v(y)\neq 0,\hdots, v(x_n)\leq v(y)\neq 0\}$$
$$W = \{ v \in Spv(A)\ |\ v(x_1) \leq v(y)\neq 0,\hdots, v(x_n)\leq v(y)\neq 0\}$$
we have, $r^{-1}(U) = W$ where $r$ is as in \eqref{eq4.2}. In particular, for any $v \in Spv(A)$ if $v(I) \neq 0$, then $r(v)(I) \neq 0$.
\end{lem}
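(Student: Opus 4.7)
My plan is to establish the two inclusions $W \subseteq r^{-1}(U)$ and $r^{-1}(U) \subseteq W$ separately, using throughout that $r(v) = v|_H$ for $H := c\Gamma_v(I)$, a convex subgroup of $\Gamma_v$ containing $c\Gamma_v$ (by Lemma~\ref{4.26} and the definition of $c\Gamma_v(I)$).

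For the inclusion $W \subseteq r^{-1}(U)$, I fix $v \in W$ and first argue that $v(y) \in H$. The easy case is $v(y) \geq 1$: then $v(y) \in c\Gamma_v \subseteq H$. The substantive case is $v(y) < 1$. Here I invoke the hypothesis $I \subseteq \sqrt{\langle x_1,\hdots,x_n\rangle}$ as follows. I may assume $v(I) \neq \{0\}$ (otherwise $H = \Gamma_v$ and the claim is trivial), so by Lemma~\ref{4.26} there exists $z \in I$ with $v(z) \in H$ and $v(z) \neq 0$. Writing $z^m = a x_i$ for suitable $m \in \mathbb{N}$, $a \in A$ and some index $i$, we obtain $v(a) v(x_i) = v(z)^m \in H$ with $v(a), v(x_i) \neq 0$. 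A short case split on whether $v(a) \leq 1$ or $v(a) > 1$ (in the latter case $v(a) \in c\Gamma_v \subseteq H$), combined with convexity of $H$, yields $v(x_i) \in H$. Then $v(x_i) \leq v(y) \leq 1$ with $v(x_i), 1 \in H$, and convexity gives $v(y) \in H$. Having established $v(y) \in H$, the required inequality $r(v)(x_i) \leq r(v)(y) \neq 0$ is immediate from the definition of $v|_H$, since $r(v)(x_i)$ equals either $v(x_i) \leq v(y)$ (when $v(x_i) \in H$) or $0$ (otherwise).

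For the reverse inclusion $r^{-1}(U) \subseteq W$, suppose $r(v) \in U$, so $v(y) \in H$ and $v(y) \neq 0$. For each $i$, if $v(x_i) \in H$ then $v(x_i) = r(v)(x_i) \leq r(v)(y) = v(y)$. If $v(x_i) \notin H$, then $v(x_i) \not\geq 1$ (else $v(x_i) \in c\Gamma_v \subseteq H$), and the convexity of $H$ together with $1 \in H$ and $v(x_i) \leq 1$ forces $v(x_i) < h$ for every $h \in H$, so in particular $v(x_i) < v(y)$. Either way $v(x_i) \leq v(y) \neq 0$, so $v \in W$. Finally, the ``in particular'' statement is a direct consequence of Lemma~\ref{4.26}: if $v(I) \neq 0$, then in both possibilities for $v(I) \cap c\Gamma_v$ there is some $z \in I$ with $v(z) \in H$ and $v(z) \neq 0$, so $r(v)(z) = v(z) \neq 0$.

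The main obstacle is the direction $W \subseteq r^{-1}(U)$ in the case $v(y) < 1$: one must combine $I \subseteq \sqrt{\langle x_1,\hdots,x_n\rangle}$ with Lemma~\ref{4.26} to transfer membership in $H$ from a well-chosen element of $I$ onto one of the generators $x_i$, and then deploy the convexity of $H$ a second time to propagate this to $v(y)$ itself.
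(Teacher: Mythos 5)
Your proof is correct, but the route is partly different from the paper's. For the inclusion $r^{-1}(U) \subseteq W$, the paper dispenses with it in one line by a topological shortcut: since $r(v)$ is a specialization of $v$ (Lemma~\ref{4.24}) and $W$ is open, any $v$ with $r(v) \in U \subseteq W$ must itself lie in $W$. You instead give a direct algebraic argument: $v(y) \in H$ forces each $v(x_i)$ (when nonzero) either to lie in $H$, giving the inequality by definition of $v|_H$, or to lie strictly below all of $H$ by convexity. Both work; the paper's is shorter since it reuses established structure, yours is self-contained and purely order-theoretic. For the substantive inclusion $W \subseteq r^{-1}(U)$ the two proofs use the same two ingredients (Lemma~\ref{4.26} giving a $z \in I$ with $v(z) \in H$, and the hypothesis $I \subseteq \sqrt{\langle x_1,\hdots,x_n\rangle}$ to connect $z$ with a generator $x_i$), but organize them differently. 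The paper first proves $r(w)(x_i) \leq r(w)(y)$ for all $i$ by a case split, and then shows $r(w)(y) \neq 0$ separately by contradiction: if $r(w)(y) = 0$ then all $r(w)(x_i) = 0$, hence $r(w)$ kills $I$, contradicting Lemma~\ref{4.26}. You go the other way: you first push $v(z) \in H$ forward to a single $v(x_i) \in H$ (by writing $z^m = ax_i$ and splitting on $v(a) \lessgtr 1$), then use convexity once more via $v(x_i) \leq v(y) \leq 1$ to conclude $v(y) \in H$, after which everything else is immediate. Your version pinpoints more explicitly where each hypothesis enters, at the cost of one extra case analysis; the paper's contradiction argument is a touch more economical. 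In short: same key lemmas, a genuinely different decomposition of one inclusion and a reordered, somewhat more explicit argument for the other.
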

\begin{proof}
Since $U \subseteq W$ and as every point $v \in r^{-1}(U)$ specializes to a point $r(v)$ of $U$, we have $r^{-1}(U) \subseteq W$ because $W$ is open in $Spv(A)$. \smallskip
\newline Conversely, take any $w \in W$. We want to show that $r(w) \in U$. If $w(I) =0$, then $c\Gamma_w(I) = \Gamma_w$ by definition and thus $r \in Spv(A,I)$. Hence, $r(w) = w$. Now assume $w(I) \neq 0$. We claim that $r(w)(x_i) \leq r(w)(y)$ for all $i=1,\hdots,n$. The inequality is obvious if $r(w)(x_i)=0$ i.e., when $w(x_i) \notin c\Gamma_w(I)$. If $w(x_i) \in c\Gamma_v(I)$, then $w(y)$ also belongs to $c\Gamma_v(I)$. This is because either $w(y) \geq 1$ in which case $w(y) \in c\Gamma_v(I)$ since $c\Gamma_v \subseteq c\Gamma_v(I)$, or $w(x_i)\leq w(y) \leq 1$ in which case $w(y) \in c\Gamma_v(I)$ since $c\Gamma_v(I)$ is convex. Thus, we have $r(w)(x_i) = w(x_i)\leq w(y) = r(w)(y)$. It
remains to show that $r(w)(y) \neq 0$. So, if possible, let $r(w)(y) = 0$. Then $r(w)(x_i) = 0$ for all $i=1,\hdots,n$. Since $I \subseteq \sqrt{\langle x_1,\hdots, x_n\rangle}$, this implies $r(w)(x) = 0$ for all $x\in I$. But by Lemma \ref{4.26}, $w(I) \neq 0 $ implies $w(I) \cap c\Gamma_w(I) \neq \emptyset$, i.e., there
exists some $x \in I$ such that $r(w)(x) \neq 0$. This gives the required contradiction.\smallskip
\newline Note that we have also proved that for any $v \in Spv(A)$ if $v(I) \neq 0$, then $r(v)(I) \neq 0$.
\end{proof}

\begin{Thm}\label{4.32}
The topological space $Spv(A,I)$ is spectral and the set $\mathfrak{R}$ (as in Lemma \ref{4.30}) forms a basis of quasi-compact open subsets of the topology, which is stable under
finite intersections. Moreover, the retraction $r: Spv(A) \longrightarrow Spv(A,I)$ (as in \eqref{eq4.2}) is a spectral map.
\end{Thm}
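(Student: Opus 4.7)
The plan is to apply Theorem \ref{Hoch} with the collection $\mathfrak{R}$ of Lemma \ref{4.30} playing the role of the subbasis $\mathcal{U}$. Lemma \ref{4.30} already tells us that $\mathfrak{R}$ is a basis of the topology of $Spv(A,I)$ closed under finite intersections, and $Spv(A,I)$ is $T_0$ as a subspace of the $T_0$ space $Spv(A)$ (by Theorem \ref{4.23}). What is missing is a quasi-compact refinement of this topology in which each $U \in \mathfrak{R}$ is clopen; the retraction $r$ of \eqref{eq4.2} is the tool to produce it.

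The refinement I would use is the topology $Y'$ on the set $Spv(A,I)$ generated by the subbasis $\mathfrak{R} \cup \{Spv(A,I) \setminus U : U \in \mathfrak{R}\}$. By Lemma \ref{4.31}, for each $U \in \mathfrak{R}$ one has $r^{-1}(U) = W$, where $W = \bigcap_{i=1}^n Spv(A)(x_i/y)$ is a finite intersection of the basic quasi-compact open sets of $Spv(A)$; in particular $W$ is quasi-compact open in $Spv(A)$, and hence clopen in $Spv(A)_{patch}$. Therefore $r^{-1}(U)$ and $r^{-1}(Spv(A,I) \setminus U) = Spv(A) \setminus W$ are both open in $Spv(A)_{patch}$, which shows that $r : Spv(A)_{patch} \longrightarrow (Spv(A,I), Y')$ is continuous on a subbasis and hence continuous. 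Since $r$ is surjective (being a retraction) and $Spv(A)_{patch}$ is quasi-compact, $(Spv(A,I), Y')$ is quasi-compact. Each $U \in \mathfrak{R}$ is clopen in $Y'$ by construction, and the topology that $\mathfrak{R}$ generates on $(Spv(A,I), Y')$ coincides with the subspace topology on $Spv(A,I) \subseteq Spv(A)$ by Lemma \ref{4.30}. Theorem \ref{Hoch} now yields that $Spv(A,I)$ is spectral and that $\mathfrak{R}$ is a basis of quasi-compact open subsets, stable under finite intersections.

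For spectrality of $r$, Lemma \ref{4.31} shows that $r^{-1}(U)$ is quasi-compact open in $Spv(A)$ for every $U \in \mathfrak{R}$; since every quasi-compact open subset of $Spv(A,I)$ is a finite union of elements of $\mathfrak{R}$ and preimage commutes with finite unions, $r$ sends quasi-compact opens to quasi-compact opens, i.e.\ $r$ is a spectral map. The main obstacle in this plan is arranging the quasi-compactness of $(Spv(A,I), Y')$; this is really the observation that the retraction $r$ is ``patch-continuous'', and this in turn falls directly out of Lemma \ref{4.31} once one uses that preimages commute with complements, so no delicate analysis of cofinality is needed at this final stage.
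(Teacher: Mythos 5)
Your proposal follows the paper's proof essentially step for step: using Lemma \ref{4.31} to see that $r$ is patch-continuous, transporting quasi-compactness from $(Spv(A))_{patch}$ via the surjection $r$, and then invoking Theorem \ref{Hoch} with $\mathfrak{R}$. Your auxiliary topology generated by $\mathfrak{R}$ together with the complements of its members is the same as the paper's topology generated by the Boolean algebra $\overline{\mathfrak{R}}$, and the argument for spectrality of $r$ (preimages of the basic sets in $\mathfrak{R}$ are quasi-compact open, hence preimages of all quasi-compact opens are) is exactly what Lemma \ref{4.31} is invoked for; the only blemish is the slip ``$r$ sends quasi-compact opens to quasi-compact opens'' where you mean $r^{-1}$ does.
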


\begin{proof}
Let $\overline{\mathfrak{R}}$ denote the Boolean algebra of subsets of $Spv(A,I)$ generated by $\mathfrak{R}$. Let $X$ denote the set underlying $Spv(A,I)$. Endow $X$ with the topology generated by $\overline{\mathfrak{R}}$. By Lemma \ref{4.31} and Theorem \ref{4.23}(i), we see that $r: (Spv(A))_{patch} \longrightarrow X$ is continuous. Since $(Spv(A))_{patch}$ is quasi-compact (see, for instance,\cite[Theorem 1]{H}) and $r$ is surjective, $X$ is quasi-compact. Since $X$ has the topology generated by $\overline{\mathfrak{R}}$, every set of $\mathfrak{R}$ is clopen in $X$. By Lemma \ref{4.30}, endowing $X$ with the topology generated by $\mathfrak{R}$ gives the topological space $Spv(A,I)$. Since $Spv(A)$ is $T_0$, so is $Spv(A,I)$. Hence by Theorem \ref{Hoch}, $Spv(A,I)$ is a spectral space and $\mathfrak{R}$ is a basis of open quasi-compact subsets of $Spv(A,I)$. By Lemma \ref{4.30}, $\mathfrak{R}$ is stable under
finite intersections and by Lemma \ref{4.31}, $r: Spv(A) \longrightarrow Spv(A,I)$ is a spectral map.
\end{proof}
A {\it topological monoid} is a monoid $A$ endowed with a topology such that the monoid operation
is continuous. Monoids endowed with the discrete and indiscrete topologies or the underlying multiplicative monoid of a topological ring are some simple examples of topological monoids. Some interesting examples are:
\begin{ex}
\emph{(1) $MSpec(A)$ is a topological monoid for any commutative monoid $A$ (see, \cite[$\S$ 2]{IP}).}\smallskip
\newline\emph{(2) Given any topological space $X$, we can construct a topological monoid $A := X \bigcup \{1,0\}$ whose underlying space is $X$ adjoined with two disjoint points 1 and 0. The point 1 represents the identity of the monoid and 0 represents the basepoint. The monoid operation is defined as : $a.b = 0$ for all $a,b \neq 1$ and $a.1=1.a =a $. We declare a set $U \subseteq A$ is open if and only if $U \cap X$ is open. It is easy to check that this gives a topology on $A$ and also makes it a topological monoid.}\smallskip
\newline\emph{(3) Given a topological group $(G,*)$ we can obtain a monoid $A:= G \cup \{0\}$ by adjoining a basepoint $0$ to $G$. The monoid operation is defined as : $a.b = a*b$ for all $a,b \neq 0$ and $a.0=0.a =0$. We declare a set $U \subseteq A$ is open if and only if $U \cap G$ is open. This makes $A$ a topological monoid.}\smallskip
\end{ex}
We now introduce the notion of $I$-topology on a monoid induced by an ideal $I$. This is inspired by the classical $I$-adic topology on a ring induced by an ideal $I$ of the ring (see, for instance, \cite[Chapter 10]{AM}).
\begin{defn}\label{I-top}
 Given an ideal $I$ of a monoid $A$, we can define a topology on the monoid by declaring
$U \subseteq A$ is open if either $0 \notin U$ or $I^n \subseteq U$ for some $n \in \mathbb{N}\cup\{0\}$ where $I^{0}:=A$. It can be easily checked that the monoid multiplication is continuous with respect to this topology and hence this gives a topological monoid. We call such a topology on a monoid defined by an ideal $I$ as {\it $I$-topology}. \end{defn}
If an ideal $I$ of a monoid $A$ is such that $\bigcap\limits_{n=0}^{\infty}I^n = \{0\}$, then it is easy to check that the $I$-topology is the same as the topology induced by the following metric:
\[
    d(a,b) = \left\{
                \begin{array}{ll}
                  \frac{1}{2^n}, \ \ \textnormal{for\ } a \neq b \textnormal{ and for the largest non-negative integer\ } n \textnormal{\ such that both\ } a,b \in I^n\\
                  \ 0, \ \ \textnormal{\ for \ } a=b
                \end{array}
              \right.
  \]
It may be easily checked that this gives a complete metric space!
\begin{defn}\label{Contdef}
Let $v$ be a valuation on a topological monoid $A$ with value group $\Gamma_v$. We call $v$ {\it continuous} if the map $v: A \longrightarrow \Gamma_v \cup \{0\}$ is continuous where $\Gamma_v \cup \{0\}$ is endowed with the following topology: $U \subseteq \Gamma_v \cup \{0\}$ is open if $0 \notin U$ or if there exists $\gamma \in \Gamma_v$ such that the set $\{v(x)\in \Gamma_v\ |\  v(x) < \gamma\} \subseteq U$. We denote by $Cont(A)$ the subspace of $Spv(A)$ consisting of the continuous valuations on $A$.
\end{defn}
We now discuss examples of continuous and non-continuous valuations.
\begin{ex}
\emph{For any abelian group $(G,*)$, consider the cancellative monoid $A:=G\cup \{0\}$ obtained by adjoining a basepoint $0$ to $G$. Explicitly, the monoid operation is defined as : $a.b = a*b$ for all $a,b \neq 0$ and $a.0=0.a =0$. Take any non-trivial ideal $I$ of $A$, say, for instance, $I=\langle g \rangle$ where $g$ is a non-identity element of the group $G$. Endow $A$ with the $I$-topology. Now, consider the following valuation $v:A\longrightarrow \{0,1\}$ of $A$ with the value group $\Gamma_v=\{1\}$: 
\[
    v(a) = \left\{
                \begin{array}{ll}
                  1, \ \ \text{when }a \neq 0\\
                  0, \ \ \text{when }a = 0
                \end{array}
              \right.
  \]
By Definition \ref{Contdef}, the topology on $\{0,1\}$ is discrete. But, $v^{-1}(\{0\})=\{0\}$ is not open in the $I$-topology of $A$. This is because $I^n\neq 0$ for all $n \in \mathbb{N}$, as $A$ is cancellative. Thus, this gives an example of non-continuous valuation. If we take $I=\{0\}$, then the $I$-topology on $A$ will make the valuation $v$ continuous. In fact, any valuation on a monoid having discrete topology is continuous.
}
\end{ex}
Let $A$ be a topological monoid. We call an element $x \in A$ {\it topologically nilpotent} if
$x^n \longrightarrow 0$ as $n \longrightarrow \infty$.
Note that for a topological monoid $A$ having the $I$-topology for some ideal $I$, $\sqrt{I}$ gives the complete collection of topologically nilpotent elements of $A$.\smallskip
\begin{Thm}\label{Contval}
Let $A$ be a monoid and $I$ be a finitely generated ideal of $A$. Endowing $A$ with the $I$-topology we obtain a topological monoid. Then,\smallskip
\newline$(i)$ For a valuation $v:A \longrightarrow \Gamma_v\cup\{0\}$, $$v \in Cont(A)\Longleftrightarrow \{x \in A\ |\ v(x)< \gamma\} \textnormal{ is open for all } \gamma \in \Gamma_v$$
$(ii)~Cont(A) = \{v \in Spv(A,I)\ |\ v(x) <1 \textnormal{ for all } x \in I\}.$\\
$(iii)~Cont(A)$ is a spectral space. 
\end{Thm}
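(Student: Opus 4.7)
The plan is to address parts (i), (ii), (iii) in order, with part (ii) carrying the main technical weight.

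Part (i) is an unpacking of definitions. I would note that in the topology on $\Gamma_v\cup\{0\}$ from Definition \ref{Contdef}, every subset of $\Gamma_v$ is open, and every open neighbourhood of $0$ must contain a set of the form $\{g\in\Gamma_v : g<\gamma\}$. Preimages of sets not containing $0$ avoid the basepoint of $A$ and hence are automatically open in the $I$-topology; continuity therefore reduces exactly to openness of $\{a\in A:v(a)<\gamma\}$ for every $\gamma\in\Gamma_v$, which is the statement.

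For the forward implication of (ii), I would use continuity together with (i) to deduce that $\{a\in A:v(a)<1\}$ is open in the $I$-topology. Since this set contains $0$, it must contain some $I^N$. Applying this to $x^N\in I^N$ for $x\in I$ gives $v(x)^N<1$ and hence $v(x)<1$. The same technique applied to $\{a:v(a)<\gamma\}$ and $s^N\in I^N$ for a generator $s$ of $I$ produces $v(s)^N<\gamma$, so $v(s)$ is cofinal in $\Gamma_v$ whenever $\Gamma_v\neq c\Gamma_v$; by Lemma \ref{4.29} this places $v$ in $Spv(A,I)$.

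For the reverse implication, which I expect to be the main obstacle, I would suppose $v\in Spv(A,I)$ with $v(x)<1$ for every $x\in I$ and show $\{v<\gamma\}\supseteq I^N$ for some $N$. The case $\gamma\geq 1$ is immediate from $I\subseteq\{v<1\}$. For $\gamma<1$, I first rule out the possibility that $\Gamma_v=c\Gamma_v$ and $v(I)\neq\{0\}$: picking $x_0\in I$ with $v(x_0)\neq 0$, membership in $c\Gamma_v$ yields some $a$ with $v(a)\geq 1$ and $v(ax_0)\geq 1$, contradicting $ax_0\in I$. So either $v(I)=\{0\}$ (making $I\subseteq\{v<\gamma\}$ automatic), or $\Gamma_v\neq c\Gamma_v$, in which case Lemma \ref{4.29} makes $v(s_i)$ cofinal in $\Gamma_v$ for each generator $s_i$. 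In this substantive subcase, the hypothesis $v(as_i)<1$ for all $a\in A$ and all $i$ yields the uniform bound $v(A)<M^{-1}$, where $M:=\max_i v(s_i)<1$. Writing a general $y\in I^N$ as $a\cdot s_{j_1}\cdots s_{j_N}$, one then gets $v(y)\leq v(a)M^N<M^{N-1}$, and cofinality of $M$ furnishes $N$ with $M^{N-1}<\gamma$. A secondary care-point is that some $v(s_i)$ may equal $0$, which contributes a trivial zero factor and is easily handled.

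For (iii), I would realize $Cont(A)$ as a patch-closed subspace of the spectral space $Spv(A,I)$ from Theorem \ref{4.32} and invoke the principle recalled in Section 2 that patch-closed subspaces of spectral spaces are themselves spectral. By (ii), $Cont(A)=\bigcap_{x\in I}\{v\in Spv(A,I):v(x)<1\}$. Each complement $\{v:v(x)\geq 1\}=\{v:v(1)\leq v(x)\neq 0\}$ lies in the basis $\mathfrak{R}$ of Lemma \ref{4.30} (since $I\subseteq\sqrt{\langle 1\rangle}=A$ trivially), hence is quasi-compact open and therefore clopen in the patch topology. The intersection defining $Cont(A)$ is thus patch-closed, completing the proof.
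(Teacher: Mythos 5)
Your proposal is correct in all three parts, and parts (i), the forward implication of (ii), and (iii) run essentially parallel to the paper's argument. The interesting divergence is in the reverse implication of (ii).

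The paper's reverse argument proceeds in two stages. First it establishes that $v(a)$ is cofinal in $\Gamma_v$ for every $a\in I$: when $c\Gamma_v\neq\Gamma_v$ it invokes Lemma \ref{4.29}, and when $c\Gamma_v=\Gamma_v$ it runs a separate argument using continuity of $x\mapsto tx$ together with openness of the preimage $\{x: tx\in I\}$ to produce an exponent $n$ with $I^n\subseteq\{x:tx\in I\}$. Second, it uses cofinality of $\delta=\max_i v(a_i)$ over the generators to push $I^{n+1}$ into $\{v<\gamma\}$. You instead observe, at the outset, that the case $\Gamma_v=c\Gamma_v$ with $v(I)\neq\{0\}$ cannot occur at all under the hypothesis $v(I)\subseteq\{<1\}$: if $v(x_0)\neq 0$ lies in $c\Gamma_v$ then the characterization of $c\Gamma_v$ produces $y$ with $v(y)\geq 1$ and $v(y)^{-1}\leq v(x_0)$, whence $v(yx_0)\geq 1$ despite $yx_0\in I$. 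This short-circuits the paper's multiplication-by-$t$ argument entirely, leaving only the trivial case $v(I)=\{0\}$ and the main case $\Gamma_v\neq c\Gamma_v$, in which Lemma \ref{4.29} hands you cofinality of the generators directly. Your subsequent uniform bound $v(A)<M^{-1}$ (obtained from $v(as_j)<1$ with $s_j$ achieving the maximum $M$) and the estimate $v(y)<M^{N-1}$ for $y\in I^N$ is the same closing computation the paper makes with $\delta$. So your route is a genuine, if modest, streamlining: it eliminates one of the two subarguments and the appeal to continuity of translation. In (iii), the paper phrases the conclusion as ``$Cont(A)$ is closed in $Spv(A,I)$, and closed subspaces of spectral spaces are spectral'' (citing Wedhorn), while you verify the slightly stronger patch-closedness by noting each $\{v:v(x)\geq 1\}$ lies in $\mathfrak R$ and is therefore quasi-compact open; both routes are standard and valid, and your patch-closure argument matches the technique recalled in Section 2 and used in Proposition \ref{vmsp}.
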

\begin{proof}
$(i)$ If $v \in Cont(A)$, then clearly $\{x \in A\ |\ v(x)< \gamma\}$ is open for all $\gamma \in \Gamma_v$. To show the other way, consider any open set $U$ of $\Gamma_v \cup \{0\}$. If $U$ is such that $0 \notin U$, then $0 \notin v^{-1}(U)$ and hence $v^{-1}(U)$ is open in $A$. If $U$ is such that there exists some $\gamma \in \Gamma_v$ for which $\{v(x)\in \Gamma_v\ |\  v(x) < \gamma\} \subseteq U$, then the open set $ \{x \in A\ |\ v(x)< \gamma\}$ is contained in $v^{-1}(U)$. Since $ \{x \in A\ |\ v(x)< \gamma\}$ is an open set of $A$ containing 0, there exists some $n \in \mathbb{N}$ such that $I^n \subseteq  \{x \in A\ |\ v(x)< \gamma\} \subseteq  v^{-1}(U)$. Hence, $v^{-1}(U)$ is open in $A$.\smallskip
\newline $(ii)$ Let $v \in Cont(A)$. Then, as we have just shown, $\{x \in A\ |\ v(x)< \gamma\}$ is an open neighborhood of 0 for all $\gamma \in \Gamma_v$. Since every element $a \in I$ is topologically nilpotent, for every $\gamma \in \Gamma$ there exists some $n \in \mathbb{N}$ such that $a^n \in \{x \in A\ |\ v(x)< \gamma\}$ i.e., $v(a)^n < \gamma$. Thus, $v(a)$ is cofinal in $\Gamma_v$ for every $a \in I$. This implies that $v(a) <1$ as $1 \in \Gamma_v$. By Lemma \ref{4.29}, we also have $\Gamma_v = c\Gamma_v(I)$. Hence, $v \in   \{v \in Spv(A,I)\ | \ v(x) <1 \textnormal{ \ for all \ } x \in I\}$.\\
Conversely, let $v \in Spv(A,I)$ with $v(a) <1$ for all $a \in I$. We want to show that $v(a)$ is cofinal in $\Gamma_v$ for all $a \in I$. If $c\Gamma_v \neq \Gamma_v$, we are done by Lemma \ref{4.29}. Now let $c\Gamma_v = \Gamma_v$ and let $\gamma \in \Gamma_v$ be any given element. For $\gamma \geq 1$, we are done by our hypothesis that $v(a) <1$. Otherwise, if $\gamma <1$, then by the definition of the characteristic subgroup there exists $t\in A$ such that $v(t) \neq 0$ and 
$v(t)^{-1}\leq \gamma <1 $. Note that the map 
$A \longrightarrow A $ defined by $x \mapsto tx$ is continuous since $A$ is a topological monoid. Thus the preimage of the open set $I$ under this map is open and is given by
$\{x \in A\ |\  tx \in I\}$. Since this is an open neighborhood of 0 in $A$, there exists some $n \in \mathbb{N}$ such that $I^n \subseteq \{x \in A\ | \ tx \in I\}$. For any $a \in I$, we therefore have $ta^n \in I$. Hence, by assumption $v(ta^n) < 1$ and so $v(a)^n < \gamma$. This proves our claim. \smallskip
\newline Let $\mathcal{S} =\{a_1,a_2, \hdots a_m\}$ be a finite set of generators of $I$. Set $\delta := max\{v(a_i)\}$. Then, there exists $n \in \mathbb{N}$ such that $\delta^n < \gamma$. Now, let $x$ be any element in $I^{n+1}$. Then, $x = aa_{x_1}\hdots a_{x_{n+1}}$ for some $a\in A$ and $a_{x_i} \in \mathcal{S}$ for $i=1,\hdots,n+1$. Hence, $v(x)=v(aa_{x_1}\hdots a_{x_{n+1}}) = v(aa_{x_1})v(a_{x_2})\hdots v(a_{x_{n+1}}) < v(aa_{x_1})\delta^n < \gamma$ since $aa_{x_1} \in I$. So by assumption $v(aa_{x_1})<1$. Thus, $I^{n+1} \subseteq \{x\in A\ |\ v(x)< \gamma\}$. This shows $\{x\in A\ | \ v(x)< \gamma\}$ is open for any $\gamma \in \Gamma_v$ and hence by $(i)$, $v$ is continuous. This completes the proof of $(ii)$.\smallskip
\newline $(iii)$ Note that $Spv(A,I)\setminus Cont(A) = \bigcup_{x \in I} \{v \in Spv(A,I)\ |\ 1\leq v(x)\}$ is open in $Spv(A,I)$ since each set on the right-hand side is open by Lemma \ref{4.30}. Thus, $Cont(A)$ is a closed subspace of $Spv(A,I)$. Recall that $Spv(A,I)$ is a spectral space by Theorem \ref{4.32}. Since any closed subspace of a spectral space is spectral \cite[Remark 3.13]{Wd}, we have that $Cont(A)$ is spectral.
\end{proof}

\small

\end{document}